\def\NAT@def@citea{\def\@citea{\NAT@separator}}
\theoremstyle{plain}
\newtheorem{theorem}{Theorem}[section]
\newtheorem{lemma}[theorem]{Lemma}
\newtheorem{corollary}[theorem]{Corollary}
\theoremstyle{definition}
\newtheorem{example}[theorem]{Example}
\theoremstyle{remark}
\newtheorem{remark}{Remark}
\begin{document}

\articletype{Article}

\title{On Approximation Properties of Generalized Lupa\c{s} Type Operators Based on
Polya Distribution with Pochhammer $k$-Symbol}

\author{
	\name{\"{O}vg\"{u} G\"{u}rel Y\i lmaz\textsuperscript{a}, Rabia Akta\c{s} \thanks{Corresponding author. Email:ovgu.gurelyilmaz@erdogan.edu.tr}\textsuperscript{b}, Fatma Ta\c{s}delen\textsuperscript{b}, and Ali Olgun \textsuperscript{c}}
	\affil{\textsuperscript{a}Recep Tayyip Erdogan University, Department of Mathematics, 53100 Rize,
		Turkey; \textsuperscript{b}Ankara University, Faculty of Science, Department of Mathematics, 06100, Tando\u{g}an, Ankara, Turkey; \textsuperscript{c}K\i r\i kkale University, Department of Mathematics, Yah\c{s}ihan
		71450, K\i r\i kkale, Turkey}
}

\maketitle

\begin{abstract}
In our present investigation, we are concerned with the Kantorovich variant of
Lupa\c{s}-Stancu operators based on Polya distribution with Pochhammer
$k$-symbol. We briefly give some basic properties of the generalized operators
and by making use of these results, we investigate convergence properties of
the studied operators. Furthermore, the rate of convergence of these operators
is obtained and Voronovskaja type theorem for the pointwise approximation is
established. Then we construct bivariate generalization of the operators and
we discuss some convergence properties. Finally, taking into account some
illustrative graphics, we conclude our study with the comparison of the rate
of convergence between our operators and other operators which are mentioned
in the paper.
\end{abstract}

\begin{keywords}
Bersntein operators; Stancu operators; Lupa\c{s} operators; Kantorovich operators; Polya distribution; modulus of continuity; Lipschitz class; Voronovskaja type theorem; Pochhammer $k$-symbol
\end{keywords}

\section{Introduction}
In the field of approximation theory, Bernstein operators have a considerable
importance since it is the major key for the proof of the Weierstrass
approximation theorem. In 1912, Bernstein \cite{Bernstein} presented the
well-known Bernstein operators of order $n\in%
\mathbb{N}
$ in the following form
\begin{equation}
B_{n}\left(  f;x\right)  =\sum_{m=0}^{n}\left(
\begin{array}
[c]{c}%
n\\
m
\end{array}
\right)  x^{m}\left(  1-x\right)  ^{n-m}f\left(  \frac{m}{n}\right)  ,
\label{Bernstein}%
\end{equation}
where $f\in C\left[  0,1\right]  \  \left(  \text{real valued continuous
function on }\left[  0,1\right]  \right)  $. The fact that they are
functional in studying many problems, convenient in computer-aided studies and
also they have a simple representation, important generalizations
and applications has motivated a great number of authors to study intensively up to now. We refer the
readers \cite{Acar, Acu, AMS, Cetin, MO, Opris, Ost} for some studies about
these operators.

In the year 1968, Stancu \cite{Stancu} introduced the operators $P_{n}%
^{\left \langle \alpha \right \rangle }:C\left[  0,1\right]  \rightarrow C\left[
0,1\right]  \ $with a nonnegative parameter $\alpha,$%
\begin{equation}
P_{n}^{\left \langle \alpha \right \rangle }\left(  f;x\right)  =\sum_{m=0}%
^{n}p_{n,m}^{\left \langle \alpha \right \rangle }\left(  x\right)  f\left(
\frac{m}{n}\right)  \label{Stancu}%
\end{equation}
where $p_{n,m}^{\left \langle \alpha \right \rangle }$ is defined by%
\begin{equation}
p_{n,m}^{\left \langle \alpha \right \rangle }\left(  x\right)  =\left(
\begin{array}
[c]{c}%
n\\
m
\end{array}
\right)  \frac{%
{\displaystyle \prod \limits_{\nu=0}^{m-1}}
\left(  x+\nu \alpha \right)
{\displaystyle \prod \limits_{\mu=0}^{n-m-1}}
\left(  1-x+\mu \alpha \right)  }{\left(  1+\alpha \right)  \left(
1+2\alpha \right)  ...\left(  1+\left(  n-1\right)  \alpha \right)  }
\label{Polya1}%
\end{equation}
for $n\in%
\mathbb{N}
.\ $It should be noted that when $\alpha=0,\  \left(  \text{\ref{Stancu}%
}\right)  $ obviously reduces to the classical Bernstein operators given by
$\left(  \text{\ref{Bernstein}}\right)  $.

After Stancu's paper, by taking into account the special choice $\alpha
=\frac{1}{n}$ in $\left(  \text{\ref{Stancu}}\right)  $ in 1987 Lupa\c{s} and
Lupa\c{s} \cite{Lupas} constructed the operators $P_{n}^{\left \langle \frac{1}%
{n}\right \rangle }:C\left[  0,1\right]  \rightarrow C\left[  0,1\right]  \ $as
follows
\begin{equation}
P_{n}^{\left \langle \frac{1}{n}\right \rangle }\left(  f;x\right)  =\frac
{2n!}{\left(  2n\right)  !}\sum_{m=0}^{n}\left(
\begin{array}
[c]{c}%
n\\
m
\end{array}
\right)  \left(  nx\right)  _{m}\left(  n-nx\right)  _{n-m}f\left(  \frac
{m}{n}\right)  , \label{Polya2}%
\end{equation}
where $\left(  s\right)  _{m}\ $is a rising factorial also known as the
Pochhammer symbol namely,$\ $%
\begin{equation}
\left(  s\right)  _{m}=\left \{
\begin{array}
[c]{c}%
s\left(  s+1\right)  \left(  s+2\right)  ...\left(  s+m-1\right)  \ for\ m\in%
\mathbb{N}%
\\
\  \  \  \  \  \  \  \  \  \  \  \  \  \  \  \  \  \  \  \  \  \  \ 1\  \  \  \  \  \  \  \  \  \  \  \  \  \  \  \  \  \  \  \  \ for\ m=0,~s\neq
0
\end{array}
\right.  \label{Poch}%
\end{equation}
where $s$ is a real or complex number. In 2012, Miclaus \cite{Miclaus}
reconsidered the operators $\left(  \text{\ref{Polya2}}\right)  $ and in this
work, some of the properties of the operators such as moments, the remainder
term and the monotocity properties, were recalculated with a different
technique and also asymptotic behaviour of the $\left(  \text{\ref{Polya2}%
}\right)  $ was discussed. Up to now, many operators which are based on Polya
distribution have been extensively studied. We refer the reader to the
articles \cite{Agrawal, Agrawal1, Agrawal2, Cardenas, Deo, Gupta, Kajla, Neer,
Razi-Tez} and the references therein.

In 1989, Razi \cite{Razi} defined\textbf{\ }the following Kantorovich
modification of the Bernstein-Stancu operators $P_{n}^{\left \langle
\alpha \right \rangle }\left(  f;x\right)  $ given by $\left(
\text{\ref{Stancu}}\right)  $%
\[
K_{n}^{\left(  \alpha \right)  }\left(  f;x\right)  =\left(  n+1\right)
\sum_{m=0}^{n}p_{n,m}^{\left \langle \alpha \right \rangle }\left(  x\right)
\int \limits_{\frac{m}{n+1}}^{\frac{m+1}{n+1}}f\left(  t\right)  dt.
\]
and studied some approximation properties. For $\alpha=0,$ it takes the
classical Bernstein Kantorovich operators%
\begin{equation}
K_{n}\left(  f;x\right)  =\left(  n+1\right)  \sum_{m=0}^{n}\left(
\begin{array}
[c]{c}%
n\\
m
\end{array}
\right)  x^{m}\left(  1-x\right)  ^{n-m}\int \limits_{\frac{m}{n+1}}%
^{\frac{m+1}{n+1}}f\left(  t\right)  dt. \label{*}%
\end{equation}

In 2016, for $\alpha=\frac{1}{n},$ the Kantorovich modification of Lupa\c{s}
operators based on Polya distribution $P_{n}^{\left \langle \frac{1}%
{n}\right \rangle }\left(  f;x\right)  $%
\begin{equation}
D_{n}^{\ast \left(  \frac{1}{n}\right)  }\left(  f;x\right)  =\left(
n+1\right)  \frac{2n!}{\left(  2n\right)  !}\sum_{m=0}^{n}\left(
\begin{array}
[c]{c}%
n\\
m
\end{array}
\right)  \left(  nx\right)  _{m}\left(  n-nx\right)  _{n-m}\int \limits_{\frac
{m}{n+1}}^{\frac{m+1}{n+1}}f\left(  t\right)  dt~~~,~~f\in C\left[
0,1\right]  \label{Kant1}%
\end{equation}
was studied by Agrawal et al. \cite{AIK} and, local and global approximation
properties were obtained. Furthermore, the authors introduced bivariate form
of the Kantorovich modification of Lupa\c{s} operators based on Polya
distribution defined by (\ref{Kant1}) as follows%
\begin{equation}
D_{n_{1},n_{2}}^{\ast \left(  \frac{1}{n_{1}},\frac{1}{n_{2}}\right)  }\left(
f;x,y\right)  =\left(  n_{1}+1\right)  \left(  n_{2}+1\right)  \sum_{m_{1}%
=0}^{n_{1}}\sum_{m_{2}=0}^{n_{2}}p_{n_{1},n_{2},m_{1},m_{2}}^{\left(
1/n_{1},1/n_{2}\right)  }\left(  x,y\right)  \int \limits_{\frac{m_{1}}%
{n_{1}+1}}^{\frac{m_{1}+1}{n_{1}+1}}\int \limits_{\frac{m_{2}}{n_{2}+1}}%
^{\frac{m_{2}+1}{n_{2}+1}}f\left(  t,s\right)  dtds \label{bivakant}%
\end{equation}
for $f:C\left(  J^{2}\right)  \rightarrow C\left(  J^{2}\right)  ,~J=\left[
0,1\right]  $ where
\begin{align*}
p_{n_{1},n_{2},m_{1},m_{2}}^{\left(  1/n_{1},1/n_{2}\right)  }\left(
x,y\right)   &  =\frac{2n_{1}!}{\left(  2n_{1}\right)  !}\frac{2n_{2}%
!}{\left(  2n_{2}\right)  !}\left(
\begin{array}
[c]{c}%
n_{1}\\
m_{1}%
\end{array}
\right)  \left(
\begin{array}
[c]{c}%
n_{2}\\
m_{2}%
\end{array}
\right) \\
&  \times \left(  n_{1}x\right)  _{m_{1}}\left(  n_{1}-n_{1}x\right)
_{n_{1}-m_{1}}\left(  n_{2}y\right)  _{m_{2}}\left(  n_{2}-n_{2}y\right)
_{n_{2}-m_{2}}%
\end{align*}
and they gave some rates of convergence for these operators.

In 2010, Gadjiev and Ghorbanalizadeh \cite{GG} defined a new construction of
Bernstein--Stancu type polynomials as%
\begin{equation}
B_{n,\alpha,\beta}\left(  h;y\right)  =\left(  \frac{n+\beta_{2}}{n}\right)
^{n}\sum_{m=0}^{n}\left(
\begin{array}
[c]{c}%
n\\
m
\end{array}
\right)  \left(  y-\frac{\alpha_{2}}{n+\beta_{2}}\right)  ^{m}\left(
\frac{n+\alpha_{2}}{n+\beta_{2}}-y\right)  ^{n-m}h\left(  \frac{m+\alpha_{1}%
}{n+\beta1}\right)  \label{1}%
\end{equation}
where $\frac{\alpha_{2}}{n+\beta_{2}}\leq y\leq \frac{n+\alpha_{2}}{n+\beta
_{2}},$ $\alpha_{1},\beta_{1},\alpha_{2},\beta_{2}$ are positive real number
and $0\leq \alpha_{1}\leq \alpha_{2}\leq \beta_{1}\leq \beta_{2}.$

In 2020, inspired by the operator (\ref{1}) Rahman et al. \cite{RMK}
constructed a Kantorovich type Lupa\c{s}--Stancu operators based on P\'{o}lya
distribution as follows:
\begin{equation}
S_{n,\frac{1}{n}}^{\left(  \alpha,\beta \right)  }\left(  h;y\right)  =\left(
n+\beta_{1}+1\right)  \sum_{m=0}^{n}s_{n,m}^{\left(  \alpha_{2},\beta
_{2}\right)  }\left(  y\right)  \int \limits_{\frac{m+\alpha_{1}}{n+\beta
_{1}+1}}^{\frac{m+\alpha_{1}+1}{n+\beta_{1}+1}}h\left(  u\right)  du
\label{Kant2}%
\end{equation}
where
\begin{align*}
s_{n,m}^{\left(  \alpha_{2},\beta_{2}\right)  }\left(  y\right)   &
=\frac{\left(
\begin{array}
[c]{c}%
n\\
m
\end{array}
\right)  \left(  y-\frac{\alpha_{2}}{n+\beta_{2}}\right)  _{m,\frac{1}{n}%
}\left(  \frac{n+\alpha_{2}}{n+\beta_{2}}-y\right)  _{n-m,\frac{1}{n}}%
}{\left(  \frac{n}{n+\beta_{2}}\right)  _{n,\frac{1}{n}}},\\
\left(  y\right)  _{m,\frac{1}{n}}  &  =y\left(  y+\frac{1}{n}\right)  \left(
y+\frac{2}{n}\right)  ...\left(  y+\frac{m-1}{n}\right)
\end{align*}
and $y\in \left[  \frac{\alpha_{2}}{n+\beta_{2}},\frac{n+\alpha_{2}}%
{n+\beta_{2}}\right]  $ and $\alpha_{1},\beta_{1},\alpha_{2},\beta_{2}$ are
positive real number, and $0\leq \alpha_{1}\leq \alpha_{2}\leq \beta_{1}\leq
\beta_{2}.$ It is clear that for $\alpha_{1}=\beta_{1}=\alpha_{2}=\beta_{2}=0$
it reduces to operator $\left(  \text{\ref{Kant1}}\right)  .$ For $\alpha
_{2}=\beta_{2}=0,$ it gives the Kantorovich-Stancu generalization of the
operators $D_{n}^{\ast \left(  \frac{1}{n}\right)  }.~$Additionally, Rahman et
al. \cite{RMK} defined a bivariate generalization of the Kantorovich type
Lupa\c{s}--Stancu operators given by (\ref{Kant2}) as follows%
\begin{align}
S_{n_{1},n_{2},\frac{1}{n_{1}},\frac{1}{n_{2}}}^{\left(  \alpha,\beta \right)
}\left(  h;y,z\right)   &  =\left(  n_{1}+\beta_{1}+1\right)  \left(
n_{2}+\beta_{1}+1\right)  \sum_{m_{1}=0}^{n_{1}}\sum_{m_{2}=0}^{n_{2}}%
s_{n_{1},n_{2},m_{1},m_{2}}^{\left(  \alpha_{2},\beta_{2}\right)  }\left(
y,z\right) \nonumber \\
&  \times \int \limits_{\frac{m_{1}+\alpha_{1}}{n_{1}+\beta_{1}+1}}^{\frac
{m_{1}+\alpha_{1}+1}{n_{1}+\beta_{1}+1}}\int \limits_{\frac{m_{2}+\alpha_{1}%
}{n_{2}+\beta_{1}+1}}^{\frac{m_{2}+\alpha_{1}+1}{n_{2}+\beta_{1}+1}}h\left(
u,s\right)  du~ds. \label{biva2}%
\end{align}

In 2007, the notion of Pochhammer $k$-symbol was first proposed by Diaz and
Pariguan in the work \cite{Diaz}. For $\lambda \in%
\mathbb{C}
,\ $it is defined by\
\begin{equation}
\left(  \lambda \right)  _{m,k}=\left \{
\begin{array}
[c]{c}%
\lambda \left(  \lambda+k\right)  \left(  \lambda+2k\right)  ...\left(
\lambda+\left(  m-1\right)  k\right)  \ ;\  \  \ m\geq
1\  \  \  \  \  \  \  \  \  \  \  \  \  \  \  \  \  \  \\
\  \  \  \  \  \  \  \  \  \  \  \  \  \  \  \  \  \  \ 1\  \  \  \  \  \  \  \  \  \  \  \  \  \  \ ;\  \ m=0,\  \lambda
\neq0
\end{array}
\right.  \label{Pock}%
\end{equation}
where it is assumed that $m\in%
\mathbb{N}
,\ k$ nonnegative real number$.\ $It is easy to say that by taking $k=1,$ the
definition of Pochhammer $k$-symbol coincides with the usual Pochhammer symbol
which is given by $\left(  \text{\ref{Poch}}\right)  .$This investigation has
revealed many new generalizations with it, such as $k$-Gamma function,
$k$-Beta function, $k$-Zeta function, $k$ generalization of hypergeometric
function and so on. For more detail see \cite{Diaz, Li, Kokolo, Krasniqi,
Mubeen, Mubeen1}.

Our present study is motivated essentially by the theory of Polya distribution
of the operators, considered by Lupa\c{s} and Lupa\c{s} \cite{Lupas}, presented in
\cite{Miclaus} and the Pochhammer $k$-symbol given by \cite{Diaz}. Here, we
define some slight modifications of Polya distribution for the special case
$\alpha=\frac{k}{n},~k$ nonnegative real number$,$ applying the notion of
Pochhammer $k$-symbol in the definition of Polya distribution.\textbf{\ }Our
main objective of this article is to investigate such a generalization how
affects the rate of convergence of the operators. The structure of the paper
reads as follows. In section 2, we first consider Lupa\c{s} type
generalization $P_{n,k}^{\left \langle \frac{k}{n}\right \rangle }\left(
f;x\right)  $ for the case $\alpha=\frac{k}{n},~k$ nonnegative real number, of
the Bernstein-Stancu operators $P_{n}^{\left \langle \alpha \right \rangle
}\left(  f;x\right)  .$ We compare the operators $P_{n,k}^{\left \langle
\frac{k}{n}\right \rangle }\left(  f;x\right)  $ and the operators
$P_{n}^{\left \langle \frac{1}{n}\right \rangle }\left(  f;x\right)  $ with some
graphics. We show that for $0\leq k<1$\ the operators $P_{n,k}^{\left \langle
\frac{k}{n}\right \rangle }\left(  f;x\right)  $ give a better approximation
than $P_{n,k}^{\left \langle \frac{1}{n}\right \rangle }\left(  f;x\right)  $.
Then we introduce Kantorovich-Stancu modification of the Lupa\c{s}
type operator $P_{n,k}^{\left \langle \frac{k}{n}\right \rangle }$ with
Pochhammer $k$-symbol and present some fundamental results such as moments,
central moments. Convergence properties of the new operators are examined.
More precisely, we give the theorem about the uniform convergence, estimate
the rate of the convergence by means of the classical modulus of continuity
and discuss pointwise approximation via Voronovskaja type theorem.
Furthermore, in section 3 we also consider a bivariate generalization of
Kantorovich-Stancu modification of the Lupa\c{s} type operator $P_{n,k}%
^{\left \langle \frac{k}{n}\right \rangle }$.We give some preliminary results
for the bivariate\textbf{ }Lupa\c{s}-Kantorovich-Stancu type operators and
discuss some approximation properties. Finally, taking into account some
illustrative graphics, we conclude our study with the comparison of the rate
of convergence between our operators and other operators which are mentioned
in the paper for the values of the parameter $k$.

\section{Lupa\c{s} type operators by means of Pochhammer $k$-symbol}
Before we start this section, we briefly describe the Lupa\c{s} type operators
that play an important role in this paper. Let $C\left[  0,1\right]  $ be the
space of all real valued continuous functions on $\left[  0,1\right]
\ $endowed with the norm%
\[
\left \Vert f\right \Vert _{C\left[  0,1\right]  }=\sup_{x\in \left[  0,1\right]
}\left \vert f\left(  x\right)  \right \vert .
\]
Let $n\in%
\mathbb{N}
$ and $k$ nonnegative real number$.$ By taking into account the special choice
$\alpha=\frac{k}{n}$ in $\left(  \text{\ref{Stancu}}\right)  $ in view of the
notion of Pochhammer $k$-symbol, the operators $P_{n,k}^{\left \langle \frac
{k}{n}\right \rangle }:C\left[  0,1\right]  \rightarrow C\left[  0,1\right]  $
is defined by%

\begin{equation}
P_{n,k}^{\left \langle \frac{k}{n}\right \rangle }\left(  f;x\right)  =\frac
{1}{\left(  n\right)  _{n,k}}\sum_{m=0}^{n}\left(
\begin{array}
[c]{c}%
n\\
m
\end{array}
\right)  \left(  nx\right)  _{m,k}\left(  n-nx\right)  _{n-m,k}f\left(
\frac{m}{n}\right)  , \label{PolBer}%
\end{equation}
where $\left(  \lambda \right)  _{m,k}$ is a Pochhammer $k$-symbol given by
$\left(  \text{\ref{Pock}}\right)  .$ The case $k=0$ turns to the classical
Bernstein operators. For $k=1,$ it gives Lupa\c{s} operators given by
(\ref{Polya2}).

Now, we begin our study by giving moments, central moments and rate of
convergence for the operators\textbf{\ }$\left(  \text{\ref{PolBer}}\right)
.$ We first give Lemma \ref{test} , Lemma \ref{k} and Corollary \ref{j} without proof, which follows
from the results given in the paper \cite{Miclaus} for $\alpha=\frac{k}{n}$,
$k\geq0$.

Throughout this paper, $%
\mathbb{N}
$ denotes the set of positive integers and $%
\mathbb{N}
_{0}=%
\mathbb{N}
\cup \left \{  0\right \}  $ and let us denote the monomials $e_{j}\left(
t\right)  =t^{j}$ for $j\in%
\mathbb{N}
_{0}.$

\begin{lemma}
\label{test}Let $n\in%
\mathbb{N}
$ and$\ k$ be nonnegative real number$.$ Then for the operators $P_{n,k}%
^{\left \langle \frac{k}{n}\right \rangle }$ defined by $\left(
\text{\ref{PolBer}}\right)  $, we have%
\begin{align}
P_{n,k}^{\left \langle \frac{k}{n}\right \rangle }\left(  e_{0};x\right)   &
=1,\label{test1}\\
P_{n,k}^{\left \langle \frac{k}{n}\right \rangle }\left(  e_{1};x\right)   &
=x,\label{test2}\\
P_{n,k}^{\left \langle \frac{k}{n}\right \rangle }\left(  e_{2};x\right)   &
=x^{2}+\frac{\left(  k+1\right)  x\left(  1-x\right)  }{n+k},\label{test3}\\
P_{n,k}^{\left \langle \frac{k}{n}\right \rangle }\left(  e_{3};x\right)   &
=x^{3}+\frac{\left(  3n+2k-2\right)  \left(  k+1\right)  x^{2}\left(
1-x\right)  }{\left(  n+k\right)  \left(  n+2k\right)  }\nonumber \\
&  +\frac{\left(  2k+1\right)  \left(  k+1\right)  x\left(  1-x\right)
}{\left(  n+k\right)  \left(  n+2k\right)  },\label{test4}
\end{align}
\begin{align}
P_{n,k}^{\left \langle \frac{k}{n}\right \rangle }\left(  e_{4};x\right)   &
=x^{4}+\frac{\left(  k+1\right)  \left(  \left(  11n-6\right)  \left(
k-1\right)  +6\left(  n^{2}+k^{2}\right)  \right)  x^{3}\left(  1-x\right)
}{\left(  n+k\right)  \left(  n+2k\right)  \left(  n+3k\right)  }\nonumber \\
&  +\frac{\left(  k+1\right)  \left(  7n+11nk+6\left(  k^{2}-k-1\right)
\right)  x^{2}\left(  1-x\right)  }{\left(  n+k\right)  \left(  n+2k\right)
\left(  n+3k\right)  }\nonumber \\
&  +\frac{\left(  k+1\right)  \left(  n-k+6nk\left(  k+1\right)  \right)
x\left(  1-x\right)  }{n\left(  n+k\right)  \left(  n+2k\right)  \left(
n+3k\right)  }. \label{test5}%
\end{align}

\end{lemma}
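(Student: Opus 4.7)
The plan is to reduce each moment computation to a Chu--Vandermonde-type identity for the Pochhammer $k$-symbol, namely
\begin{equation}
\sum_{m=0}^{n}\binom{n}{m}(a)_{m,k}\,(b)_{n-m,k}=(a+b)_{n,k}, \label{kvand}
\end{equation}
which for $k>0$ follows from the scaling $(\lambda)_{m,k}=k^{m}(\lambda/k)_{m}$ together with the classical Vandermonde identity $\sum_{m}\binom{n}{m}(A)_{m}(B)_{n-m}=(A+B)_{n}$, and which extends to $k=0$ by continuity (or can be proved directly by induction on $n$). Setting $a=nx$, $b=n-nx$ in \eqref{kvand} yields \eqref{test1} at once.

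For the first moment I would combine $m\binom{n}{m}=n\binom{n-1}{m-1}$ with the factorisation $(nx)_{m,k}=nx\,(nx+k)_{m-1,k}$, reindex $m\mapsto m-1$, and apply \eqref{kvand} with $a=nx+k$, $b=n-nx$; together with the identity $(n)_{n,k}=n\,(n+k)_{n-1,k}$ the expression collapses to $x$. For the higher moments \eqref{test3}--\eqref{test5} I would expand $m^{j}$ into falling factorials via Stirling numbers of the second kind, $m^{j}=\sum_{i=0}^{j}S(j,i)\,m(m-1)\cdots(m-i+1)$, and iterate the same shift: one checks
\[
m(m-1)\cdots(m-i+1)\binom{n}{m}(nx)_{m,k}=n(n-1)\cdots(n-i+1)\,(nx)_{i,k}\binom{n-i}{m-i}(nx+ik)_{m-i,k},
\]
so that \eqref{kvand} applied to the inner sum over $m-i$ produces the closed form
\[
P_{n,k}^{\langle k/n\rangle}\bigl(m(m-1)\cdots(m-i+1);x\bigr)=\frac{n(n-1)\cdots(n-i+1)\,(nx)_{i,k}}{(n)_{i,k}},
\]
after using the telescoping $(n)_{n,k}=(n)_{i,k}(n+ik)_{n-i,k}$.

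What is left is algebraic bookkeeping: dividing by $n^{j}$, summing weighted by $S(j,i)$, expanding $(nx)_{i,k}=\prod_{\nu=0}^{i-1}(nx+\nu k)$ and $(n)_{i,k}=\prod_{\nu=0}^{i-1}(n+\nu k)$ for $i=2,3,4$, and regrouping around the pivotal factor $x(1-x)$ so as to match the stated closed forms. This collecting of terms is the main obstacle in practice, but it is mechanical and polynomial in $x$; indeed, since $P_{n,k}^{\langle k/n\rangle}$ is precisely the Stancu operator $P_{n}^{\langle\alpha\rangle}$ of \eqref{Polya1} at $\alpha=k/n$ (as one checks from the scaling in the first paragraph), an alternative route that bypasses the bookkeeping is to invoke the corresponding moment formulas of Miclaus \cite{Miclaus} with this specialization of $\alpha$, which is precisely what the authors appeal to.
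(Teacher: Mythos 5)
Your argument is correct, but it takes a genuinely different route from the paper's: the authors give no proof of Lemma \ref{test} at all. They state explicitly that it ``follows from the results given in \cite{Miclaus} for $\alpha=\frac{k}{n}$, $k\geq 0$,'' i.e.\ they take exactly the shortcut you mention in your closing sentence: since each factor $nx+\nu k$, $n-nx+\mu k$, $n+\nu k$ is $n$ times the corresponding factor $x+\nu\frac{k}{n}$, $1-x+\mu\frac{k}{n}$, $1+\nu\frac{k}{n}$, the powers of $n$ cancel and $P_{n,k}^{\left\langle \frac{k}{n}\right\rangle}$ is literally the Stancu operator $P_{n}^{\left\langle \alpha\right\rangle}$ of \eqref{Polya1} at $\alpha=\frac{k}{n}$, so Miclaus's moment formulas can simply be specialized. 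What you supply in addition is a self-contained derivation that the paper omits: the Vandermonde convolution for the Pochhammer $k$-symbol, $\sum_{m=0}^{n}\binom{n}{m}(a)_{m,k}(b)_{n-m,k}=(a+b)_{n,k}$ (correctly reduced to the classical rising-factorial Vandermonde identity via $(\lambda)_{m,k}=k^{m}(\lambda/k)_{m}$ for $k>0$, and to the binomial theorem for $k=0$), combined with the shift identities $m(m-1)\cdots(m-i+1)\binom{n}{m}=n(n-1)\cdots(n-i+1)\binom{n-i}{m-i}$ and $(nx)_{m,k}=(nx)_{i,k}(nx+ik)_{m-i,k}$, which yield the clean factorial-moment formula $P_{n,k}^{\left\langle \frac{k}{n}\right\rangle}\bigl(m(m-1)\cdots(m-i+1);x\bigr)=n(n-1)\cdots(n-i+1)\,(nx)_{i,k}/(n)_{i,k}$; from this all five identities \eqref{test1}--\eqref{test5} follow by Stirling-number expansion and routine algebra (I checked \eqref{test1}--\eqref{test3} against your formula and they agree). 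The paper's route buys brevity at the cost of delegating everything to a citation; yours buys a verifiable, reusable intermediate formula, with the only cost being the mechanical bookkeeping for $e_{3}$ and $e_{4}$, which you rightly flag as routine.
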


\begin{corollary}
\label{j} Let $n\in%
\mathbb{N}
$ and $k$ be nonnegative real number. Then the central moments of the
operators $P_{n,k}^{\left \langle \frac{k}{n}\right \rangle }$ are given by%
\begin{align}
P_{n,k}^{\left \langle \frac{k}{n}\right \rangle }\left(  \left(  e_{1}%
-x\right)  ^{2};x\right)   &  =\frac{\left(  k+1\right)  x\left(  1-x\right)
}{n+k},\label{cent1}\\
P_{n,k}^{\left \langle \frac{k}{n}\right \rangle }\left(  \left(  e_{1}%
-x\right)  ^{3};x\right)   &  =\frac{\left(  k+1\right)  \left(  2k+1\right)
x\left(  1-x\right)  \left(  1-2x\right)  }{\left(  n+k\right)  \left(
n+2k\right)  },\nonumber
\end{align}%
\begin{align}
P_{n,k}^{\left \langle \frac{k}{n}\right \rangle }\left(  \left(  e_{1}%
-x\right)  ^{4};x\right)   &  =\frac{\left(  k+1\right)  3n\left(
-2+n+k\left(  -6-6k+n\right)  \right)  \left(  x\left(  1-x\right)  \right)
^{2}}{n\left(  n+k\right)  \left(  n+2k\right)  \left(  n+3k\right)
}\nonumber \\
&  +\frac{\left(  k+1\right)  \left(  n+k\left(  -1+6\left(  k+1\right)
n\right)  \right)  x\left(  1-x\right)  }{n\left(  n+k\right)  \left(
n+2k\right)  \left(  n+3k\right)  }. \label{cent3}%
\end{align}

\end{corollary}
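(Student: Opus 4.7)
The plan is to derive Corollary \ref{j} as a direct algebraic consequence of Lemma \ref{test}, using only the linearity of $P_{n,k}^{\left\langle \frac{k}{n}\right\rangle}$ and the binomial expansion of $(e_1-x)^j$. Since the operator is linear in its first argument, for each $j \in \{2,3,4\}$ we can write
\begin{equation*}
P_{n,k}^{\left\langle \frac{k}{n}\right\rangle}\!\left((e_1-x)^j;x\right) = \sum_{i=0}^{j}\binom{j}{i}(-x)^{j-i} P_{n,k}^{\left\langle \frac{k}{n}\right\rangle}(e_i;x),
\end{equation*}
so the whole task reduces to substituting the closed-form expressions (\ref{test1})--(\ref{test5}) and simplifying.

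Concretely, I would first handle the second central moment, which is the cleanest: expanding $(e_1-x)^2 = e_2 - 2x e_1 + x^2 e_0$, plugging in (\ref{test1})--(\ref{test3}), and noting the cancellation $-2x\cdot x + x^2 = -x^2$, leaves exactly $\frac{(k+1)x(1-x)}{n+k}$. Next, for the third central moment I would expand $(e_1-x)^3 = e_3 - 3x e_2 + 3x^2 e_1 - x^3 e_0$, use (\ref{test1})--(\ref{test4}), and observe that the $x^3$-type terms cancel; the remaining contribution is the $x(1-x)$ factor coming from (\ref{test3}) and (\ref{test4}), which after factoring reshapes into $\frac{(k+1)(2k+1)x(1-x)(1-2x)}{(n+k)(n+2k)}$. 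The key is to pull the common $(k+1)x(1-x)$ out and recognize the factor $(1-2x)$ appearing from the combination of the $x^2(1-x)$ and $x(1-x)$ terms in (\ref{test4}) balanced against $-3x \cdot \frac{(k+1)x(1-x)}{n+k}$.

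For the fourth central moment, I would apply the same recipe with $(e_1-x)^4 = e_4 - 4x e_3 + 6x^2 e_2 - 4x^3 e_1 + x^4 e_0$, substituting (\ref{test5}) together with (\ref{test1})--(\ref{test4}). After the $x^4$-pieces cancel, every surviving term carries a factor $x(1-x)$, so I would factor out $(k+1)x(1-x)/[(n+k)(n+2k)(n+3k)]$ and then split the residue into the $x(1-x)$-part and the $(x(1-x))^2$-part, matching exactly the two displayed fractions in (\ref{cent3}).

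The main obstacle is purely bookkeeping in the fourth-moment simplification: keeping track of the common denominators $(n+k)(n+2k)(n+3k)$ (and the stray $1/n$ in the last term of (\ref{test5})), grouping like powers of $x$, and verifying that the polynomial coefficients in $n$ and $k$ collapse to $3n(-2+n+k(-6-6k+n))$ and $n+k(-1+6(k+1)n)$ as stated. No deeper idea is required beyond linearity and Lemma \ref{test}; this is exactly why the authors state the corollary without proof, citing that it follows from the moment identities specialized to $\alpha = k/n$.
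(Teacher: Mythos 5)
Your proposal is correct and is precisely the argument the paper has in mind: the authors state Corollary \ref{j} without proof, noting only that it follows from the moments in Lemma \ref{test} (specialized from \cite{Miclaus} to $\alpha=\tfrac{k}{n}$), and your binomial expansion of $(e_1-x)^j$ combined with linearity is exactly that derivation. The cancellations you describe do go through (e.g.\ for $j=3$ the bracket collapses to $(2k+1)(1-2x)$ after factoring out $\tfrac{(k+1)x(1-x)}{(n+k)(n+2k)}$), so nothing is missing.
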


\begin{lemma}
 \label{k} Let $n\in%
\mathbb{N}
$ and$\ k$ be nonnegative real number$.\ $Then for every $f\in C\left[
0,1\right]  ,$ we have%
\[
\lim_{n\rightarrow \infty}P_{n,k}^{\left \langle \frac{k}{n}\right \rangle
}\left(  f;x\right)  =f\left(  x\right)
\]
uniformly in $\left[  0,1\right]  .$
\end{lemma}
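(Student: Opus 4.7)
The statement is a standard Korovkin-type convergence result, and the natural approach is to invoke the Bohman--Korovkin theorem on $C[0,1]$. The plan is therefore to verify the three Korovkin test conditions on $\{e_{0},e_{1},e_{2}\}$ and then to argue that the operators $P_{n,k}^{\langle k/n\rangle}$ form a sequence of positive linear operators to which the theorem applies.

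First I would note positivity and linearity: for $x\in[0,1]$ and $n\in\mathbb{N}$, the binomial coefficients together with the Pochhammer $k$-symbols $(nx)_{m,k}$ and $(n-nx)_{n-m,k}$ are all nonnegative (since $nx\geq 0$, $n-nx\geq 0$ and $k\geq 0$), and the normalizing factor $1/(n)_{n,k}$ is positive. So $P_{n,k}^{\langle k/n\rangle}$ is a positive linear operator on $C[0,1]$ for every admissible $n,k$.

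Next I would read off the test function values directly from Lemma \ref{test}: equations \eqref{test1} and \eqref{test2} give
\[
P_{n,k}^{\langle k/n\rangle}(e_{0};x)=1=e_{0}(x),\qquad P_{n,k}^{\langle k/n\rangle}(e_{1};x)=x=e_{1}(x),
\]
so these two convergences are trivial and, in fact, exact. For $e_{2}$, equation \eqref{test3} gives
\[
\bigl\lvert P_{n,k}^{\langle k/n\rangle}(e_{2};x)-x^{2}\bigr\rvert=\frac{(k+1)\,x(1-x)}{n+k}\leq \frac{k+1}{4(n+k)},
\]
uniformly for $x\in[0,1]$, since $x(1-x)\leq 1/4$. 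With $k\geq 0$ fixed, the right-hand side tends to $0$ as $n\to\infty$, so $P_{n,k}^{\langle k/n\rangle}(e_{2};\cdot)\to e_{2}$ uniformly on $[0,1]$.

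Having checked the three test-function conditions for positive linear operators, the Bohman--Korovkin theorem then immediately yields $P_{n,k}^{\langle k/n\rangle}(f;\cdot)\to f$ uniformly on $[0,1]$ for every $f\in C[0,1]$, which is the claim. There is no real obstacle here; the only subtle point is verifying that the convergence for $e_{2}$ is genuinely uniform, and this is handled by the elementary bound $x(1-x)\leq 1/4$ on $[0,1]$ combined with the explicit form of the remainder given by Lemma \ref{test}.
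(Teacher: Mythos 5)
Your proof is correct and is exactly the argument the paper intends: the lemma is stated without proof, but the paper proves the analogous result for the Kantorovich variant (the theorem establishing \eqref{korkant}) by the very same route, namely checking the Korovkin test functions $e_{0},e_{1},e_{2}$ against the moments of Lemma \ref{test} and invoking the Bohman--Korovkin theorem. Your explicit verification of positivity and of the uniform bound $x(1-x)\le 1/4$ for the $e_{2}$ remainder supplies the details the paper leaves to the reader.
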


For $\alpha=\frac{k}{n}$, $k\geq0,$ we can give the next theorem from the
results given in \cite{Razi-Tez}.

\begin{theorem}
Let $n\in%
\mathbb{N}
$ and $k$ be nonnegative real number$.\ $Then for every $f\in C\left[
0,1\right]  ,$ the following inequality holds%
\begin{equation}
\left \Vert P_{n,k}^{\left \langle \frac{k}{n}\right \rangle }\left(  f;x\right)
-f\left(  x\right)  \right \Vert _{C[0,1]}\leq \frac{3}{2}\omega \left(
f,\sqrt{\frac{k+1}{n+k}}\right)  \label{surmod}%
\end{equation}
for $k=1,$ which concludes that%
\[
\left \Vert P_{n}^{\left \langle \frac{1}{n}\right \rangle }\left(  f;x\right)
-f\left(  x\right)  \right \Vert _{C[0,1]}\leq \frac{3}{2}\omega \left(
f,\sqrt{\frac{2}{n+1}}\right)
\]
where $\omega \left(  f,.\right)  $ denotes modulus of continuity of $f$
defined by
\[
\omega \left(  f,\delta \right)  =\sup_{\substack{x,t\in \left[  a,b\right]
\\ \left \vert t-x\right \vert \leq \delta}}\left \vert f\left(  t\right)
-f\left(  x\right)  \right \vert
\]
for $\delta>0.$
\end{theorem}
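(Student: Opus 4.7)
The plan is to apply the classical Popoviciu/Shisha--Mond technique for positive linear operators that reproduce linear functions, the extra factor $3/2$ arising from exploiting the bound $x(1-x)\le 1/4$ on $[0,1]$.

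First I would recall the standard property of the modulus of continuity: for any $\delta>0$ and any $t,x\in[0,1]$,
\[
|f(t)-f(x)|\le \omega(f,\delta)\left(1+\frac{|t-x|}{\delta}\right).
\]
Using Lemma~\ref{test}, the operator $P_{n,k}^{\langle k/n\rangle}$ is positive and linear with $P_{n,k}^{\langle k/n\rangle}(e_0;x)=1$, so applying it to this inequality in the variable $t$ gives
\[
\bigl|P_{n,k}^{\langle k/n\rangle}(f;x)-f(x)\bigr|
\le \omega(f,\delta)\left(1+\frac{1}{\delta}\,P_{n,k}^{\langle k/n\rangle}(|e_1-x|;x)\right).
\]

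Next I would estimate $P_{n,k}^{\langle k/n\rangle}(|e_1-x|;x)$ by the Cauchy--Schwarz inequality for positive linear functionals: since $P_{n,k}^{\langle k/n\rangle}(e_0;x)=1$,
\[
P_{n,k}^{\langle k/n\rangle}(|e_1-x|;x)\le \sqrt{P_{n,k}^{\langle k/n\rangle}\bigl((e_1-x)^2;x\bigr)}
=\sqrt{\frac{(k+1)x(1-x)}{n+k}},
\]
where the second central moment comes from Corollary~\ref{j}. Using $x(1-x)\le 1/4$ uniformly on $[0,1]$, this is bounded by $\tfrac{1}{2}\sqrt{(k+1)/(n+k)}$.

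Finally I would choose $\delta=\sqrt{(k+1)/(n+k)}$, which turns the parenthesis into $1+\tfrac{1}{2}=\tfrac{3}{2}$, and then take the supremum over $x\in[0,1]$ to conclude
\[
\bigl\|P_{n,k}^{\langle k/n\rangle}(f;\cdot)-f\bigr\|_{C[0,1]}\le \frac{3}{2}\,\omega\!\left(f,\sqrt{\frac{k+1}{n+k}}\right).
\]
The stated consequence for $k=1$ is immediate since then $(k+1)/(n+k)=2/(n+1)$. There is no real obstacle here; the only point requiring care is matching the Cauchy--Schwarz estimate with the bound $x(1-x)\le 1/4$ so that the multiplicative constant collapses from the generic $2$ (obtained by the bare choice $\delta^2=L((e_1-x)^2;x)$ pointwise) to the sharper uniform $3/2$ after taking the supremum.
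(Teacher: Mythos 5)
Your proof is correct and complete. The paper itself does not prove this theorem --- it only states it as a consequence of results in the cited thesis of Razi (reference \cite{Razi-Tez}) specialized to $\alpha=\frac{k}{n}$ --- and your argument is exactly the standard Popoviciu-type estimate one would expect there: the modulus-of-continuity inequality, positivity and $P_{n,k}^{\langle k/n\rangle}(e_0;x)=1$, Cauchy--Schwarz on $P_{n,k}^{\langle k/n\rangle}(|e_1-x|;x)$ together with the second central moment from Corollary~\ref{j}, the uniform bound $x(1-x)\le\frac14$, and the choice $\delta=\sqrt{(k+1)/(n+k)}$ yielding the constant $\frac32$.
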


We compare the rate of convergence between the operators\textbf{\ }%
$P_{n,k}^{\left \langle \frac{k}{n}\right \rangle }$ in $\left(
\text{\ref{PolBer}}\right)  $ and Lupa\c{s} operators $P_{n}^{\left \langle
\frac{1}{n}\right \rangle }\left(  f;x\right)  $ with illustrative graphics. It
is observed that the operators\textbf{\ }$P_{n,k}^{\left \langle \frac{k}%
{n}\right \rangle }$ give a better approximation than $P_{n}^{\left \langle
\frac{1}{n}\right \rangle }\left(  f;x\right)  $ for $0\leq k<1.$

\begin{remark}
Let $n\in%
\mathbb{N}
$,$\ k\geq0.\ $Then for every $f\in C\left[  0,1\right]  ,$ the inequality
$P_{n,k}^{\left \langle \frac{k}{n}\right \rangle }\left(  \left(
e_{1}-x\right)  ^{2};x\right)  \leq$ $P_{n}^{\left \langle \frac{1}%
{n}\right \rangle }\left(  \left(  e_{1}-x\right)  ^{2};x\right)  $ holds such
that $k\leq1.\ $It follows that the operators $P_{n,k}^{\left \langle \frac
{k}{n}\right \rangle }$ provide a better approximation than the classical
operators $P_{n}^{\left \langle \frac{1}{n}\right \rangle }.$
\end{remark}

Now, we demonstrate the behaviour of the approximation for the operators
$P_{n,k}^{\left \langle \frac{k}{n}\right \rangle }\ $by graphical
examples.

\begin{example}
Let $f\left(  x\right)  =20x^{6}+3x^{3}-5x^{2}+2x,\ n=10$ and $k=0.1.\ $In
Figure 1, we analyse the convergence of the new operators $P_{n,k}%
^{\left \langle \frac{k}{n}\right \rangle }$, the classical operators
$P_{n}^{\left \langle \frac{1}{n}\right \rangle }$ and the classical Bernstein operators $B_{n}$ to the function $f.\ $It is seen that for $k=0.1$, $P_{n,k}^{\left \langle \frac{k}{n}\right \rangle }$ provides a better approximation than the operators $P_{n}^{\left \langle \frac{1}{n}\right \rangle }$ to the function $f$.
\end{example}

\begin{example}
Let us consider $f\left(  x\right)  =\sin \left(  6\pi x\right)  +5\sin \left(
\frac{1}{3}\pi x\right)  .$ Figure 2 illustrates the approximation process of
the operators $P_{n,k}^{\left \langle \frac{k}{n}\right \rangle }$ for $k=0.5$
fixed and the special choices of $n=10,50\ $and $100$. It can be observed that
as the value of $n$ increases,\ the approximation of the operators
$P_{n,k}^{\left \langle \frac{k}{n}\right \rangle }\ $is getting better.
\end{example}

\begin{example}
Let $f\left(  x\right)  =\sin \left(  2\pi x\right)  +2\sin \left(  \frac{1}%
{2}\pi x\right)  .$ Figure 3 presents the convergence of $P_{n,k}%
^{\left \langle \frac{k}{n}\right \rangle }$ to the function $f$ for
$n=10\ $fixed and $k=0.1,\ 0.3,\ 0.6,\ 1$ and $3.$ From this figure, it
follows that when $k$ gets smaller towards to zero, approximation is better
than others.
\end{example}

\begin{figure}[pth]
	\includegraphics*[width=7cm]{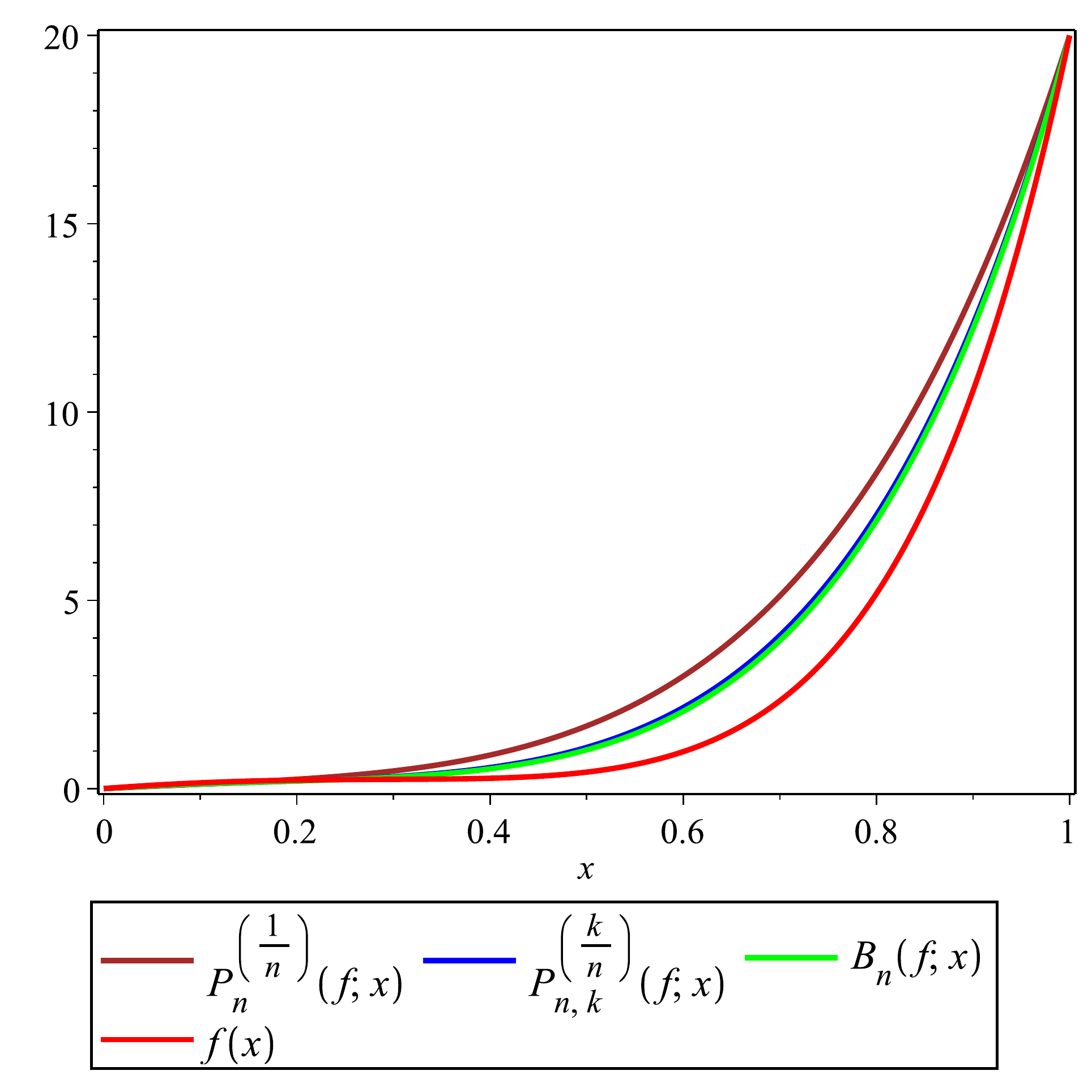}\caption{Convergence of
		$P_{n,k}^{\left \langle \frac{k}{n}\right \rangle }$, $P_{n}^{\left \langle
			\frac{1}{n}\right \rangle }$ and $B_{n}$  to the function $f$ for $n=10$ and $k=0.1$ }%
\end{figure}

\begin{figure}[pth]
\includegraphics*[width=7cm]{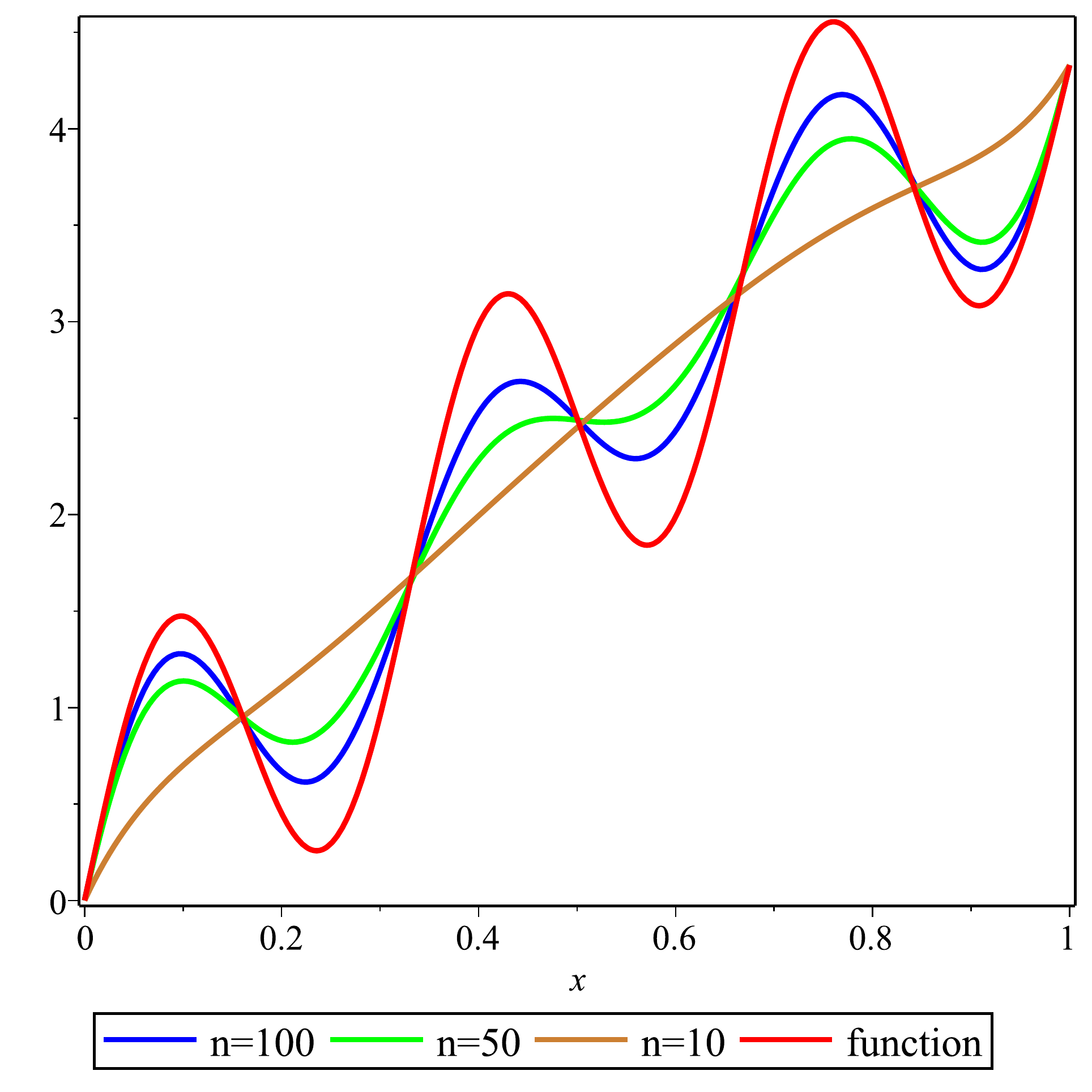}\caption{Approximation of the
operators $P_{n,k} ^{\left \langle \frac{k}{n}\right \rangle }$ for $k=0.5$
fixed and $n=10,50,100$ }%
\end{figure}

\begin{figure}[pth]
\includegraphics*[width=7cm]{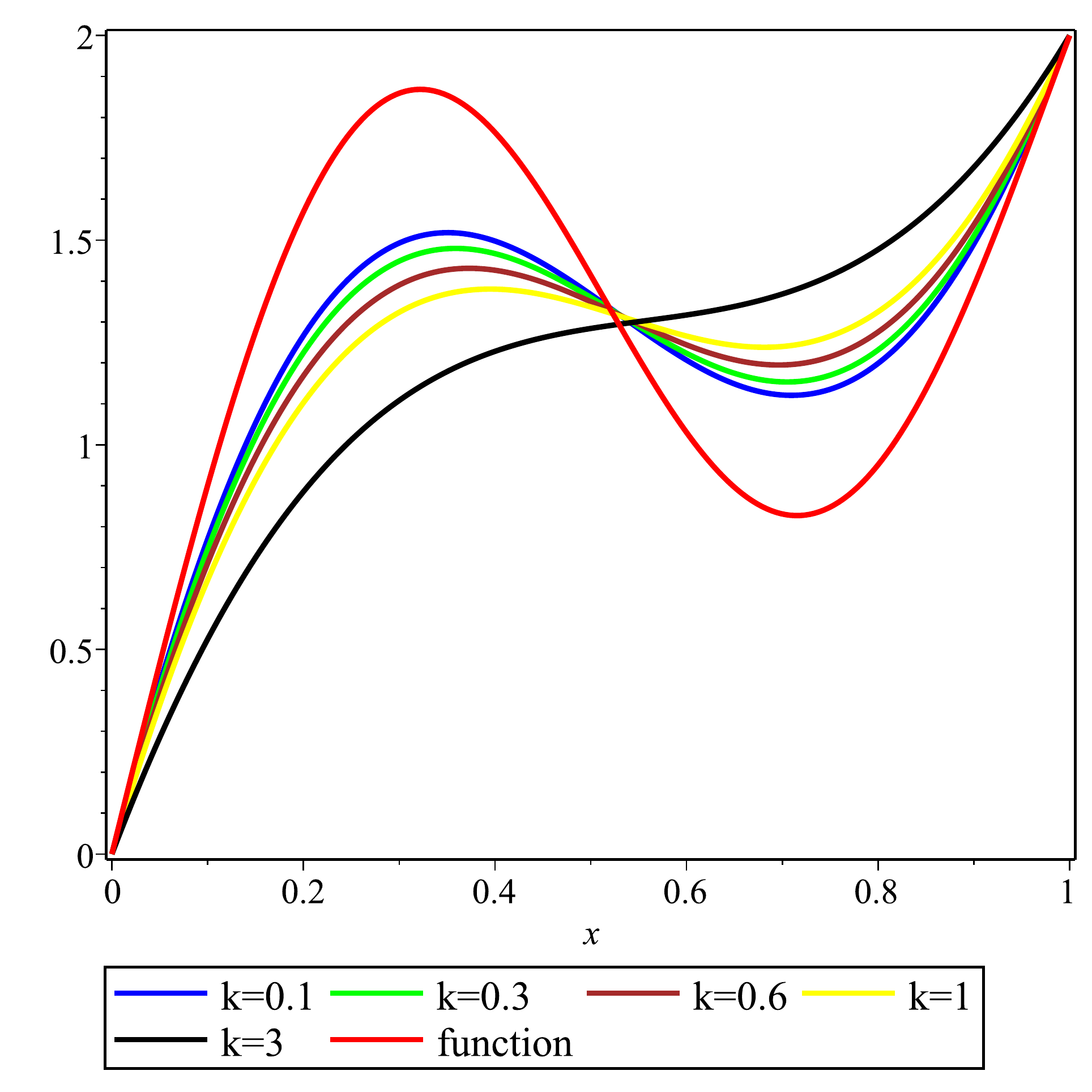}\caption{Approximation of the
operators $P_{n,k}^{\left \langle \frac{k}{n}\right \rangle }$ for $n=10$ fixed
and $k=0.1,\ 0.3,\ 0.6,\ 1,\ 3$ }%
\end{figure}

\newpage

 Now we consider a Kantorovich-Stancu modification of Lupa\c{s} type
operators\textbf{\ }$\left(  \text{\ref{PolBer}}\right)  $ as follows%
\begin{equation}
K_{n}^{\left(  \alpha,\beta,k\right)  }\left(  f;x\right)  =\frac{n+\beta
+1}{\left(  n\right)  _{n,k}}\sum_{m=0}^{n}\left(
\begin{array}
[c]{c}%
n\\
m
\end{array}
\right)  \left(  nx\right)  _{m,k}\left(  n-nx\right)  _{n-m,k}\int
\limits_{\frac{m+\alpha}{n+\beta+1}}^{\frac{m+\alpha+1}{n+\beta+1}}f\left(
t\right)  dt, \label{PolKant}%
\end{equation}
where $\alpha,\beta,k$ are nonnegative real number, and $0\leq \alpha \leq
\beta.$ In the case of $k=0,$ it reduces to the Kantorovich-Stancu
modification of Bernstein operators \cite{Barbosu}. For $k=1$, it gives the
special case $\alpha_{2}=\beta_{2}=0$ of Kantorovich type Lupas--Stancu
operators given by $\left(  \text{\ref{Kant2}}\right)  .$ For $k=1,\alpha
=\beta=0$, this operator reduces to the Kantorovich modification of the
operators $P_{n}^{\left \langle \frac{1}{n}\right \rangle }\left(  f;x\right)  $
given by $\left(  \text{\ref{Kant1}}\right)  .$

Taking into account the results given for the operators $P_{n,k}^{\left \langle
\frac{k}{n}\right \rangle }~$in Lemma \ref{test}, we
can give the next lemma.

\begin{lemma}
\label{testkant} Let $n\in%
\mathbb{N}
$,$\ k\geq0.$ Then for the operators $K_{n}^{\left(  \alpha,\beta,k\right)  }$
defined by $\left(  \text{\ref{PolKant}}\right)  $, we have
\end{lemma}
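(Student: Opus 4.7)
The plan is to reduce the computation of each moment $K_n^{(\alpha,\beta,k)}(e_j;x)$ to a linear combination of the already-known moments $P_{n,k}^{\langle k/n \rangle}(e_i;x)$ for $i \le j$ from Lemma \ref{test}. The bridge is the explicit evaluation of the inner integral
\[
I_{m,j} := \int_{\frac{m+\alpha}{n+\beta+1}}^{\frac{m+\alpha+1}{n+\beta+1}} t^{j}\,dt = \frac{1}{(j+1)(n+\beta+1)^{j+1}}\bigl[(m+\alpha+1)^{j+1}-(m+\alpha)^{j+1}\bigr],
\]
which, by the binomial theorem, expands into a polynomial in $m$ of degree $j$ with coefficients depending only on $\alpha,\beta,n,j$.

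First I would substitute this expansion into the definition (\ref{PolKant}) and pull the factor $(n+\beta+1)$ through. After collecting powers of $m$, the double sum decouples: for each power $m^{i}$ appearing in the expanded integral, the inner sum
\[
\frac{1}{(n)_{n,k}}\sum_{m=0}^{n}\binom{n}{m}(nx)_{m,k}(n-nx)_{n-m,k}\,\frac{m^{i}}{n^{i}}
\;=\;P_{n,k}^{\langle k/n\rangle}(e_{i};x)
\]
is precisely the $i$-th moment of the Lupa\c{s} type operator, supplied by Lemma \ref{test}. Thus each $K_n^{(\alpha,\beta,k)}(e_j;x)$ becomes a finite combination, with coefficients depending on $\alpha,\beta,n$, of the polynomials $P_{n,k}^{\langle k/n\rangle}(e_i;x)$, $i=0,\dots,j$.

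Concretely, $K_n^{(\alpha,\beta,k)}(e_0;x)=1$ follows immediately from (\ref{test1}) since $I_{m,0}=1/(n+\beta+1)$. For $e_1$, using $I_{m,1}=(2m+2\alpha+1)/[2(n+\beta+1)^2]$ together with (\ref{test1}) and (\ref{test2}) yields an expression of the form $\bigl(2nx+2\alpha+1\bigr)/\bigl[2(n+\beta+1)\bigr]$. The cases $j=2,3,4$ proceed identically: expand $(m+\alpha+1)^{j+1}-(m+\alpha)^{j+1}$, rewrite $m^i = n^i (m/n)^i$, and plug in the relevant formulas (\ref{test3})--(\ref{test5}).

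The main obstacle is purely algebraic: for $j=3$ and especially $j=4$, one must combine the already lengthy expressions in (\ref{test4})--(\ref{test5}) with binomial coefficients of $(m+\alpha)$ of the same degree, simplify over the common denominator $(n+\beta+1)^{j+1}(n+k)(n+2k)(n+3k)$, and present the result in a compact form that clearly exhibits the correct limits (in particular the reduction to known operators when $k=0,1$ or $\alpha=\beta=0$). No genuinely new idea is needed — only careful bookkeeping — but I would double-check each moment by verifying the limiting cases $k=0$ (classical Bernstein--Kantorovich) and $k=1,\alpha=\beta=0$ (the operator in (\ref{Kant1})) against the published moment formulas for those operators.
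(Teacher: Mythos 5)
Your proposal matches the paper's own proof: the authors likewise evaluate the inner integral as $\frac{(m+\alpha+1)^{j+1}-(m+\alpha)^{j+1}}{(j+1)(n+\beta+1)^{j+1}}$, expand in powers of $m$, and rewrite each moment $K_{n}^{(\alpha,\beta,k)}(e_{j};x)$ as a linear combination of $P_{n,k}^{\langle k/n\rangle}(e_{i};x)$, $i=0,\dots,j$, from Lemma \ref{test}. The approach and the bookkeeping are essentially identical, so the proposal is correct.
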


\begin{align}
K_{n}^{\left(  \alpha,\beta,k\right)  }\left(  e_{0};x\right)   &
=1,\label{testkant1}\\
K_{n}^{\left(  \alpha,\beta,k\right)  }\left(  e_{1};x\right)   &  =\frac
{nx}{n+\beta+1}+\frac{2\alpha+1}{2\left(  n+\beta+1\right)  }%
,\label{testkant2}\\
K_{n}^{\left(  \alpha,\beta,k\right)  }\left(  e_{2};x\right)   &
=\frac{n^{2}\left(  n-1\right)  x^{2}}{\left(  n+\beta+1\right)  ^{2}%
(n+k)}+\frac{\left(  \left(  2\alpha+2+k\right)  n+\left(  2\alpha+1\right)
k\right)  nx}{\left(  n+\beta+1\right)  ^{2}(n+k)}\nonumber \\
&  +\frac{3\alpha^{2}+3\alpha+1}{3\left(  n+\beta+1\right)  ^{2}%
},\label{testkant3}\\
K_{n}^{\left(  \alpha,\beta,k\right)  }\left(  e_{3};x\right)   &
=\frac{(n-1)(n-2)n^{3}x^{3}}{\left(  n+\beta+1\right)  ^{3}(n+k)\left(
n+2k\right)  }+\frac{3n^{2}\left(  n-1\right)  \left(  \left(  3+2\alpha
+2k\right)  n+2\left(  1+2\alpha \right)  k\right)  x^{2}}{2\left(
n+\beta+1\right)  ^{3}(n+k)\left(  n+2k\right)  }\nonumber \\
&  +\left(  \frac{4\left(  1+3\alpha(1+\alpha)\right)  k^{2}n+6k\left(
2+k+\alpha(5+3\alpha+2k\right)  )n^{2}}{2\left(  n+\beta+1\right)
^{3}(n+k)(n+2k)}\right. \nonumber \\
&  \left.  \frac{+\left(  7+6\alpha^{2}+6\alpha(2+k)+k(9+4k)\right)  n^{3}%
}{2\left(  n+\beta+1\right)  ^{3}(n+k)(n+2k)}\right)  x+\frac{4\alpha
^{3}+6\alpha^{2}+4\alpha+1}{4\left(  n+\beta+1\right)  ^{3}} \label{testkant4}%
\end{align}
\begin{align}
K_{n}^{\left(  \alpha,\beta,k\right)  }\left(  e_{4};x\right)   &
=\frac{(n-1)(n-2)\not (  n-3)n^{4}x^{4}}{\left(  n+\beta+1\right)
^{4}(n+k)(n+2k)(n+3k)}\nonumber \\
&  +\frac{2n^{3}\left(  n-1\right)  \left(  n-2\right)  \left(  \left(
4+2\alpha+3k\right)  n+3\left(  1+2a\right)  k\right)  x^{3}}{\left(
n+\beta+1\right)  ^{4}\not (  n+k)(n+2k)(n+3k)}\nonumber \\
&  +\left(  \frac{-12\left(  1+3\alpha(1+\alpha)\right)  k^{2}n^{2}%
+k(-27-5k+6\alpha(-11+\alpha(-5+6k)))n^{3}}{\left(  n+\beta+1\right)
^{4}(n+k)(n+2k)(n+3k)}\right. \nonumber \\
\end{align}
\begin{align}
&  \left.  \frac{3\left(  -5-2\alpha(3+\alpha)+k+2\alpha(9+5\alpha
)k+2(1+6\alpha)k^{2}\right)  n^{4}}{\left(  n+\beta+1\right)  ^{4}%
(n+k)(n+2k)(n+3k)}\right. \nonumber \\
&  \left.  \frac{\left(  15+6\alpha^{2}+6\alpha(3+2k)+k(24+11k)\right)  n^{5}%
}{\left(  n+\beta+1\right)  ^{4}(n+k)(n+2k)(n+3k)}\right)  x^{2}\nonumber \\
&  \left(  +\frac{6(1+2\alpha(2+\alpha(3+2\alpha)))k^{3}n+k^{2}(23+12k+2\alpha
(40+51\alpha+22\alpha^{2}+18(1+\alpha)k))n^{2}}{\left(  n+\beta+1\right)
^{4}(n+k)(n+2k)(n+3k)}\right. \nonumber \\
&  +\frac{3k(7+22\alpha+22\alpha^{2}+8\alpha^{3}+9k+22\alpha k+10\alpha
^{2}k+4k^{2}+8\alpha k^{2})n^{3}}{\left(  n+\beta+1\right)  ^{4}%
(n+k)(n+2k)(n+3k)}\nonumber \\
&  \left.  +\frac{\left(  6+2\alpha(7+2\alpha(3+\alpha))+15k+6\alpha
(3+\alpha)k+8(2+\alpha)k^{2}+6k^{3}\right)  n^{4}}{\left(  n+\beta+1\right)
^{4}(n+k)(n+2k)(n+3k)}\right)  x\nonumber \\
&  +\frac{5\alpha^{4}+10\alpha^{3}+10\alpha^{2}+5\alpha+1}{5\left(
n+\beta+1\right)  ^{4}} \label{testkant5}%
\end{align}

\begin{proof}
Following the same procedure as in Lemma \ref{test} and by direct calculations, we can complete proof of the theorem easily. Now,
we recall the moments of $P_{n,k}^{\left \langle \frac{k}{n}\right \rangle }$which are
indicated in Lemma \ref{test}. For $f\left(
t\right)  =e_{0}\left(  t\right)  ,$ we have%
\begin{align*}
K_{n}^{\left(  \alpha,\beta,k\right)  }\left(  e_{0};x\right)   &
=\frac{n+\beta+1}{\left(  n\right)  _{n,k}}\sum_{m=0}^{n}\left(
\begin{array}
[c]{c}%
n\\
m
\end{array}
\right)  \left(  nx\right)  _{m,k}\left(  n-nx\right)  _{n-m,k}\int
\limits_{\frac{m+\alpha}{n+\beta+1}}^{\frac{m+\alpha+1}{n+\beta+1}}dt\\
&  =\frac{n+\beta+1}{\left(  n\right)  _{n,k}}\sum_{m=0}^{n}\left(
\begin{array}
[c]{c}%
n\\
m
\end{array}
\right)  \left(  nx\right)  _{m,k}\left(  n-nx\right)  _{n-m,k}\left(
\frac{1}{n+\beta+1}\right) \\
&  =\frac{1}{\left(  n\right)  _{n,k}}\sum_{m=0}^{n}\left(
\begin{array}
[c]{c}%
n\\
m
\end{array}
\right)  \left(  nx\right)  _{m,k}\left(  n-nx\right)  _{n-m,k}\\
&  =P_{n,k}^{\left \langle \frac{k}{n}\right \rangle }\left(  e_{0};x\right) \\
&  =1
\end{align*}
For the case $f\left(  t\right)  =e_{1}\left(  t\right)  ,\ $it follows%
\begin{align*}
K_{n}^{\left(  \alpha,\beta,k\right)  }\left(  e_{1};x\right)   &
=\frac{n+\beta+1}{\left(  n\right)  _{n,k}}\sum_{m=0}^{n}\left(
\begin{array}
[c]{c}%
n\\
m
\end{array}
\right)  \left(  nx\right)  _{m,k}\left(  n-nx\right)  _{n-m,k}\int
\limits_{\frac{m+\alpha}{n+\beta+1}}^{\frac{m+\alpha+1}{n+\beta+1}}tdt\\
&  =\frac{n+\beta+1}{2\left(  n\right)  _{n,k}}\sum_{m=0}^{n}\left(
\begin{array}
[c]{c}%
n\\
m
\end{array}
\right)  \left(  nx\right)  _{m,k}\left(  n-nx\right)  _{n-m,k}\left(
\frac{\left(  m+\alpha+1\right)  ^{2}-\left(  m+\alpha \right)  ^{2}}{\left(
n+\beta+1\right)  ^{2}}\right) \\
&  =\frac{2\alpha+1}{2\left(  n+\beta+1\right)  }\frac{1}{\left(  n\right)
_{n,k}}\sum_{m=0}^{n}\left(
\begin{array}
[c]{c}%
n\\
m
\end{array}
\right)  \left(  nx\right)  _{m,k}\left(  n-nx\right)  _{n-m,k}\\
&  +\frac{n}{n+\beta+1}\frac{1}{\left(  n\right)  _{n,k}}\sum_{m=0}^{n}\left(
\begin{array}
[c]{c}%
n\\
m
\end{array}
\right)  \left(  nx\right)  _{m,k}\left(  n-nx\right)  _{n-m,k}\left(
\frac{m}{n}\right) 
\end{align*}
\begin{align*}
&  =\frac{n}{n+\beta+1}P_{n,k}^{\left \langle \frac{k}{n}\right \rangle }\left(
e_{1};x\right)  +\frac{2\alpha+1}{2\left(  n+\beta+1\right)  }P_{n,k}%
^{\left \langle \frac{k}{n}\right \rangle }\left(  e_{0};x\right) \\
&  =\frac{2\alpha+1}{2\left(  n+\beta+1\right)  }+\frac{n}{n+\beta+1}x
\end{align*}
Taking into account $P_{n,k}^{\left \langle \frac{k}{n}\right \rangle }\left(
e_{i};.\right)  \ i=0,1,2$, we can write
\begin{align*}
K_{n}^{\left(  \alpha,\beta,k\right)  }\left(  e_{2};x\right)   &
=\frac{n+\beta+1}{\left(  n\right)  _{n,k}}\sum_{m=0}^{n}\left(
\begin{array}
[c]{c}%
n\\
m
\end{array}
\right)  \left(  nx\right)  _{m,k}\left(  n-nx\right)  _{n-m,k}\int
\limits_{\frac{m+\alpha}{n+\beta+1}}^{\frac{m+\alpha+1}{n+\beta+1}}t^{2}dt\\
&  =\frac{1}{3\left(  n+\beta+1\right)  ^{2}\left(  n\right)  _{n,k}}%
\sum_{m=0}^{n}\left(
\begin{array}
[c]{c}%
n\\
m
\end{array}
\right)  \left(  nx\right)  _{m,k}\left(  n-nx\right)  _{n-m,k}\left(  \left(
m+\alpha+1\right)  ^{3}-\left(  m+\alpha \right)  ^{3}\right)\\
&  =\frac{n^{2}}{\left(  n+\beta+1\right)  ^{2}}\frac{1}{\left(  n\right)
_{n,k}}\sum_{m=0}^{n}\left(
\begin{array}
[c]{c}%
n\\
m
\end{array}
\right)  \left(  nx\right)  _{m,k}\left(  n-nx\right)  _{n-m,k}\left(
\frac{m^{2}}{n^{2}}\right) \\
&  +\frac{\left(  2\alpha+1\right)  n}{\left(  n+\beta+1\right)  ^{2}}\frac
{1}{\left(  n\right)  _{n,k}}\sum_{m=0}^{n}\left(
\begin{array}
[c]{c}%
n\\
m
\end{array}
\right)  \left(  nx\right)  _{m,k}\left(  n-nx\right)  _{n-m,k}\left(
\frac{m}{n}\right) \\
&  +\frac{\left(  \alpha+1\right)  ^{3}-\alpha^{3}}{3\left(  n+\beta+1\right)
^{2}}\frac{1}{\left(  n\right)  _{n,k}}\sum_{m=0}^{n}\left(
\begin{array}
[c]{c}%
n\\
m
\end{array}
\right)  \left(  nx\right)  _{m,k}\left(  n-nx\right)  _{n-m,k}\\
&  =\frac{n^{2}}{\left(  n+\beta+1\right)  ^{2}}P_{n,k}^{\left \langle \frac
{k}{n}\right \rangle }\left(  e_{2};x\right)  +\frac{\left(  2\alpha+1\right)
n}{\left(  n+\beta+1\right)  ^{2}}P_{n,k}^{\left \langle \frac{k}%
{n}\right \rangle }\left(  e_{1};x\right) \\
&  +\frac{\left(  \alpha+1\right)  ^{3}-\alpha^{3}}{3\left(  n+\beta+1\right)
^{2}}P_{n,k}^{\left \langle \frac{k}{n}\right \rangle }\left(  e_{0};x\right) \\
&  =\frac{n^{2}}{\left(  n+\beta+1\right)  ^{2}}\left \{  x^{2}+\frac{k+1}%
{n+k}x\left(  1-x\right)  \right \}  +\frac{\left(  2\alpha+1\right)
n}{\left(  n+\beta+1\right)  ^{2}}x+\frac{\left(  \alpha+1\right)  ^{3}%
-\alpha^{3}}{3\left(  n+\beta+1\right)  ^{2}}%
\end{align*}
The proof for $f(t)=e_{i}\left(  t\right)  ,\ i=3,4$ is quite similar as
others, hence results are given as follows
\begin{align*}
K_{n}^{\left(  \alpha,\beta,k\right)  }\left(  e_{3};x\right)   &
=\frac{n+\beta+1}{\left(  n\right)  _{n,k}}\sum_{m=0}^{n}\left(
\begin{array}
[c]{c}%
n\\
m
\end{array}
\right)  \left(  nx\right)  _{m,k}\left(  n-nx\right)  _{n-m,k}\int
\limits_{\frac{m+\alpha}{n+\beta+1}}^{\frac{m+\alpha+1}{n+\beta+1}}t^{3}dt\\
&  =\frac{1}{4\left(  n+\beta+1\right)  ^{3}}\left \{  4n^{3}P_{n,k}%
^{\left \langle \frac{k}{n}\right \rangle }\left(  e_{3};x\right)  +6\left(
2\alpha+1\right)  n^{2}P_{n,k}^{\left \langle \frac{k}{n}\right \rangle }\left(
e_{2};x\right)  \right. \\
&  \left.  +4\left(  3\alpha^{2}+3\alpha+1\right)  nP_{n,k}^{\left \langle
\frac{k}{n}\right \rangle }\left(  e_{1};x\right)  +\left(  \left(
\alpha+1\right)  ^{4}-\alpha^{4}\right)  P_{n,k}^{\left \langle \frac{k}%
{n}\right \rangle }\left(  e_{0};x\right)  \right \}
\end{align*}
and
\begin{align*}
K_{n}^{\left(  \alpha,\beta,k\right)  }\left(  e_{4};x\right)   &
=\frac{n+\beta+1}{\left(  n\right)  _{n,k}}\sum_{m=0}^{n}\left(
\begin{array}
[c]{c}%
n\\
m
\end{array}
\right)  \left(  nx\right)  _{m,k}\left(  n-nx\right)  _{n-m,k}\int
\limits_{\frac{m+\alpha}{n+\beta+1}}^{\frac{m+\alpha+1}{n+\beta+1}}t^{4}dt\\
&  =\frac{1}{5\left(  n+\beta+1\right)  ^{4}}\left \{  5n^{4}P_{n,k}%
^{\left \langle \frac{k}{n}\right \rangle }\left(  e_{4};x\right)  +10\left(
2\alpha+1\right)  n^{3}P_{n}^{\left \langle \frac{k}{n}\right \rangle }\left(
e_{3};x\right)  \right. \\
&  \left.  +10\left(  3\alpha^{2}+3\alpha+1\right)  n^{2}P_{n,k}^{\left \langle
\frac{k}{n}\right \rangle }\left(  e_{2};x\right)  \right. \\
&  \left.  +5\left(  4\alpha^{3}+6\alpha^{2}+4\alpha+1\right)  nP_{n,k}%
^{\left \langle \frac{k}{n}\right \rangle }\left(  e_{1};x\right)  +\left(
\left(  \alpha+1\right)  ^{5}-\alpha^{5}\right)  P_{n,k}^{\left \langle
\frac{k}{n}\right \rangle }\left(  e_{0};x\right)  \right \}  ,
\end{align*}
by taking into account $P_{n,k}^{\left \langle \frac{k}{n}\right \rangle
}\left(  e_{i};.\right)  \ i=0,1,2,3,4$, we obtain the desired results.
\end{proof}

\begin{corollary}
\label{centkant} Let $n\in%
\mathbb{N}
$,$\ k\geq0.$ Then the central moments of the operators $K_{n}^{\left(
\alpha,\beta,k\right)  }$ are given by%
\begin{align}
K_{n}^{\left(  \alpha,\beta,k\right)  }\left(  e_{1}-x;x\right)   &
=\frac{2\alpha+1}{2\left(  n+\beta+1\right)  }-\frac{\beta+1}{n+\beta
+1}x,\label{centkant1}\\
K_{n}^{\left(  \alpha,\beta,k\right)  }\left(  \left(  e_{1}-x\right)
^{2};x\right)   &  =\frac{\left(  \left(  \beta+1\right)  ^{2}\left(
n+k\right)  -\left(  k+1\right)  n^{2}\right)  x^{2}}{\left(  n+\beta
+1\right)  ^{2}\left(  n+k\right)  }\nonumber \\
&  +\frac{\left(  \left(  k+1\right)  n^{2}-\left(  1+2\alpha \right)  \left(
\beta+1\right)  \left(  n+k\right)  \right)  x}{\left(  n+\beta+1\right)
^{2}\left(  n+k\right)  }\nonumber \\
&  +\frac{3\alpha^{2}+3\alpha+1}{3\left(  n+\beta+1\right)  ^{2}}.
\label{centkant2}%
\end{align}
Moreover $K_{n}^{\left(  \alpha,\beta,k\right)  }\left(  \left(
e_{1}-x\right)  ^{4};x\right)  =O\left(  \frac{1}{n^{2}}\right)  $ as
$n\rightarrow0.$
\end{corollary}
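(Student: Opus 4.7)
The plan is to derive all three central moments directly from the explicit formulas for $K_{n}^{(\alpha,\beta,k)}(e_{j};x)$, $j=0,1,2,3,4$, given in Lemma \ref{testkant}, using only the linearity of the operator and the binomial expansion of $(e_{1}-x)^{j}$.

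For the first central moment, I would simply write
\[
K_{n}^{(\alpha,\beta,k)}(e_{1}-x;x)=K_{n}^{(\alpha,\beta,k)}(e_{1};x)-x\,K_{n}^{(\alpha,\beta,k)}(e_{0};x),
\]
substitute \eqref{testkant1}--\eqref{testkant2}, and combine the two $x$-terms over the common denominator $n+\beta+1$. This gives \eqref{centkant1} after one line of algebra.

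For the second central moment, I would expand $(e_{1}-x)^{2}=e_{2}-2xe_{1}+x^{2}e_{0}$ and, again by linearity, obtain
\[
K_{n}^{(\alpha,\beta,k)}((e_{1}-x)^{2};x)=K_{n}^{(\alpha,\beta,k)}(e_{2};x)-2x\,K_{n}^{(\alpha,\beta,k)}(e_{1};x)+x^{2}.
\]
Plugging in \eqref{testkant1}--\eqref{testkant3} and reducing the $x^{2}$ terms to the common denominator $(n+\beta+1)^{2}(n+k)$ (noting that $\frac{n^{2}(n-1)}{(n+\beta+1)^{2}(n+k)}-\frac{2n}{n+\beta+1}+1$ simplifies to the coefficient displayed in \eqref{centkant2}) yields the stated formula. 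The main bookkeeping task here is to separate the $x^{2}$, $x$, and constant contributions coming from each of $K_{n}^{(\alpha,\beta,k)}(e_{2};x)$ and $K_{n}^{(\alpha,\beta,k)}(e_{1};x)$ and to verify the indicated groupings; this is routine but is where a sign or factor error is most likely to occur.

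For the fourth central moment estimate, I would expand
\[
(e_{1}-x)^{4}=e_{4}-4xe_{3}+6x^{2}e_{2}-4x^{3}e_{1}+x^{4}e_{0}
\]
and substitute \eqref{testkant1}--\eqref{testkant5}. The key observation I need is that the leading $x^{4}$ coefficient $\tfrac{n^{4}(n-1)(n-2)(n-3)}{(n+\beta+1)^{4}(n+k)(n+2k)(n+3k)}$ from $K_{n}^{(\alpha,\beta,k)}(e_{4};x)$, the $-4x^{4}$ from the other terms assembled, and the $+x^{4}$ at the end must combine to $O(n^{-2})$, and similarly the $x^{3}$, $x^{2}$, $x$, and constant terms must each collapse to order at most $n^{-2}$. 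Rather than tracking every cancellation explicitly, I would argue structurally: each $K_{n}^{(\alpha,\beta,k)}(e_{j};x)$ equals $x^{j}+O(n^{-1})$ with the $O(n^{-1})$ part a polynomial in $x$ whose coefficients are bounded uniformly in $n$, and therefore the signed combination $\sum_{j=0}^{4}\binom{4}{j}(-x)^{4-j}K_{n}^{(\alpha,\beta,k)}(e_{j};x)$ has its $O(1)$ and $O(n^{-1})$ parts annihilated (by comparison with the classical Bernstein case), leaving $O(n^{-2})$.

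The main obstacle is precisely this cancellation check for the fourth moment: although each individual moment has an elaborate expression, one must confirm that once combined the $O(1)$ and $O(n^{-1})$ parts vanish identically. I would handle this by comparing the highest-degree coefficients of $n$ in the numerators of \eqref{testkant3}--\eqref{testkant5} with the binomial expansion of $x^{4}$ from $x\cdot x^{3}$, $x^{2}\cdot x^{2}$, etc., which is most easily carried out by noting that $\frac{n^{j}(n-1)\cdots(n-j+1)}{(n+\beta+1)^{j}(n+k)\cdots(n+(j-1)k)}=1+O(n^{-1})$ uniformly for $x\in[0,1]$, so the leading-order $x^{j}$ terms of $K_{n}^{(\alpha,\beta,k)}(e_{j};x)$ match those of $x^{j}$ up to $O(n^{-1})$ and the second-order matching up to $O(n^{-2})$ follows by expanding these rational functions to second order in $n^{-1}$. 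This gives the claim $K_{n}^{(\alpha,\beta,k)}((e_{1}-x)^{4};x)=O(n^{-2})$ uniformly on $[0,1]$.
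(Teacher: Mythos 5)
Your proposal is correct and follows essentially the same route as the paper: the paper's own proof of this corollary is a one-line remark that it follows from the moment formulas of Lemma \ref{testkant} by ``simple computations,'' which is precisely the linearity-plus-binomial-expansion calculation you carry out (and your common-denominator reductions for \eqref{centkant1} and \eqref{centkant2} check out). For the $O(n^{-2})$ estimate of the fourth central moment your plan of verifying that the $O(n^{-1})$ contributions cancel by expanding the rational prefactors to second order in $n^{-1}$ is the right (and necessary) check, and is already more explicit than what the paper provides.
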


\begin{proof}
Exploiting the previous results and doing some simple computations allow the
proof of the corollary.
\end{proof}

\begin{lemma}
\label{m} For $n\in%
\mathbb{N}
$ and $k\geq0,$ we have%
\[
K_{n}^{\left(  \alpha,\beta,k\right)  }\left(  \left(  e_{1}-x\right)
^{2};x\right)  \leq \xi_{n,k}^{\alpha,\beta}%
\]
where $\xi_{n,k}^{\alpha,\beta}=\frac{1}{n+\beta+1}\left \{  \frac{k+1}%
{4}+\beta+2\alpha+\frac{\left(  \alpha+1\right)  ^{3}-\alpha^{3}}{3}\right \}
.$
\end{lemma}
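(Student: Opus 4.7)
The plan is to start from formula (\ref{centkant2}) in Corollary \ref{centkant}, which gives an explicit quadratic-in-$x$ expression for $K_{n}^{(\alpha,\beta,k)}((e_{1}-x)^{2};x)$, and to bound it by an $x$-free quantity via elementary inequalities on $[0,1]$. The first step is to regroup the numerator so that the coefficient $(k+1)n^{2}$ pairs up as $(k+1)n^{2}x(1-x)$, isolating the contributions of $\alpha$ and $\beta$. Using $3\alpha^{2}+3\alpha+1=(\alpha+1)^{3}-\alpha^{3}$ to match the form of $\xi_{n,k}^{\alpha,\beta}$, this produces the decomposition
\[
K_{n}^{(\alpha,\beta,k)}((e_{1}-x)^{2};x)=\underbrace{\frac{(k+1)n^{2}\,x(1-x)}{(n+\beta+1)^{2}(n+k)}}_{=:T_{1}(x)}+\underbrace{\frac{(\beta+1)^{2}x^{2}-(1+2\alpha)(\beta+1)x}{(n+\beta+1)^{2}}}_{=:T_{2}(x)}+\underbrace{\frac{(\alpha+1)^{3}-\alpha^{3}}{3(n+\beta+1)^{2}}}_{=:T_{3}}.
\]

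Next I would bound each piece by an expression of the form $(\text{const})/(n+\beta+1)$. For $T_{1}(x)$, the inequality $x(1-x)\le 1/4$ together with $n^{2}\le n(n+k)$ (valid since $k\ge 0$) and $n\le n+\beta+1$ yields $T_{1}(x)\le(k+1)/[4(n+\beta+1)]$. For $T_{2}(x)$, note that the quadratic $(\beta+1)^{2}x^{2}-(1+2\alpha)(\beta+1)x$ is convex in $x$, so its maximum on $[0,1]$ is attained at an endpoint: the value at $x=0$ is $0$ and at $x=1$ is $(\beta+1)(\beta-2\alpha)$; hence
\[
T_{2}(x)\le\frac{\max\{0,(\beta+1)(\beta-2\alpha)\}}{(n+\beta+1)^{2}}\le\frac{(\beta+1)\beta}{(n+\beta+1)^{2}}\le\frac{\beta}{n+\beta+1},
\]
where the middle inequality uses $(\beta-2\alpha)_{+}\le\beta$ and the last uses $\beta+1\le n+\beta+1$. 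For $T_{3}$, the estimate $(n+\beta+1)^{2}\ge n+\beta+1$ gives $T_{3}\le[(\alpha+1)^{3}-\alpha^{3}]/[3(n+\beta+1)]$.

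Summing the three bounds and using $\beta\le\beta+2\alpha$ (since $\alpha\ge 0$) delivers
\[
K_{n}^{(\alpha,\beta,k)}((e_{1}-x)^{2};x)\le\frac{1}{n+\beta+1}\left(\frac{k+1}{4}+\beta+\frac{(\alpha+1)^{3}-\alpha^{3}}{3}\right)\le\xi_{n,k}^{\alpha,\beta},
\]
which is exactly the assertion of the lemma. The only delicate point in this plan is the case analysis inside $T_{2}$: in the regime $\beta<2\alpha$ the endpoint value $(\beta+1)(\beta-2\alpha)$ is negative, so the maximum over $[0,1]$ reduces to $T_{2}(0)=0$, while in the regime $\beta\ge 2\alpha$ the endpoint $x=1$ dominates; one must verify that the uniform bound $(\beta+1)\beta/(n+\beta+1)^{2}$ holds in both cases so the rest of the estimate is unaffected by the sign of $\beta-2\alpha$.
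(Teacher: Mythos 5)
Your proposal is correct and follows essentially the same route as the paper: both start from the explicit expression in Corollary \ref{centkant2}, regroup so that the dominant term is $(k+1)n^{2}x(1-x)/\bigl((n+\beta+1)^{2}(n+k)\bigr)$, and then apply the elementary bounds $x(1-x)\le 1/4$, $n^{2}\le (n+k)(n+\beta+1)$ and $\beta+1\le n+\beta+1$ term by term. The only cosmetic difference is that the paper absorbs $-(\beta+1)^{2}$ into the $x(1-x)$ factor and bounds $(\beta-2\alpha)x\le \beta+2\alpha$, whereas you handle the $\beta$-quadratic by convexity at the endpoints; both land on the same constant $\xi_{n,k}^{\alpha,\beta}$.
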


\begin{proof}
From Corollary \ref{centkant}, it follows%
\begin{align*}
K_{n}^{\left(  \alpha,\beta,k\right)  }\left(  \left(  e_{1}-x\right)
^{2};x\right)   &  =\frac{1}{\left(  n+\beta+1\right)  ^{2}}\left \{  x\left(
1-x\right)  \left(  \frac{n^{2}\left(  k+1\right)  }{n+k}-\left(
\beta+1\right)  ^{2}\right)  \right. \\
&  +\left.  \left(  \beta+1\right)  \left(  \beta-2\alpha \right)
x+\frac{\left(  \alpha+1\right)  ^{3}-\alpha^{3}}{3}\right \} \\
&  \leq \frac{1}{n+\beta+1}\left \{  \left(  k+1\right)  x\left(  1-x\right)
+\frac{\left(  \beta+1\right)  }{n+\beta+1}\left(  \beta+2\alpha \right)
x+\frac{\left(  \alpha+1\right)  ^{3}-\alpha^{3}}{3\left(  n+\beta+1\right)
}\right \} \\
&  \leq \frac{1}{n+\beta+1}\left \{  \left(  k+1\right)  x\left(  1-x\right)
+\left(  \beta+2\alpha \right)  +\frac{\left(  \alpha+1\right)  ^{3}-\alpha
^{3}}{3}\right \} \\
&  \leq \frac{1}{n+\beta+1}\left \{  \frac{k+1}{4}+\beta+2\alpha+\frac{\left(
\alpha+1\right)  ^{3}-\alpha^{3}}{3}\right \}  .
\end{align*}

\end{proof}

\begin{corollary}
\label{t} Taking into account Corollary \ref{centkant}, we get
the following limits as follows,
\begin{align}
\lim_{n\rightarrow \infty}nK_{n}^{\left(  \alpha,\beta,k\right)  }\left(
e_{1}-x;x\right)   &  =\alpha+\frac{1}{2}-\left(  \beta+1\right)
x,\label{limkant1}\\
\lim_{n\rightarrow \infty}nK_{n}^{\left(  \alpha,\beta,k\right)  }\left(
\left(  e_{1}-x\right)  ^{2};x\right)   &  =\left(  k+1\right)  x\left(
1-x\right)  ,\label{limkant2}\\
\lim_{n\rightarrow \infty}n^{2}K_{n}^{\left(  \alpha,\beta,k\right)  }\left(
\left(  e_{1}-x\right)  ^{4};x\right)   &  =3\left(  k+1\right)  ^{2}%
x^{2}\left(  1-x\right)  ^{2}. \label{limkant3}%
\end{align}

\end{corollary}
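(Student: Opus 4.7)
The plan is to deduce the three limits from the explicit closed-form expressions already available in Lemma~\ref{testkant} and Corollary~\ref{centkant}, together with the fourth central moment of the ``non-Kantorovich'' operator $P_{n,k}^{\langle k/n\rangle}$ recorded in Corollary~\ref{j}. The first two identities are direct; the third is the main obstacle and will be reduced to a known leading term.

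First I would read off~(\ref{centkant1}) and compute
\[
nK_{n}^{(\alpha,\beta,k)}(e_1-x;x)=\frac{n(2\alpha+1)}{2(n+\beta+1)}-\frac{n(\beta+1)}{n+\beta+1}x,
\]
which tends to $\alpha+\tfrac12-(\beta+1)x$ as $n\to\infty$, proving~(\ref{limkant1}). Similarly, multiplying the expression~(\ref{centkant2}) by $n$ and letting $n\to\infty$, the $x^{2}$-coefficient has leading behaviour $\frac{n[(\beta+1)^{2}(n+k)-(k+1)n^{2}]}{(n+\beta+1)^{2}(n+k)}\to -(k+1)$, the $x$-coefficient tends to $(k+1)$, and the constant contribution vanishes. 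Adding these gives $(k+1)x-(k+1)x^{2}=(k+1)x(1-x)$, establishing~(\ref{limkant2}).

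The harder part is~(\ref{limkant3}), since the fourth central moment was not written out explicitly in Corollary~\ref{centkant}. My plan is to use the representation
\[
t-x=\frac{m-nx}{n+\beta+1}+\frac{\alpha+u-(\beta+1)x}{n+\beta+1},\qquad u\in[0,1],
\]
obtained from the substitution $t=(m+\alpha+u)/(n+\beta+1)$ inside the integral defining $K_{n}^{(\alpha,\beta,k)}$. Writing $A_{m}:=(m-nx)/(n+\beta+1)$ and $B(u):=(\alpha+u-(\beta+1)x)/(n+\beta+1)$, the binomial expansion $(A_{m}+B(u))^{4}$ and Fubini give
\[
K_{n}^{(\alpha,\beta,k)}\!\left((e_{1}-x)^{4};x\right)=\sum_{j=0}^{4}\binom{4}{j}\!\left(\frac{n}{n+\beta+1}\right)^{\!j}\!P_{n,k}^{\langle k/n\rangle}\!\left((e_{1}-x)^{j};x\right)\!\int_{0}^{1}\!B(u)^{4-j}du.
\]
Since $\int_{0}^{1}B(u)^{4-j}du=O\bigl(n^{-(4-j)}\bigr)$, the orders of magnitude are $O(n^{-4}),O(n^{-4}),O(n^{-3}),O(n^{-4}),O(n^{-2})$ for $j=0,1,2,3,4$, using the well-known decay $P_{n,k}^{\langle k/n\rangle}((e_{1}-x)^{j};x)=O(n^{-\lceil j/2\rceil})$ which follows from (\ref{cent1}) and the third central moment formula in Corollary~\ref{j}. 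Thus only the $j=4$ term survives after multiplication by $n^{2}$, and
\[
n^{2}K_{n}^{(\alpha,\beta,k)}\!\left((e_{1}-x)^{4};x\right)=\left(\frac{n}{n+\beta+1}\right)^{4}n^{2}P_{n,k}^{\langle k/n\rangle}\!\left((e_{1}-x)^{4};x\right)+o(1).
\]

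It remains to compute $\lim_{n\to\infty}n^{2}P_{n,k}^{\langle k/n\rangle}((e_{1}-x)^{4};x)$ from~(\ref{cent3}). The dominant piece is the first summand, whose numerator expands as $3(k+1)n\bigl[(1+k)n+O(1)\bigr](x(1-x))^{2}$ and whose denominator is $n(n+k)(n+2k)(n+3k)\sim n^{4}$. Hence that summand is asymptotic to $3(k+1)^{2}(x(1-x))^{2}/n^{2}$, while the second summand is $O(n^{-2})$ with a strictly smaller leading constant (in fact $O(n^{-3})$ after cancellation). Putting this together with $(n/(n+\beta+1))^{4}\to 1$ yields~(\ref{limkant3}). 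The bookkeeping of the $O(1)$ terms in Corollary~\ref{j} is where I expect the most care to be required, but no new idea is needed beyond collecting leading powers of $n$.
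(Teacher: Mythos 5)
Your proof is correct. The first two limits are handled exactly as the paper intends: read off the closed forms \eqref{centkant1} and \eqref{centkant2}, multiply by $n$, and collect leading coefficients. For the third limit your route genuinely differs from (and in fact completes) the paper's. The paper justifies \eqref{limkant3} only by pointing to Corollary~\ref{centkant}, which records merely $K_{n}^{\left(\alpha,\beta,k\right)}\left(\left(e_{1}-x\right)^{4};x\right)=O\left(1/n^{2}\right)$; that order estimate cannot produce the precise constant $3\left(k+1\right)^{2}x^{2}\left(1-x\right)^{2}$, so the implicit intended argument is a direct expansion of $\left(e_{1}-x\right)^{4}$ against the lengthy explicit moments of Lemma~\ref{testkant}, in particular \eqref{testkant5}. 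Your substitution $t=\left(m+\alpha+u\right)/\left(n+\beta+1\right)$ and the resulting binomial identity
\[
K_{n}^{\left(\alpha,\beta,k\right)}\left(\left(e_{1}-x\right)^{4};x\right)=\sum_{j=0}^{4}\binom{4}{j}\left(\frac{n}{n+\beta+1}\right)^{j}P_{n,k}^{\left\langle \frac{k}{n}\right\rangle}\left(\left(e_{1}-x\right)^{j};x\right)\int_{0}^{1}B\left(u\right)^{4-j}\,du
\]
are valid (the factorization works because $B\left(u\right)$ is independent of $m$), and they reduce everything to the asymptotics of Corollary~\ref{j}, avoiding \eqref{testkant5} entirely; this is the cleaner argument. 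Two small inaccuracies, neither of which affects the conclusion: the $j=3$ term has order $O\left(n^{-2}\right)\cdot O\left(n^{-1}\right)=O\left(n^{-3}\right)$, not $O\left(n^{-4}\right)$ as you list (and the $j=1$ term is exactly zero since $P_{n,k}^{\left\langle \frac{k}{n}\right\rangle}\left(e_{1};x\right)=x$); and the second summand of \eqref{cent3} is $O\left(n^{-3}\right)$ simply because its numerator is linear in $n$ while the denominator is quartic --- no cancellation is involved. All terms with $j\le 3$ remain $o\left(n^{-2}\right)$, so the conclusion stands.
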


\subsection{Convergence Properties of $K_{n}^{\left(  \alpha,\beta,k\right)
}$}

\begin{theorem}
Let $n\in%
\mathbb{N}
$,$\ k\geq0.\ $Then for every $f\in C\left[  0,1\right]  ,$ we have%
\begin{equation}
\lim_{n\rightarrow \infty}K_{n}^{\left(  \alpha,\beta,k\right)  }\left(
f;x\right)  =f\left(  x\right)  \label{korkant}%
\end{equation}
uniformly in $\left[  0,1\right]  .$
\end{theorem}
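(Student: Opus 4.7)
The plan is a direct application of the Bohman--Korovkin theorem. First I would note that $K_{n}^{\left(\alpha,\beta,k\right)}$ is evidently a positive linear operator on $C[0,1]$, since the Pochhammer $k$-symbols $\left(nx\right)_{m,k}$ and $\left(n-nx\right)_{n-m,k}$ are nonnegative for $x\in[0,1]$ (as $0\le m\le n$) and the integral of a nonnegative continuous function over a subinterval of $[0,1]$ is nonnegative.

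Next I would verify the three Korovkin test-function conditions using Lemma \ref{testkant}. Condition (i) is immediate from (\ref{testkant1}): $K_{n}^{\left(\alpha,\beta,k\right)}(e_{0};x)=1=e_{0}(x)$. For condition (ii), (\ref{testkant2}) gives
\[
K_{n}^{\left(\alpha,\beta,k\right)}(e_{1};x)-x=\frac{2\alpha+1}{2(n+\beta+1)}-\frac{\beta+1}{n+\beta+1}x,
\]
whose supremum over $x\in[0,1]$ is bounded by $\frac{2\alpha+1}{2(n+\beta+1)}+\frac{\beta+1}{n+\beta+1}$, which tends to $0$ as $n\to\infty$. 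For condition (iii), using (\ref{testkant3}), the difference $K_{n}^{\left(\alpha,\beta,k\right)}(e_{2};x)-x^{2}$ is a rational function in $n$ whose coefficients (as polynomials in $x$ on the compact interval $[0,1]$) are $O(1/n)$; equivalently, one can read off this uniform convergence directly from Corollary \ref{centkant} and Lemma \ref{m}, since
\[
K_{n}^{\left(\alpha,\beta,k\right)}\bigl((e_{1}-x)^{2};x\bigr)\le \xi_{n,k}^{\alpha,\beta}=O(1/n),
\]
and $K_{n}^{\left(\alpha,\beta,k\right)}(e_{2};x)-x^{2}=K_{n}^{\left(\alpha,\beta,k\right)}\bigl((e_{1}-x)^{2};x\bigr)+2x\bigl(K_{n}^{\left(\alpha,\beta,k\right)}(e_{1};x)-x\bigr)$, so uniform convergence of the right-hand side to $0$ is immediate from steps (i) and (ii) already verified.

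Having established that $K_{n}^{\left(\alpha,\beta,k\right)}(e_{i};\cdot)\to e_{i}$ uniformly on $[0,1]$ for $i=0,1,2$, the Bohman--Korovkin theorem applied to the sequence of positive linear operators $\{K_{n}^{\left(\alpha,\beta,k\right)}\}_{n\in\mathbb{N}}$ yields (\ref{korkant}) for every $f\in C[0,1]$. There is no real obstacle here; the whole content of the proof is packaged inside Lemma \ref{testkant}, and the only care needed is to notice that the ``extra'' terms arising from $\alpha,\beta,k$ all carry a factor of $\frac{1}{n+\beta+1}$ (or higher order), so they do not spoil the uniform convergence on the compact interval $[0,1]$.
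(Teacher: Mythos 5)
Your proposal is correct and follows exactly the same route as the paper: verify via Lemma \ref{testkant} that $K_{n}^{\left(\alpha,\beta,k\right)}(e_{i};\cdot)\to e_{i}$ uniformly on $[0,1]$ for $i=0,1,2$ and invoke the Korovkin theorem. Your version merely spells out the positivity of the operators and the $O(1/n)$ estimates that the paper leaves implicit.
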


\begin{proof}
Making use of the results in Lemma \ref{testkant}, we deduce that
\[
\lim_{n\rightarrow \infty}K_{n}^{\left(  \alpha,\beta,k\right)  }\left(
e_{i}(t);x\right)  =x^{i}\  \ ,\  \ i=0,1,2
\]
uniformly in $\left[  0,1\right]  .$ According to Korovkin's theorem, one can
easily get the desired result.
\end{proof}

Now, we are in a position to give the theorems of the rate of convergence of
the operators by virtue of classical modulus of continuity.

For $\delta>0,$ the modulus of continuity of $f$ denoted by $w\left(
f;\delta \right)  $ is defined to be
\begin{equation}
\omega \left(  f;\delta \right)  =\sup_{\substack{x,t\in \left[  a,b\right]
\\ \left \vert t-x\right \vert \leq \delta}}\left \vert f\left(  t\right)
-f\left(  x\right)  \right \vert . \label{mod}%
\end{equation}

Then$,$ for any $\delta>0$ and each $x\in \left[  a,b\right]  $, it is well
known that%
\begin{equation}
\left \vert f\left(  t\right)  -f\left(  x\right)  \right \vert \leq \left(
\frac{\left \vert t-x\right \vert }{\delta}+1\right)  \omega \left(
f;\delta \right)  . \label{mod1}%
\end{equation}

\begin{theorem}
Let $n\in%
\mathbb{N}
$,$\ k\geq0.\ $Then for every $f\in C\left[  0,1\right]  ,$ we have the
following result%
\begin{equation}
\left \vert K_{n}^{\left(  \alpha,\beta,k\right)  }\left(  f;x\right)
-f\left(  x\right)  \right \vert \leq2\omega \left(  f;\sqrt{K_{n}^{\left(
\alpha,\beta,k\right)  }\left(  \left(  e_{1}-x\right)  ^{2};x\right)
}\right)  , \label{modkant}%
\end{equation}
where $\omega \left(  f;.\right)  $ is modulus of continuity defined by
$\left(  \text{\ref{mod}}\right)  $.
\end{theorem}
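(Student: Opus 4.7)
The plan is to follow the classical Popoviciu--Shisha--Mond template for estimating a positive linear operator's error by the modulus of continuity. First I would observe that since $K_n^{(\alpha,\beta,k)}(e_0;x)=1$ by \eqref{testkant1}, linearity lets me write
\[
K_n^{(\alpha,\beta,k)}(f;x) - f(x) = K_n^{(\alpha,\beta,k)}\bigl(f(t)-f(x);x\bigr),
\]
and since the operator is positive (the Pochhammer $k$-symbols $(nx)_{m,k}$ and $(n-nx)_{n-m,k}$ are nonnegative for $x\in[0,1]$ and $k\geq 0$, and the integral is taken against a nonnegative weight), the triangle inequality passes through to give
\[
\bigl|K_n^{(\alpha,\beta,k)}(f;x) - f(x)\bigr| \leq K_n^{(\alpha,\beta,k)}\bigl(|f(t)-f(x)|;x\bigr).
\]

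Next I would apply the pointwise bound \eqref{mod1}, namely $|f(t)-f(x)|\leq \bigl(1+|t-x|/\delta\bigr)\omega(f;\delta)$, inside the operator. Using linearity, positivity, and $K_n^{(\alpha,\beta,k)}(e_0;x)=1$ once more, this yields
\[
\bigl|K_n^{(\alpha,\beta,k)}(f;x)-f(x)\bigr| \leq \omega(f;\delta)\left\{1+\frac{1}{\delta}\,K_n^{(\alpha,\beta,k)}(|e_1-x|;x)\right\}.
\]
The term $K_n^{(\alpha,\beta,k)}(|e_1-x|;x)$ is then controlled by the Cauchy--Schwarz inequality for positive linear functionals,
\[
K_n^{(\alpha,\beta,k)}(|e_1-x|;x) \leq \sqrt{K_n^{(\alpha,\beta,k)}\bigl((e_1-x)^2;x\bigr)}\cdot\sqrt{K_n^{(\alpha,\beta,k)}(e_0;x)} = \sqrt{K_n^{(\alpha,\beta,k)}\bigl((e_1-x)^2;x\bigr)}.
\]

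Finally I would choose $\delta = \sqrt{K_n^{(\alpha,\beta,k)}((e_1-x)^2;x)}$ so that the quantity in braces collapses to $2$, yielding exactly \eqref{modkant}. There is no real obstacle here: the second central moment is already in hand from Corollary~\ref{centkant}, and every ingredient (positivity, preservation of constants, Cauchy--Schwarz) is standard. The only sanity check worth carrying out is that $K_n^{(\alpha,\beta,k)}$ is indeed a positive linear operator, which follows immediately from the nonnegativity of the Pochhammer $k$-symbols on $[0,1]$ together with the nonnegativity of the integrand when $f\geq 0$.
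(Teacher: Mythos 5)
Your proposal is correct and follows essentially the same route as the paper: bound $|f(t)-f(x)|$ via the standard modulus-of-continuity inequality \eqref{mod1}, control the first absolute central moment by the second via Cauchy--Schwarz, and choose $\delta=\sqrt{K_n^{(\alpha,\beta,k)}((e_1-x)^2;x)}$. The only cosmetic difference is that you apply Cauchy--Schwarz once at the level of the positive linear functional, whereas the paper applies it in two stages (first to the inner integral, then to the sum over $m$); the estimates obtained are identical.
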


\begin{proof}
In virtue of the definition of the operators $K_{n}^{\left(  \alpha
,\beta,k\right)  }$given by $\left(  \text{\ref{PolKant}}\right)  $ and by the
property of modulus of continuity $\left(  \text{\ref{mod1}}\right)  $, we
obtain
\begin{align}
&  \left \vert K_{n}^{\left(  \alpha,\beta,k\right)  }\left(  f;x\right)
-f\left(  x\right)  \right \vert \nonumber \\
&  \leq \frac{n+\beta+1}{\left(  n\right)  _{n,k}}\sum_{m=0}^{n}\left(
\begin{array}
[c]{c}%
n\\
m
\end{array}
\right)  \left(  nx\right)  _{m,k}\left(  n-nx\right)  _{n-m,k}\int
\limits_{\frac{m+\alpha}{n+\beta+1}}^{\frac{m+\alpha+1}{n+\beta+1}}\left \vert
f\left(  t\right)  -f\left(  x\right)  \right \vert dt\nonumber \\
&  \leq \frac{n+\beta+1}{\left(  n\right)  _{n,k}}\sum_{m=0}^{n}\left(
\begin{array}
[c]{c}%
n\\
m
\end{array}
\right)  \left(  nx\right)  _{m,k}\left(  n-nx\right)  _{n-m,k}\int
\limits_{\frac{m+\alpha}{n+\beta+1}}^{\frac{m+\alpha+1}{n+\beta+1}}\left(
\frac{\left \vert t-x\right \vert }{\delta}+1\right)  \omega \left(
f;\delta \right)  dt\nonumber \\
&  =\left[  \frac{1}{\delta}\left(  \frac{n+\beta+1}{\left(  n\right)  _{n,k}%
}\sum_{m=0}^{n}\left(
\begin{array}
[c]{c}%
n\\
m
\end{array}
\right)  \left(  nx\right)  _{m,k}\left(  n-nx\right)  _{n-m,k}\int
\limits_{\frac{m+\alpha}{n+\beta+1}}^{\frac{m+\alpha+1}{n+\beta+1}}\left \vert
t-x\right \vert dt\right)  +1\right]  \omega \left(  f;\delta \right)  .
\label{modkant1}%
\end{align}
Using Cauchy Schwarz inequality, the integral can be written as follows%
\begin{equation}
\int \limits_{\frac{m+\alpha}{n+\beta+1}}^{\frac{m+\alpha+1}{n+\beta+1}%
}\left \vert t-x\right \vert dt\leq \frac{1}{\sqrt{n+\beta+1}}\left(
\int \limits_{\frac{m+\alpha}{n+\beta+1}}^{\frac{m+\alpha+1}{n+\beta+1}}\left(
t-x\right)  ^{2}dt\right)  ^{\frac{1}{2}}. \label{modkant2}%
\end{equation}
Taking into consideration again Cauchy Schwarz inequality for summation and
combining it with $\left(  \text{\ref{modkant2}}\right)  $, we reach
\begin{align}
&  \sum_{m=0}^{n}\left(
\begin{array}
[c]{c}%
n\\
m
\end{array}
\right)  \left(  nx\right)  _{m,k}\left(  n-nx\right)  _{n-m,k}\int
\limits_{\frac{m+\alpha}{n+\beta+1}}^{\frac{m+\alpha+1}{n+\beta+1}}\left \vert
t-x\right \vert dt\nonumber \\
&  \leq \sum_{m=0}^{n}\left(
\begin{array}
[c]{c}%
n\\
m
\end{array}
\right)  \left(  nx\right)  _{m,k}\left(  n-nx\right)  _{n-m,k}\left(
\frac{1}{n+\beta+1}\int \limits_{\frac{m+\alpha}{n+\beta+1}}^{\frac{m+\alpha
+1}{n+\beta+1}}\left(  t-x\right)  ^{2}dt\right)  ^{1/2}\nonumber \\
&  \leq \frac{\left(  n\right)  _{n,k}}{n+\beta+1}\left(  K_{n}^{\left(
\alpha,\beta,k\right)  }\left(  \left(  e_{1}-x\right)  ^{2};x\right)
\right)  ^{1/2}. \label{modkant3}%
\end{align}
Finally, in view of the $\left(  \text{\ref{modkant3}}\right)  ,\  \left(
\text{\ref{modkant1}}\right)  $ can be expressed as
\[
\left \vert K_{n}^{\left(  \alpha,\beta,k\right)  }\left(  f;x\right)
-f\left(  x\right)  \right \vert \leq \left[  \frac{1}{\delta}\left(
K_{n}^{\left(  \alpha,\beta,k\right)  }\left(  \left(  e_{1}-x\right)
^{2};x\right)  \right)  ^{1/2}+1\right]  \omega \left(  f;\delta \right)  .
\]
Choosing $\delta=\left(  K_{n}^{\left(  \alpha,\beta,k\right)  }\left(
\left(  e_{1}-x\right)  ^{2};x\right)  \right)  ^{1/2},\ $we get desired
result as%
\[
\left \vert K_{n}^{\left(  \alpha,\beta,k\right)  }\left(  f;x\right)
-f\left(  x\right)  \right \vert \leq2\omega \left(  f;\sqrt{K_{n}^{\left(
\alpha,\beta,k\right)  }\left(  \left(  e_{1}-x\right)  ^{2};x\right)
}\right)  .
\]

\end{proof}

\begin{theorem}
If $f\in C^{1}\left[  0,1\right]  ,$ then
\[
\left \vert K_{n}^{\left(  \alpha,\beta,k\right)  }\left(  f;x\right)
-f\left(  x\right)  \right \vert \leq \nu_{1}\left \vert f^{\prime}\left(
x\right)  \right \vert +2\sqrt{\nu_{2}}~\omega \left(  f^{\prime};\sqrt{\nu_{2}%
}\right)
\]
where $\nu_{1}=K_{n}^{\left(  \alpha,\beta,k\right)  }\left(  e_{1}%
-x;x\right)  \ $and $\nu_{2}=K_{n}^{\left(  \alpha,\beta,k\right)  }\left(
\left(  e_{1}-x\right)  ^{2};x\right)  .$
\end{theorem}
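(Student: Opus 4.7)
The plan is to combine the standard linearization trick for $C^1$ functions with Cauchy--Schwarz and a judicious choice of $\delta$ in the modulus of continuity estimate. First, since $K_n^{(\alpha,\beta,k)}(e_0;x)=1$ by \eqref{testkant1}, I can write
\[
K_n^{(\alpha,\beta,k)}(f;x)-f(x)=K_n^{(\alpha,\beta,k)}\bigl(f(t)-f(x);x\bigr).
\]
Then I apply the identity $f(t)-f(x)=(t-x)f'(x)+\int_{x}^{t}\bigl(f'(u)-f'(x)\bigr)du$, or equivalently the mean value theorem $f(t)-f(x)=(t-x)f'(\xi)$ followed by the decomposition $(t-x)f'(\xi)=(t-x)f'(x)+(t-x)\bigl(f'(\xi)-f'(x)\bigr)$ with $\xi$ between $t$ and $x$.

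Linearity of $K_n^{(\alpha,\beta,k)}$ then separates the estimate into two pieces:
\[
\bigl|K_n^{(\alpha,\beta,k)}(f;x)-f(x)\bigr|\le |f'(x)|\,\bigl|K_n^{(\alpha,\beta,k)}(e_1-x;x)\bigr|+K_n^{(\alpha,\beta,k)}\!\left(|t-x|\cdot\bigl|f'(\xi)-f'(x)\bigr|;x\right).
\]
The first term is exactly $\nu_1 |f'(x)|$ (up to absorbing an absolute value into $\nu_1$). For the second term, I use $|\xi-x|\le|t-x|$ together with the standard property \eqref{mod1} of the modulus of continuity to get $|f'(\xi)-f'(x)|\le\bigl(1+|t-x|/\delta\bigr)\omega(f';\delta)$ for any $\delta>0$.

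Substituting this bound and using linearity again yields
\[
K_n^{(\alpha,\beta,k)}\!\left(|t-x|\cdot\bigl|f'(\xi)-f'(x)\bigr|;x\right)\le\omega(f';\delta)\left[K_n^{(\alpha,\beta,k)}(|e_1-x|;x)+\tfrac{1}{\delta}K_n^{(\alpha,\beta,k)}\bigl((e_1-x)^2;x\bigr)\right].
\]
Applying the Cauchy--Schwarz inequality for positive linear operators, namely $K_n^{(\alpha,\beta,k)}(|e_1-x|;x)\le\sqrt{K_n^{(\alpha,\beta,k)}((e_1-x)^2;x)}=\sqrt{\nu_2}$, reduces the bracket to $\sqrt{\nu_2}+\nu_2/\delta$. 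Choosing $\delta=\sqrt{\nu_2}$ collapses this to $2\sqrt{\nu_2}$, and the stated inequality follows immediately.

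The main obstacle is essentially bookkeeping: ensuring the correct use of Cauchy--Schwarz for the positive functional $K_n^{(\alpha,\beta,k)}$ and confirming that the mean-value decomposition remains compatible with the operator's integral structure (this is automatic here since $K_n^{(\alpha,\beta,k)}$ integrates only in $t$ on fixed subintervals). No new moment computations are required, as $\nu_1$ and $\nu_2$ are already available from Corollary~\ref{centkant}.
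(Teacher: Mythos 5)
Your proposal is correct and follows essentially the same route as the paper: the linearization $f(t)-f(x)=f'(x)(t-x)+\int_x^t(f'(u)-f'(x))\,du$, the standard modulus-of-continuity bound, Cauchy--Schwarz to replace $K_n^{(\alpha,\beta,k)}(|e_1-x|;x)$ by $\sqrt{\nu_2}$, and the choice $\delta=\sqrt{\nu_2}$. The only cosmetic difference is your remark about the mean value theorem as an alternative to the integral identity, and your (correct) observation that the first term should really carry $|\nu_1|$ rather than $\nu_1$, a detail the paper's statement also glosses over.
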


\begin{proof}
Let $f\in C^{1}\left[  0,1\right]  .$ For any $x,t\in \left[  0,1\right]  $%
\[
f\left(  t\right)  -f\left(  x\right)  =f^{\prime}\left(  x\right)  \left(
t-x\right)  +%
{\displaystyle \int \limits_{x}^{t}}
\left(  f^{\prime}\left(  y\right)  -f^{\prime}\left(  x\right)  \right)  dy.
\]
Applying $K_{n}^{\left(  \alpha,\beta,k\right)  }$ on the both side%
\[
K_{n}^{\left(  \alpha,\beta,k\right)  }\left(  f\left(  t\right)  -f\left(
x\right)  ;x\right)  =f^{\prime}\left(  x\right)  K_{n}^{\left(  \alpha
,\beta,k\right)  }\left(  e_{1}-x;x\right)  +K_{n}^{\left(  \alpha
,\beta,k\right)  }\left(  \left(
{\displaystyle \int \limits_{x}^{t}}
\left(  f^{\prime}\left(  y\right)  -f^{\prime}\left(  x\right)  \right)
dy\right)  ;x\right)  .
\]
By the property of modulus continuity (\ref{mod1}), we get%
\[
\left \vert
{\displaystyle \int \limits_{x}^{t}}
\left \vert f^{\prime}\left(  y\right)  -f^{\prime}\left(  x\right)
\right \vert dy\right \vert \leq \omega \left(  f^{\prime};\delta \right)  \left(
\frac{\left(  t-x\right)  ^{2}}{\delta}+\left \vert t-x\right \vert \right)  ,
\]
from which we conclude that%
\begin{align*}
\left \vert K_{n}^{\left(  \alpha,\beta,k\right)  }\left(  f;x\right)
-f\left(  x\right)  \right \vert  &  \leq \left \vert f^{\prime}\left(  x\right)
\right \vert \left \vert K_{n}^{\left(  \alpha,\beta,k\right)  }\left(
e_{1}-x;x\right)  \right \vert \\
&  +\left[  \frac{1}{\delta}K_{n}^{\left(  \alpha,\beta,k\right)  }\left(
\left(  e_{1}-x\right)  ^{2};x\right)  +K_{n}^{\left(  \alpha,\beta,k\right)
}\left(  \left \vert e_{1}-x\right \vert ;x\right)  \right]  \omega \left(
f^{\prime};\delta \right)  .
\end{align*}
By the Cauchy Schwarz inequality, we can write
\begin{align*}
&  \left \vert K_{n}^{\left(  \alpha,\beta,k\right)  }\left(  f;x\right)
-f\left(  x\right)  \right \vert \\
&  \leq \left \vert f^{\prime}\left(  x\right)  \right \vert \left \vert
K_{n}^{\left(  \alpha,\beta,k\right)  }\left(  e_{1}-x;x\right)  \right \vert
\\
&  +\left[  \frac{1}{\delta}\left(  K_{n}^{\left(  \alpha,\beta,k\right)
}\left(  \left(  e_{1}-x\right)  ^{2};x\right)  \right)  ^{1/2}+1\right]
\left(  K_{n}^{\left(  \alpha,\beta,k\right)  }\left(  \left(  e_{1}-x\right)
^{2};x\right)  \right)  ^{1/2}\omega \left(  f^{\prime};\delta \right)  .
\end{align*}
By taking $K_{n}^{\left(  \alpha,\beta,k\right)  }\left(  e_{1}-x;x\right)
=\nu_{1}\ $and $K_{n}^{\left(  \alpha,\beta,k\right)  }\left(  \left(
e_{1}-x\right)  ^{2};x\right)  =\nu_{2}$, and by choosing $\delta=\sqrt
{\nu_{2}}$, we arrive at the desired result.
\end{proof}

\begin{theorem}
\label{vo} Let $f$ be bounded on $\left[  0,1\right]  .$ Then for any $x\in \left(
0,1\right)  $ at which $f^{\prime},f^{\prime \prime}$ exist,\ we have%
\begin{equation}
\lim_{n\rightarrow \infty}n\left[  K_{n}^{\left(  \alpha,\beta,k\right)
}\left(  f;x\right)  -f\left(  x\right)  \right]  =\frac{1}{2}\left[  \left(
2\alpha+1-2\left(  \beta+1\right)  x\right)  f^{\prime}(x)+\left(  k+1\right)
x\left(  1-x\right)  f^{\prime \prime}(x)\right]  . \label{Vorokant}%
\end{equation}

\end{theorem}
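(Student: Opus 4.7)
The plan is to apply the standard Voronovskaja argument: expand $f$ by Taylor's formula around $x$, apply the linear operator $K_{n}^{(\alpha,\beta,k)}$ term by term, then identify the limits using the central moments already computed in Corollary~\ref{centkant} and Corollary~\ref{t}.

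First I would write, for $t\in[0,1]$ and $x\in(0,1)$ at which $f',f''$ exist,
\[
f(t)=f(x)+f'(x)(t-x)+\tfrac{1}{2}f''(x)(t-x)^{2}+\varepsilon(t,x)(t-x)^{2},
\]
where $\varepsilon(t,x)\to 0$ as $t\to x$ and $\varepsilon(\,\cdot\,,x)$ is bounded on $[0,1]$ (this is where boundedness of $f$ is used; set $\varepsilon(x,x)=0$). Applying $K_{n}^{(\alpha,\beta,k)}$, which is linear and preserves constants by \eqref{testkant1}, yields
\[
K_{n}^{(\alpha,\beta,k)}(f;x)-f(x)=f'(x)\,K_{n}^{(\alpha,\beta,k)}(e_{1}-x;x)+\tfrac{1}{2}f''(x)\,K_{n}^{(\alpha,\beta,k)}((e_{1}-x)^{2};x)+R_{n}(x),
\]
where $R_{n}(x):=K_{n}^{(\alpha,\beta,k)}(\varepsilon(t,x)(t-x)^{2};x)$.

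Next I would multiply by $n$ and invoke Corollary~\ref{t}: by \eqref{limkant1}, $n\,K_{n}^{(\alpha,\beta,k)}(e_{1}-x;x)\to \alpha+\tfrac{1}{2}-(\beta+1)x$, and by \eqref{limkant2}, $n\,K_{n}^{(\alpha,\beta,k)}((e_{1}-x)^{2};x)\to (k+1)x(1-x)$. Summing the two limits produces exactly the right-hand side of \eqref{Vorokant}, so the only remaining task is to show $n R_{n}(x)\to 0$.

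The main obstacle, as usual in Voronovskaja-type proofs, is the remainder. For this I would apply the Cauchy--Schwarz inequality for the positive linear functional $K_{n}^{(\alpha,\beta,k)}(\,\cdot\,;x)$:
\[
|n R_{n}(x)|\le n\sqrt{K_{n}^{(\alpha,\beta,k)}(\varepsilon^{2}(t,x);x)}\,\sqrt{K_{n}^{(\alpha,\beta,k)}((e_{1}-x)^{4};x)}.
\]
By \eqref{limkant3}, the factor $n\sqrt{K_{n}^{(\alpha,\beta,k)}((e_{1}-x)^{4};x)}$ stays bounded (its square tends to $3(k+1)^{2}x^{2}(1-x)^{2}$). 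Meanwhile $\varepsilon^{2}(\cdot,x)\in C[0,1]$ with $\varepsilon^{2}(x,x)=0$, so the uniform convergence theorem \eqref{korkant} gives $K_{n}^{(\alpha,\beta,k)}(\varepsilon^{2}(t,x);x)\to \varepsilon^{2}(x,x)=0$. Hence $nR_{n}(x)\to 0$, completing the proof. The delicate point to state carefully is the boundedness and vanishing-at-$x$ of $\varepsilon(\cdot,x)$, which is precisely what licenses using the Korovkin-type limit on $\varepsilon^{2}$ combined with the fourth-moment bound to kill the error term.
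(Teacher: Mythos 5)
Your argument is essentially identical to the paper's own proof: both expand $f$ by Taylor's formula with a remainder of the form $\eta(t,x)(t-x)^{2}$, invoke the limits \eqref{limkant1}, \eqref{limkant2} from Corollary \ref{t} for the linear and quadratic terms, and kill the remainder via the Cauchy--Schwarz inequality combined with the fourth-moment limit \eqref{limkant3} and the convergence of $K_{n}^{(\alpha,\beta,k)}(\eta^{2};x)$ to $0$. Your version is, if anything, slightly more explicit about where the boundedness of $f$ and the vanishing of $\varepsilon(\cdot,x)$ at $x$ are used, but the route is the same.
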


\begin{proof}
Let $x\in \left[  0,1\right]  .$ According to Taylor's series expansion of the
function $f$ at the point $x,$we can write%
\begin{equation}
f(s)=f(x)+f^{\prime}(x)(s-x)+\frac{1}{2}f^{\prime \prime}(x)(s-x)^{2}%
+\eta(s,x)(s-x)^{2} \label{Taylorkant}%
\end{equation}
where $\eta(s,x)\in C[0,1]\ $which satisfies%
\begin{equation}
\lim_{s\rightarrow x}\eta(s,x)=0. \label{remain}%
\end{equation}
Employing the operators $K_{n}^{\left(  \alpha,\beta,k\right)  }$ to the both
sides of $\left(  \text{\ref{Taylorkant}}\right)  $ and taking limit for
$n\rightarrow \infty,$ it follows that%
\begin{align*}
\lim_{n\rightarrow \infty}n\left(  K_{n}^{\left(  \alpha,\beta,k\right)
}\left(  f;x\right)  -f\left(  x\right)  \right)   &  =f^{\prime}%
(x)\lim_{n\rightarrow \infty}nK_{n}^{\left(  \alpha,\beta,k\right)  }\left(
s-x;x\right) \\
&  +\frac{1}{2}f^{\prime \prime}\left(  x\right)  \lim_{n\rightarrow \infty
}nK_{n}^{\left(  \alpha,\beta,k\right)  }\left(  \left(  s-x\right)
^{2};x\right) \\
&  +\lim_{n\rightarrow \infty}nK_{n}^{\left(  \alpha,\beta,k\right)  }\left(
\eta(s,x)(s-x)^{2};x\right)  .
\end{align*}
Applying the Cauchy-Schwarz inequality for the the last term$\ K_{n}^{\left(
\alpha,\beta,k\right)  }\left(  \eta(s,x)(s-x)^{2};x\right)  $, we have%
\[
K_{n}^{\left(  \alpha,\beta,k\right)  }\left(  \eta(s,x)(s-x)^{2};x\right)
\leq \sqrt{K_{n}^{\left(  \alpha,\beta,k\right)  }\left(  \eta^{2}%
(s,x);x\right)  }\sqrt{K_{n}^{\left(  \alpha,\beta,k\right)  }\left(
(s-x)^{4};x\right)  }.
\]
From the property of $\left(  \text{\ref{remain}}\right)  $ and from the limit
$\left(  \text{\ref{limkant3}}\right)  ,\ $we obtain%
\[
\lim_{n\rightarrow \infty}nK_{n}^{\left(  \alpha,\beta,k\right)  }\left(
\eta(s,x)(s-x)^{2};x\right)  =0.
\]
Finally taking into account $\left(  \text{\ref{limkant1}}\right)  $ and
$\left(  \text{\ref{limkant2}}\right)  ,\ $we arrive at%
\[
\lim_{n\rightarrow \infty}n\left(  K_{n}^{\left(  \alpha,\beta,k\right)
}\left(  f;x\right)  -f\left(  x\right)  \right)  =\frac{1}{2}\left[  \left(
2\alpha+1-2\left(  \beta+1\right)  x\right)  f^{\prime}(x)+\left(  k+1\right)
x\left(  1-x\right)  f^{\prime \prime}(x)\right]  ,
\]
which is required result.
\end{proof}

\begin{remark}
If we get $\alpha=\beta=0,~k=1$ in Theorem \ref{vo}, we obtain the Voronovskaja type
theorem given for the operators $D_{n}^{\ast \left(  \frac{1}{n}\right)  }$ by
Agrawal et al. \cite{AIK}.
\end{remark}

Now, we continue by analizing the approximation properties of the operators
$K_{n}^{\left(  \alpha,\beta,k\right)  }\ $taking into account the graphical examples.

\begin{example}
Let $f\left(  x\right)  =x^{3}\sin \left(  4\pi x\right)  ,\ n=50,$
$\alpha=\beta=0$ and $k=0.2.\ $Behaviours of the approximation for the
modified operators $K_{n}^{\left(  \alpha,\beta,k\right)  }$, the operators
$D_{n}^{\ast \left(  \frac{1}{n}\right)  }$ which was given in \cite{AIK} and
the classical Kantorovich operators $K_{n}\ $are illustrated in Figure 4. We
choose the same function in \cite{AIK} for the better comparison and in Figure
4, one can see that for $k=0.2$, $K_{n}^{\left(  \alpha,\beta,k\right)  }$
provides a better approximation than the operators $D_{n}^{\ast \left(
\frac{1}{n}\right)  }$ to the function $f$ but the classical Kantorovich
operators $K_{n}$ have better approximation than others.
\end{example}

\begin{figure}[pth]
	\includegraphics*[width=7cm]{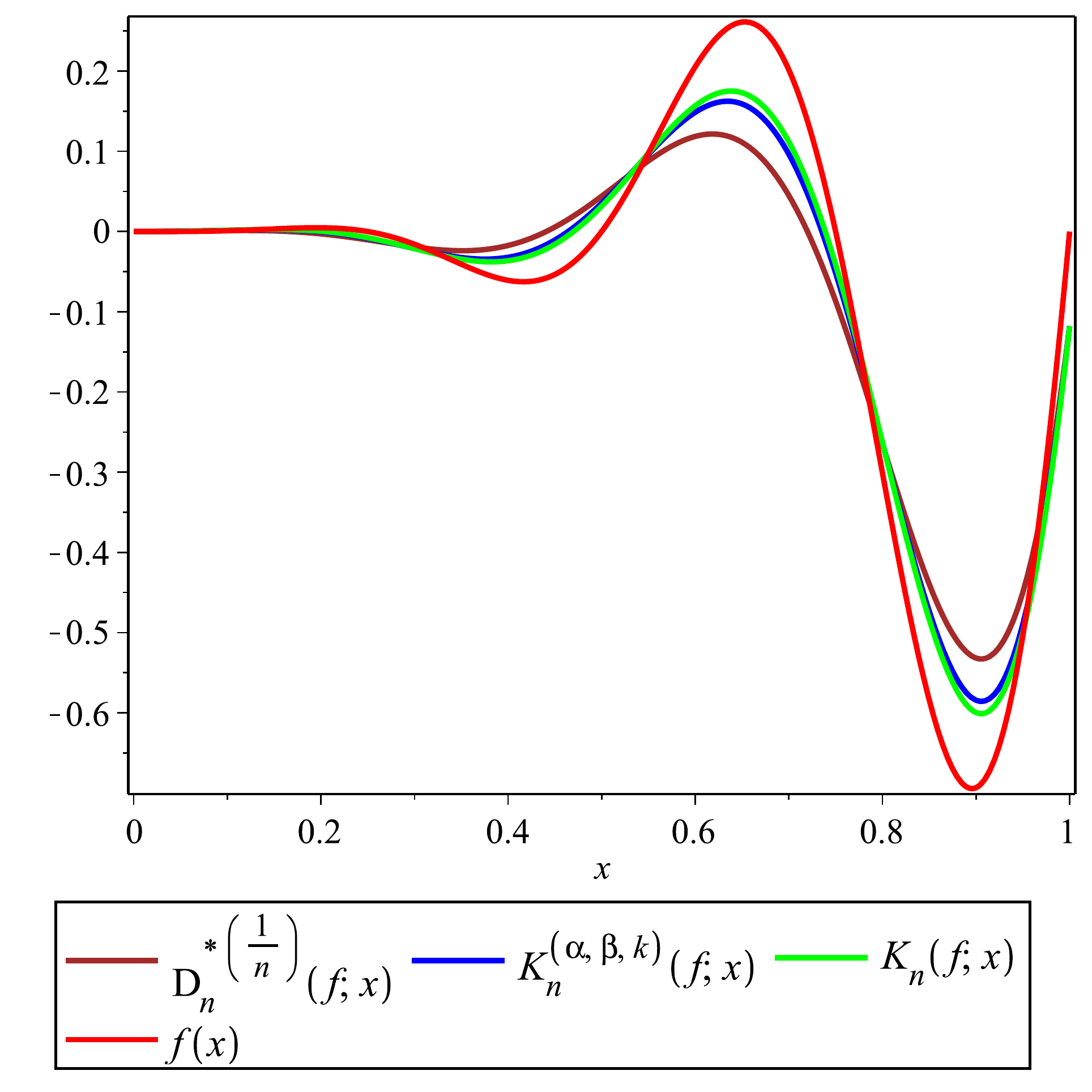}\caption{Convergence of
		$K_{n}^{\left(  \alpha,\beta,k\right)  }$, $D_{n}^{\ast \left(  \frac{1}%
			{n}\right)  }$ and $K_{n}\ $ to the function $f$ for $n=50$, $\alpha=\beta=0$
		and $k=0.2$}%
\end{figure}

\begin{example}
Consider $f\left(  x\right)  =2x^{2}\sin \left(  2\pi x\right)  $ and $k=0.3.$
Figure 5 presents the approximation process of the operators $K_{n}^{\left(
\alpha,\beta,k\right)  }$ for the special choices of $n=30,90,150$ and
$\alpha=\beta=0$.\ It is clearly seen that as the value of $n$ increases,\ the
approximation of the operators $K_{n}^{\left(  \alpha,\beta,k\right)  }\ $is
getting better.
\end{example}

\begin{figure}[pth]
\includegraphics*[width=7cm]{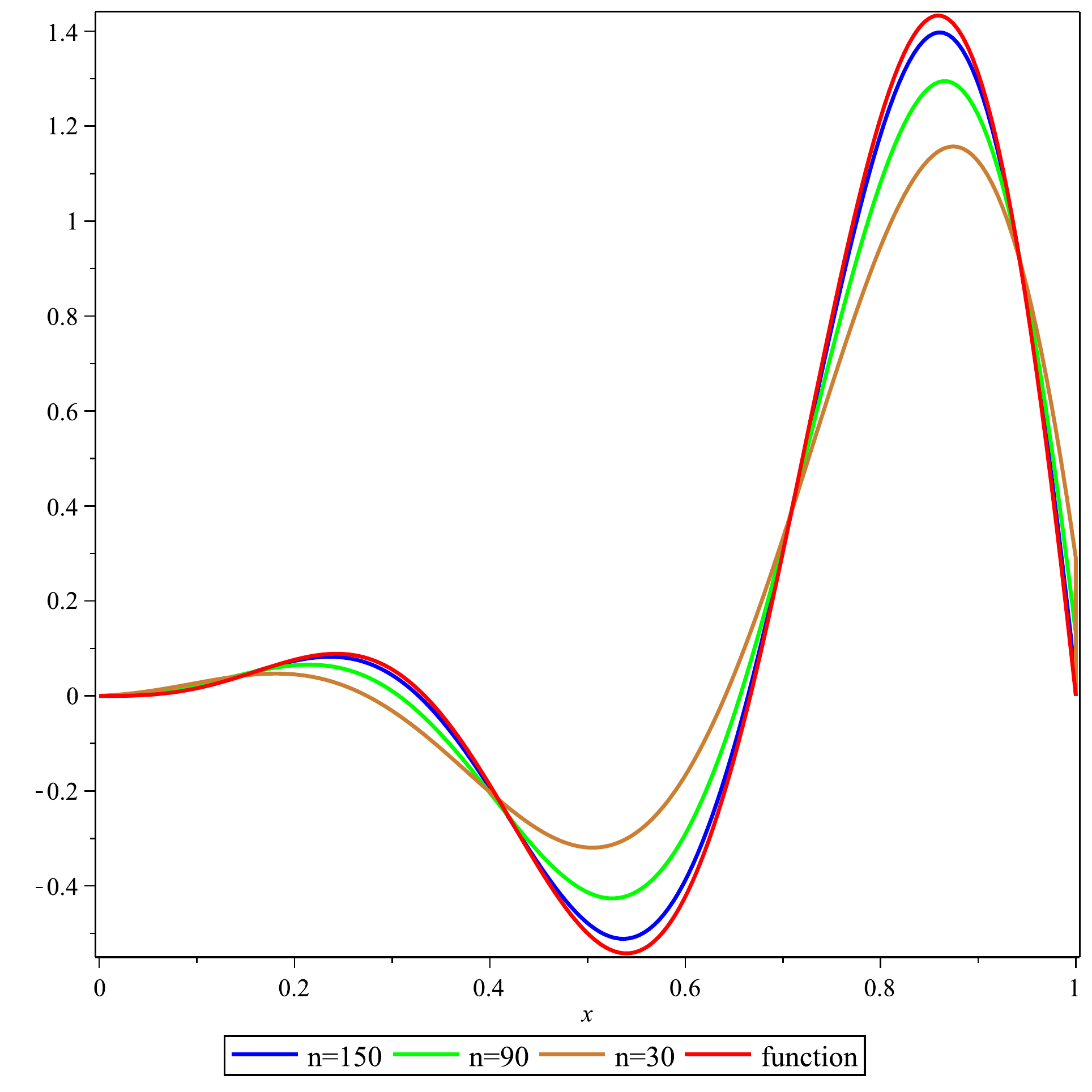}\caption{Approximation of the
operators $K_{n}^{\left(  \alpha,\beta,k\right)  }$ for $\alpha=\beta=0$ and
$k=0.3$ fixed and $n=30,90,150$}%
\end{figure}

\begin{example}
Let $f\left(  x\right)  =x^{5}\left(  x-\frac{1}{4}\right)  \sin \left(  \pi
x\right)  .$ Figure 6 shows the convergence of $K_{n}^{\left(  \alpha
,\beta,k\right)  }$ to the function $f\ $for $n=20\ $fixed and
$k=0.3,\ 0.6,\ 0.9,\ 1.2$ and $1.5$ and $\alpha=\beta=1.\ $As the value of $k$
decrease towards to zero,\ the approximation of the operators $K_{n}^{\left(
\alpha,\beta,k\right)  }\ $is getting better.
\end{example}

\begin{figure}[pth]
\includegraphics*[width=7cm]{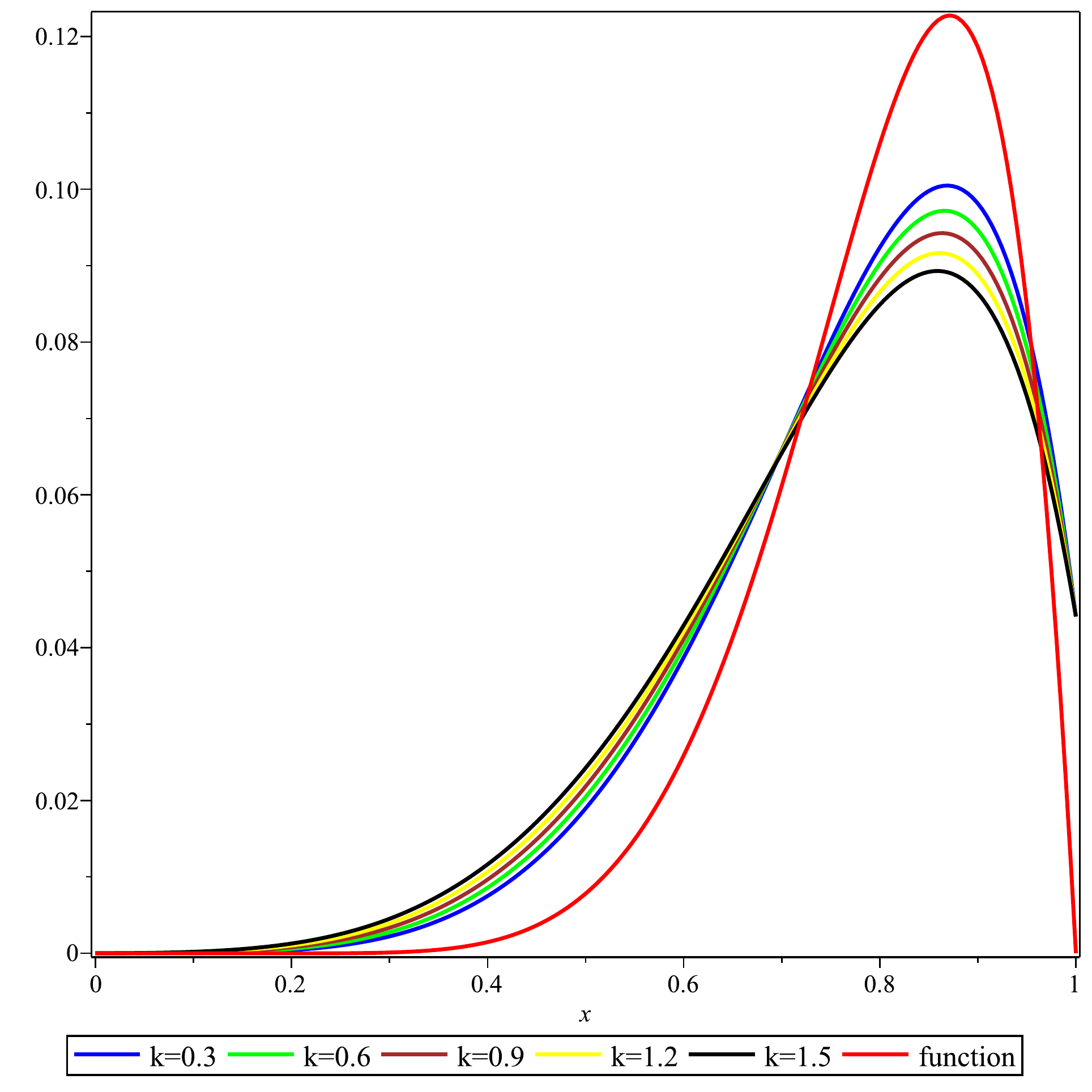}\caption{Approximation of the
operators $K_{n}^{\left(  \alpha,\beta,k\right)  }$ for $n=20, \alpha=\beta=1$
fixed and $k=0.3,\ 0.6,\ 0.9,\ 1.2,\ 1.5$ }%
\end{figure}

\section{Bivariate Generalization of the Lupa\c{s}-Kantorovich-Stancu type
operators by means of Pochhammer $k$-symbol}

In the year 2016, Agrawal et al. \cite{AIK} introduced bivariate Kantorovich
variant of the operators in view of the operators given by Lupa\c{s} and
Lupa\c{s} \cite{Lupas} and they investigated the rate of convergence by means
of the modulus of continuity and proved the Voronovskaja type asymptotic
theorem for the bivariate Lupa\c{s}-Kantorovich operators. Also in the same
year, Agrawal et al. \cite{Agrawal1} constructed the bivariate
Lupa\c{s}-Durrmeyer operators and they examined some approximation properties
of these operators using Peetre's K-functional and discussed the asymptotic
behaviour of the operators. Moreover, they considered the Generalized Boolean
Sum operators of Lupa\c{s}--Durrmeyer type operators and the rate of
convergence theorems for the operators were given in that study. In 2018,
inspired by the concept of \"{O}zarslan and Duman \cite{Ozars}, Kajla and
Miclaus \cite{Kajla} generalized the Lupa\c{s}-Kantorovich type operators
based on Polya distribution. They gave the degree of approximation and
discussed the Voronovskaja type theorem for the bivariate operators.
Subsequently, Agrawal and Gupta \cite{Agrawal} introduced the Kantorovich
variant of $q$-analogue of the Stancu operators defined by Nowak \cite{Nowak}
and they presented bivariate operators in their work. Recently, Rahman et al.
\cite{RMK} has presented the bivariate extension of Kantorovich variant of
Lupa\c{s} operators based on Polya distribution with shifted knots $\alpha
_{i},\  \beta_{i},\ i=1,2$. They proposed some aproximation properties of these
operators by means of the modulus of continuity, Peetre's $K$-functional and
provided some illustrative graphics and numerical examples.

In this section, taking into account Polya distribution, we present bivariate
Lupa\c{s} Kantorovich-Stancu type operators via Pochhammer $k$-symbol mainly
motivated by the work \cite{AIK}. Also we give rate of convergence via modulus
of continuity and the Lipschitz class functions for the bivariate case and we
prove the Voronovskaja type theorem for the modified operators. Finally we try
to illustrate the approximation process with some graphics. Let $J^{2}=J\times
J$ where $J=\left[  0,1\right]  $. $C\left(  J^{2}\right)  \ $be the space of
all real-valued continuous functions on $J^{2}\ $endowed with the norm%
\[
\left \Vert g\right \Vert _{C\left(  J^{2}\right)  }=\sup_{x,y\in J}\left \vert
g\left(  x,y\right)  \right \vert .
\]
Let $C^{2}\left(  J^{2}\right)  \ $be the space of all continuous functions $g\in
C\left(  J^{2}\right)  $ such that $g_{x},~g_{y},~g_{xx},~g_{yy}$ belong to
$C\left(  J^{2}\right)  .$ The norm of $g\in C^{2}\left(  J^{2}\right)  $ is
defined by%
\[
\left \Vert g\right \Vert _{C^{2}\left(  J^{2}\right)  }=\left \Vert g\right \Vert
_{C\left(  J^{2}\right)  }+%
{\displaystyle \sum \limits_{i=1}^{2}}
\left(  \left \Vert \frac{\partial^{i}g}{\partial x^{i}}\right \Vert _{C\left(
J^{2}\right)  }+\left \Vert \frac{\partial^{i}g}{\partial y^{i}}\right \Vert
_{C\left(  J^{2}\right)  }\right)  .
\]

We construct Lupa\c{s}-Kantorovich-Stancu type operators in the bivariate case
as follows%
\begin{align}
K_{n_{1},n_{2}}^{\left(  \alpha_{1},\alpha_{2},\beta_{1},\beta_{2},k_{1}%
,k_{2}\right)  }\left(  f;x,y\right)   &  =\frac{\left(  n_{1}+\beta
_{1}+1\right)  \left(  n_{2}+\beta_{2}+1\right)  }{\left(  n_{1}\right)
_{n_{1},k_{1}}\left(  n_{2}\right)  _{n_{2},k_{2}}}\label{BivPol}\\
&  \times \sum_{m_{1}=0}^{n_{1}}\sum_{m_{2}=0}^{n_{2}}\left(
\begin{array}
[c]{c}%
n_{1}\\
m_{1}%
\end{array}
\right)  \left(
\begin{array}
[c]{c}%
n_{2}\\
m_{2}%
\end{array}
\right)  \left(  n_{1}x\right)  _{m_{1},k_{1}}\left(  n_{1}-n_{1}x\right)
_{n_{1}-m_{1},k_{1}}\nonumber \\
&  \times \left(  n_{2}y\right)  _{m_{2},k_{2}}\left(  n_{2}-n_{2}y\right)
_{n_{2}-m_{2},k_{2}}%
{\displaystyle \int \limits_{\frac{m_{1}+\alpha_{1}}{n_{1}+\beta_{1}+1}}%
^{\frac{m_{1}+\alpha_{1}+1}{n_{1}+\beta_{1}+1}}}
{\displaystyle \int \limits_{\frac{m_{2}+\alpha_{2}}{n_{2}+\beta_{2}+1}}%
^{\frac{m_{2}+\alpha_{2}+1}{n_{2}+\beta_{2}+1}}}
f\left(  t,s\right)  dtds,\nonumber
\end{align}
where $n_{1},n_{2}\in$ $%
\mathbb{N}
$ and $k_{1},k_{2},\alpha_{1},\alpha_{2},\beta_{1},\beta_{2}$ are nonzero real
numbers provided $0\leq \alpha_{1}\leq \alpha_{2}\leq \beta_{1}\leq \beta_{2}.$

Now, we establish the following Lemmas. It is obvious that the extension
$K_{n_{1},n_{2}}^{\left(  \mathbf{\alpha},\mathbf{\beta},\mathbf{k}\right)  }$
given above is coincide with the operators in \cite{AIK} when $k_{i}=1\ $and
$\alpha_{i}=0,\  \beta_{i}=0\ $for $i=1,2.$ Also in \cite{RMK}\ taking
$\alpha_{2}=\beta_{2}=0$, the operators in \cite{RMK} reduce to the
$K_{n_{1},n_{2}}^{\left(  \mathbf{\alpha},\mathbf{\beta},\mathbf{1}\right)
}.$

In the following, for the simplicity we use the notation $K_{n_{1},n_{2}%
}^{\left(  \mathbf{\alpha},\mathbf{\beta},\mathbf{k}\right)  }$ instead of
$K_{n_{1},n_{2}}^{\left(  \alpha_{1},\alpha_{2},\beta_{1},\beta_{2}%
,k_{1},k_{2}\right)  }$ which is given as
\[
K_{n_{1},n_{2}}^{\left(  \alpha_{1},\alpha_{2},\beta_{1},\beta_{2},k_{1}%
,k_{2}\right)  }\left(  f;x,y\right)  :=K_{n_{1},n_{2}}^{\left(
\mathbf{\alpha,\beta},\mathbf{k}\right)  }\left(  f;x,y\right)  ,
\]
where $\mathbf{\alpha}=\left(  \alpha_{1},\alpha_{2}\right)  ,\  \mathbf{\beta}=\left(  \beta_{1},\beta_{2}\right)  ,\  \mathbf{k}=\left(  k_{1}%
,k_{2}\right)  .$

First, we state some basic lemmas which are required to prove approximation
properties of $K_{n_{1},n_{2}}^{\left(  \mathbf{\alpha},\mathbf{\beta
},\mathbf{k}\right)  }.\ $Let us denote the monomials $e_{i,j}\left(
x,y\right)  =x^{i}y^{j}$ for $\left(  i,j\right)  \in%
\mathbb{N}
_{0}\times%
\mathbb{N}
_{0}.$

\begin{lemma}
\label{test bivariate} Let $n_{1},n_{2}\in%
\mathbb{N}
$ and $\ k_{1},k_{2}$ be nonnegative real numbers. From Lemma \ref{testkant}, for the
operators $K_{n_{1},n_{2}}^{\left(  \mathbf{\alpha},\mathbf{\beta}%
,\mathbf{k}\right)  }$ defined by $\left(  \text{\ref{BivPol}}\right)  $, we
have%
\begin{align}
K_{n_{1},n_{2}}^{\left(  \mathbf{\alpha},\mathbf{\beta},\mathbf{k}\right)
}\left(  e_{00};x,y\right)   &  =1,\label{bivtest1}\\
K_{n_{1},n_{2}}^{\left(  \mathbf{\alpha},\mathbf{\beta},\mathbf{k}\right)
}\left(  e_{10};x,y\right)   &  =\frac{2\alpha_{1}+1}{2\left(  n_{1}+\beta
_{1}+1\right)  }+\frac{n_{1}x}{n_{1}+\beta_{1}+1},\label{bivtest2}\\
K_{n_{1},n_{2}}^{\left(  \mathbf{\alpha},\mathbf{\beta},\mathbf{k}\right)
}\left(  e_{01};x,y\right)   &  =\frac{2\alpha_{2}+1}{2\left(  n_{2}+\beta
_{2}+1\right)  }+\frac{n_{2}y}{n_{2}+\beta_{2}+1},\label{bivtest3}\\
K_{n_{1},n_{2}}^{\left(  \mathbf{\alpha},\mathbf{\beta},\mathbf{k}\right)
}\left(  e_{20};x,y\right)   &  =\frac{n_{1}^{2}}{\left(  n_{1}+\beta
_{1}+1\right)  ^{2}}\left[  x^{2}+x\left(  1-x\right)  \left(  \frac{k_{1}%
+1}{n_{1}+k_{1}}\right)  \right] \nonumber \\
&  +\frac{\left(  2\alpha_{1}+1\right)  n_{1}x}{\left(  n_{1}+\beta
_{1}+1\right)  ^{2}}+\frac{\left(  \alpha_{1}+1\right)  ^{3}-\alpha_{1}^{3}%
}{3\left(  n_{1}+\beta_{1}+1\right)  ^{2}},\label{bivtest4}\\
K_{n_{1},n_{2}}^{\left(  \mathbf{\alpha},\mathbf{\beta},\mathbf{k}\right)
}\left(  e_{02};x,y\right)   &  =\frac{n_{2}^{2}}{\left(  n_{2}+\beta
_{2}+1\right)  ^{2}}\left[  y^{2}+y\left(  1-y\right)  \left(  \frac{k_{2}%
+1}{n_{2}+k_{2}}\right)  \right] \nonumber \\
&  +\frac{\left(  2\alpha_{2}+1\right)  n_{2}y}{\left(  n_{2}+\beta
_{2}+1\right)  ^{2}}+\frac{\left(  \alpha_{2}+1\right)  ^{3}-\alpha_{2}^{3}%
}{3\left(  n_{2}+\beta_{2}+1\right)  ^{2}}. \label{bivtest5}%
\end{align}

\end{lemma}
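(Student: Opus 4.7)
The approach is to exploit the tensor-product structure of the bivariate operator. Observe that the double sum in (\ref{BivPol}) and the iterated integral factor completely into an $x,t,m_1,n_1,\alpha_1,\beta_1,k_1$ piece and a $y,s,m_2,n_2,\alpha_2,\beta_2,k_2$ piece. Consequently, whenever the integrand is a product $f(t,s) = g(t)h(s)$, the operator itself factors:
\begin{equation*}
K_{n_{1},n_{2}}^{\left(\mathbf{\alpha},\mathbf{\beta},\mathbf{k}\right)}\left(g(t)h(s);x,y\right) = K_{n_{1}}^{\left(\alpha_{1},\beta_{1},k_{1}\right)}\left(g;x\right)\,K_{n_{2}}^{\left(\alpha_{2},\beta_{2},k_{2}\right)}\left(h;y\right).
\end{equation*}
Since every monomial $e_{ij}(x,y)=x^{i}y^{j}$ on the test set is of the form $e_{i}(t)\cdot e_{j}(s)$, all five identities reduce to products of univariate moments that are already computed in Lemma \ref{testkant}.

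First I would record the factorization explicitly by pulling the $m_2$-sum and $s$-integral past all factors that depend only on $m_1,t$, and vice versa; this is routine because the Pochhammer $k$-symbol factors, the binomial coefficients factor, and the normalising constant $(n_1+\beta_1+1)(n_2+\beta_2+1)/[(n_1)_{n_1,k_1}(n_2)_{n_2,k_2}]$ splits as a product. Then for each of $(i,j)\in\{(0,0),(1,0),(0,1),(2,0),(0,2)\}$ I would substitute the known values: $K_{n_r}^{(\alpha_r,\beta_r,k_r)}(e_0;\cdot)=1$ from (\ref{testkant1}), the linear moment $K_{n_r}^{(\alpha_r,\beta_r,k_r)}(e_1;\cdot)$ from (\ref{testkant2}), and the quadratic moment $K_{n_r}^{(\alpha_r,\beta_r,k_r)}(e_2;\cdot)$ from (\ref{testkant3}). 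Multiplying the relevant factor by $1$ from the other variable yields (\ref{bivtest1})--(\ref{bivtest5}) immediately.

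There is essentially no obstacle here: the only care-requiring step is verifying the factorization of the operator for tensor-product integrands, which is a matter of bookkeeping with the indices. Once that is in place, the five identities are transcriptions of the corresponding one-dimensional formulas from Lemma \ref{testkant}, with the roles of $(n,\alpha,\beta,k,x)$ played by $(n_1,\alpha_1,\beta_1,k_1,x)$ for the $e_{i0}$ cases and by $(n_2,\alpha_2,\beta_2,k_2,y)$ for the $e_{0j}$ cases. I would therefore write the proof as a single short paragraph establishing the tensor product identity, followed by five one-line substitutions, without reproducing the calculations already done in Lemma \ref{testkant}.
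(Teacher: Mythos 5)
Your proposal is correct and is exactly the route the paper takes: the lemma is stated as following directly "from Lemma \ref{testkant}", which is precisely the tensor-product factorization you describe, since the double sum, the Pochhammer $k$-symbols, the normalizing constants, and the iterated integral all split into an $(n_1,\alpha_1,\beta_1,k_1,x)$-factor and an $(n_2,\alpha_2,\beta_2,k_2,y)$-factor when $f(t,s)=e_i(t)e_j(s)$. Substituting (\ref{testkant1})--(\ref{testkant3}) for the appropriate variable and multiplying by $1$ from the other variable gives (\ref{bivtest1})--(\ref{bivtest5}), as you say.
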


\begin{corollary}
\label{centkantbiv} Let $n_{1},n_{2}\in%
\mathbb{N}
$,$\ k_{1},k_{2}\geq0.$ From Lemma \ref{test bivariate}, the central moments of the operators
$K_{n_{1},n_{2}}^{\left(  \mathbf{\alpha},\mathbf{\beta},\mathbf{k}\right)  }$
are given by%
\begin{align}
K_{n_{1},n_{2}}^{\left(  \mathbf{\alpha},\mathbf{\beta},\mathbf{k}\right)
}\left(  e_{10}-x;x,y\right)   &  =\frac{2\alpha_{1}+1}{2\left(  n_{1}+\beta
_{1}+1\right)  }-\frac{\left(  \beta_{1}+1\right)  x}{n_{1}+\beta_{1}%
+1},\label{bivcent1}\\
K_{n_{1},n_{2}}^{\left(  \mathbf{\alpha},\mathbf{\beta},\mathbf{k}\right)
}\left(  e_{01}-y;x,y\right)   &  =\frac{2\alpha_{2}+1}{2\left(  n_{2}+\beta
_{2}+1\right)  }-\frac{\left(  \beta_{2}+1\right)  y}{n_{2}+\beta_{2}%
+1},\label{bivcent2}\\
K_{n_{1},n_{2}}^{\left(  \mathbf{\alpha},\mathbf{\beta},\mathbf{k}\right)
}\left(  \left(  e_{10}-x\right)  ^{2};x,y\right)   &  =\frac{1}{\left(
n_{1}+\beta_{1}+1\right)  ^{2}}\left \{  x\left(  1-x\right)  \left(
\frac{n_{1}^{2}\left(  k_{1}+1\right)  }{n_{1}+k_{1}}-\left(  \beta
_{1}+1\right)  ^{2}\right)  \right. \nonumber \\
&  +\left.  \left(  \beta_{1}+1\right)  \left(  \beta_{1}-2\alpha_{1}\right)
x+\frac{\left(  \alpha_{1}+1\right)  ^{3}-\alpha_{1}^{3}}{3}\right \}
,\label{bivcent3}\\
K_{n_{1},n_{2}}^{\left(  \mathbf{\alpha},\mathbf{\beta},\mathbf{k}\right)
}\left(  \left(  e_{01}-y\right)  ^{2};x,y\right)   &  =\frac{1}{\left(
n_{2}+\beta_{2}+1\right)  ^{2}}\left \{  y\left(  1-y\right)  \left(
\frac{n_{2}^{2}\left(  k_{2}+1\right)  }{n_{2}+k_{2}}-\left(  \beta
_{2}+1\right)  ^{2}\right)  \right. \nonumber \\
&  +\left.  \left(  \beta_{2}+1\right)  \left(  \beta_{2}-2\alpha_{2}\right)
y+\frac{\left(  \alpha_{2}+1\right)  ^{3}-\alpha_{2}^{3}}{3}\right \}  .
\label{bivcent4}%
\end{align}

\end{corollary}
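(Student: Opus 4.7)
The plan is to reduce everything to the univariate case already handled in Corollary~\ref{centkant}. Observe that the bivariate operator $K_{n_1,n_2}^{(\mathbf{\alpha},\mathbf{\beta},\mathbf{k})}$ has a tensor-product structure: the double sum factors as a product and the double integral separates, with the integrand depending only on one of the two variables in each of the four cases we must analyze. Concretely, since the function $(t,s)\mapsto (t-x)^j$ depends only on $t$, the $s$-sums and $s$-integral telescope down using only $K_{n_2,k_2}^{(\alpha_2,\beta_2)}(e_0;y)=1$, and symmetrically for the $y$-direction. Thus
\[
K_{n_1,n_2}^{(\mathbf{\alpha},\mathbf{\beta},\mathbf{k})}\bigl((e_{10}-x)^j;x,y\bigr)=K_{n_1}^{(\alpha_1,\beta_1,k_1)}\bigl((e_1-x)^j;x\bigr),
\]
and analogously for the $y$-coordinate. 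Each of the four claimed identities then follows immediately by invoking the univariate formulas of Corollary~\ref{centkant}.

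If one prefers to avoid the factorization remark and proceed purely algebraically, the same result is obtained by linearity: expand
\[
(e_{10}-x)^2 = e_{20}-2x\,e_{10}+x^2\,e_{00},\qquad (e_{01}-y)^2 = e_{02}-2y\,e_{01}+y^2\,e_{00},
\]
apply $K_{n_1,n_2}^{(\mathbf{\alpha},\mathbf{\beta},\mathbf{k})}$ termwise, and substitute the values of $K_{n_1,n_2}^{(\mathbf{\alpha},\mathbf{\beta},\mathbf{k})}(e_{ij};x,y)$ for $i+j\le 2$ supplied by Lemma~\ref{test bivariate}. Since $K(e_{20};x,y)$ matches the univariate $K_{n_1}^{(\alpha_1,\beta_1,k_1)}(e_2;x)$ exactly, the algebra collapses to the identical simplification carried out in the proof of Corollary~\ref{centkant}.

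There is no substantive obstacle here: the work is entirely bookkeeping with the moment table in Lemma~\ref{test bivariate}. The only mild care needed is to track the two independent parameter sets $(\alpha_1,\beta_1,k_1)$ and $(\alpha_2,\beta_2,k_2)$ and to confirm the separation-of-variables step. Once the first central moments $K(e_{10}-x;x,y)$ and $K(e_{01}-y;x,y)$ are obtained by direct subtraction, formulas \eqref{bivcent3} and \eqref{bivcent4} follow by the same regrouping $x(1-x)\cdot[\,\cdot\,]+(\beta+1)(\beta-2\alpha)x+\tfrac{(\alpha+1)^3-\alpha^3}{3}$ used in Lemma~\ref{m} for the univariate operator.
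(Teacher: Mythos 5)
Your proposal is correct and matches the paper's (implicit) argument: the corollary is stated as an immediate consequence of the moment table in Lemma~\ref{test bivariate}, obtained by expanding $(e_{10}-x)^2=e_{20}-2xe_{10}+x^2e_{00}$ (and its $y$-analogue) and regrouping exactly as in Corollary~\ref{centkant} and Lemma~\ref{m}. Your additional tensor-product observation, reducing each identity to the univariate Corollary~\ref{centkant}, is a clean and equivalent way to see the same thing.
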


\begin{lemma}
\label{bivkor1} Let $n_{1},n_{2}\in%
\mathbb{N}
$,$\ k_{1},k_{2}\geq0.\ $Then for every $f\in C\left(  J^{2}\right)  ,\ $
\[
\lim_{n_{1},n_{2}\rightarrow \infty}\left(  e_{ij};x,y\right)  =e_{ij},
\]
for $\left(  i,j\right)  \in \left \{  \left(  0,0\right)  ,\left(  0,1\right)
,\left(  1,0\right)  \right \}  $ and
\[
\lim_{n_{1},n_{2}\rightarrow \infty}K_{n_{1},n_{2}}^{\left(  \mathbf{\alpha
},\mathbf{\beta},\mathbf{k}\right)  }\left(  e_{20}+e_{02};x,y\right)
=e_{20}+e_{02},
\]
uniformly on $J^{2},\ J=\left[  0,1\right]  .$
\end{lemma}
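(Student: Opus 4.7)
The plan is to read off the statement directly from the explicit moment formulas recorded in Lemma \ref{test bivariate} and pass to the limit as $n_{1},n_{2}\to\infty$ in each expression. Since all of the formulas produced there are polynomials in $x,y$ of fixed degree whose coefficients are rational functions in $n_{1},n_{2}$ with bounded numerators and denominators growing like powers of $n_{i}$, convergence of the coefficients will upgrade automatically to uniform convergence on the compact square $J^{2}=[0,1]^{2}$.

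First I would dispose of the three simple cases. For $(i,j)=(0,0)$ equation \eqref{bivtest1} gives $K_{n_{1},n_{2}}^{(\boldsymbol{\alpha},\boldsymbol{\beta},\mathbf{k})}(e_{00};x,y)=1$ identically, so the limit is $e_{00}$ trivially. For $(i,j)=(1,0)$ the formula \eqref{bivtest2} shows that
\[
K_{n_{1},n_{2}}^{(\boldsymbol{\alpha},\boldsymbol{\beta},\mathbf{k})}(e_{10};x,y)
=\frac{2\alpha_{1}+1}{2(n_{1}+\beta_{1}+1)}+\frac{n_{1}}{n_{1}+\beta_{1}+1}\,x,
\]
and the first summand tends to $0$ while the coefficient of $x$ tends to $1$, both uniformly in $(x,y)\in J^{2}$. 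The case $(i,j)=(0,1)$ is handled identically via \eqref{bivtest3}.

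For the sum $e_{20}+e_{02}$, I would add \eqref{bivtest4} and \eqref{bivtest5} and examine the limit of each summand separately. For the $e_{20}$ part, observe that
\[
\frac{n_{1}^{2}}{(n_{1}+\beta_{1}+1)^{2}}\longrightarrow 1,\qquad
\frac{n_{1}^{2}}{(n_{1}+\beta_{1}+1)^{2}}\cdot\frac{k_{1}+1}{n_{1}+k_{1}}\longrightarrow 0,
\]
and the remaining two terms are $O(1/n_{1})$ and $O(1/n_{1}^{2})$ respectively; thus $K_{n_{1},n_{2}}^{(\boldsymbol{\alpha},\boldsymbol{\beta},\mathbf{k})}(e_{20};x,y)\to x^{2}$, and analogously $K_{n_{1},n_{2}}^{(\boldsymbol{\alpha},\boldsymbol{\beta},\mathbf{k})}(e_{02};x,y)\to y^{2}$. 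Summing the two yields the claimed convergence to $e_{20}+e_{02}$.

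The only thing worth being careful about is the uniformity assertion, but it is really a consequence of the polynomial structure: each $K_{n_{1},n_{2}}^{(\boldsymbol{\alpha},\boldsymbol{\beta},\mathbf{k})}(e_{ij};x,y)-e_{ij}(x,y)$ is a polynomial in $x,y$ of degree at most $2$ whose coefficients are $O(1/\min(n_{1},n_{2}))$, so taking the supremum over $J^{2}$ in each monomial (a finite sum of terms uniformly bounded by a constant times $1/\min(n_{1},n_{2})$) gives uniform convergence. No genuine obstacle is anticipated; the argument is entirely computational and rests on the moment table already proved.
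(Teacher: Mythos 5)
Your proof is correct and follows exactly the route the paper intends: the lemma is stated without an explicit argument precisely because it is an immediate consequence of the moment formulas \eqref{bivtest1}--\eqref{bivtest5} in Lemma \ref{test bivariate}, obtained by letting $n_{1},n_{2}\to\infty$ term by term, with uniformity on $J^{2}$ coming from the fact that each expression is a fixed-degree polynomial in $x,y$ whose coefficients converge. Your explicit verification of the $O(1/n_{i})$ decay of the lower-order terms and of the uniformity is a faithful filling-in of the details the paper leaves implicit.
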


\begin{theorem}
\label{w} Let $n_{1},n_{2}\in%
\mathbb{N}
$,$\ k_{1},k_{2}\geq0.\ $Then for every $f\in C\left(  J^{2}\right)  ,$ we
have%
\begin{equation}
\lim_{n_{1},n_{2}\rightarrow \infty}\left \Vert K_{n_{1},n_{2}}^{\left(
\mathbf{\alpha},\mathbf{\beta},\mathbf{k}\right)  }\left(  f\right)
-f\right \Vert =0. \label{bivkor}%
\end{equation}

\end{theorem}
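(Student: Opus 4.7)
The plan is to invoke the bivariate analogue of the Korovkin theorem, namely Volkov's theorem, which asserts that if $\{L_{n_1,n_2}\}$ is a sequence of positive linear operators on $C(J^2)$ satisfying
\[
\lim_{n_1,n_2\to\infty} L_{n_1,n_2}(e_{ij};x,y) = e_{ij}(x,y)
\]
uniformly on $J^2$ for $(i,j)\in\{(0,0),(1,0),(0,1)\}$, together with
\[
\lim_{n_1,n_2\to\infty} L_{n_1,n_2}(e_{20}+e_{02};x,y) = e_{20}(x,y)+e_{02}(x,y)
\]
uniformly on $J^2$, then $L_{n_1,n_2}(f) \to f$ uniformly on $J^2$ for every $f\in C(J^2)$.

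First I would verify that $K_{n_1,n_2}^{(\mathbf{\alpha},\mathbf{\beta},\mathbf{k})}$ is a positive linear operator. Linearity is immediate from the definition in $(\ref{BivPol})$, and positivity follows because the coefficients $\binom{n_1}{m_1}\binom{n_2}{m_2}(n_1x)_{m_1,k_1}(n_1-n_1x)_{n_1-m_1,k_1}(n_2y)_{m_2,k_2}(n_2-n_2y)_{n_2-m_2,k_2}$ and the normalizing factors are nonnegative for $x,y\in[0,1]$ and $k_1,k_2\geq 0$, while the integral of a nonnegative function is nonnegative.

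Next, I would directly apply Lemma \ref{bivkor1}, which already provides the four required uniform limits on the test functions $e_{00}$, $e_{10}$, $e_{01}$ and $e_{20}+e_{02}$. Combining this with Volkov's theorem yields $\lim_{n_1,n_2\to\infty}\|K_{n_1,n_2}^{(\mathbf{\alpha},\mathbf{\beta},\mathbf{k})}(f)-f\|=0$, as claimed.

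There is no serious obstacle here: all of the substantive computation has been deferred to Lemma \ref{test bivariate} and its corollary, and Lemma \ref{bivkor1} packages the pointwise/uniform limits of the test functions that Volkov's theorem needs. The only matter requiring mild care is to note that the convergence of the moments in Lemma \ref{bivkor1} is indeed \emph{uniform} on the compact square $J^2$, which is immediate because each of $K_{n_1,n_2}^{(\mathbf{\alpha},\mathbf{\beta},\mathbf{k})}(e_{ij};x,y)-e_{ij}(x,y)$ is, by Lemma \ref{test bivariate}, a polynomial in $x,y$ whose coefficients tend to $0$ as $n_1,n_2\to\infty$.
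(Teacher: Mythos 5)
Your proposal is correct and follows exactly the paper's own argument: the paper likewise cites Volkov's bivariate Korovkin theorem and applies the test-function limits of Lemma \ref{bivkor1} to conclude. The extra checks you supply (positivity and linearity of the operators, and uniformity of the moment convergence) are sound and only make explicit what the paper leaves implicit.
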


\begin{proof}
According to the Korovkin theorem for the bivariate case given in
\cite{Volko}, by applying the results given in Lemma \ref{bivkor1}, we get the
desired result.
\end{proof}

For any function $f\in C\left(  J^{2}\right)  ,$ the complete modulus of
continuity for bivariate case is defined as follows:%
\begin{equation}
\widetilde{\omega}\left(  f;\delta \right)  =\sup \left \{  \left \vert f\left(
t,s\right)  -f\left(  x,y\right)  \right \vert :\left(  t,s\right)  ,~\left(
x,y\right)  \in J^{2}\text{ and }\sqrt{\left(  t-x\right)  ^{2}+\left(
s-y\right)  ^{2}}\leq \delta \right \}  \label{compmod}%
\end{equation}
Moreover, the partial moduli of continuity of $f$ with respect to $x$ and
$y\ $ is given by$\ $%
\begin{align}
\omega_{1}\left(  f;\delta \right)   &  =\sup \left \{  \left \vert f\left(
x_{1},y\right)  -f\left(  x_{2},y\right)  \right \vert :y\in J\text{ and
}\left \vert x_{1}-x_{2}\right \vert \leq \delta \right \}  ,\label{part1}\\
\omega_{2}\left(  f;\delta \right)   &  =\sup \left \{  \left \vert f\left(
x,y_{1}\right)  -f\left(  x,y_{2}\right)  \right \vert :x\in J\text{ and
}\left \vert y_{1}-y_{2}\right \vert \leq \delta \right \}  . \label{part2}%
\end{align}

\begin{theorem}
\label{p} Let $n_{1},n_{2}\in%
\mathbb{N}
$,$\ k_{1},k_{2}\geq0.\ $Then for every $f\in C\left(  J^{2}\right)  $ for all
$\left(  x,y\right)  \in J^{2},$ we have the following result%
\[
\left \vert K_{n_{1},n_{2}}^{\left(  \mathbf{\alpha},\mathbf{\beta}%
,\mathbf{k}\right)  }\left(  f;x,y\right)  -f\left(  x,y\right)  \right \vert
\leq2\widetilde{\omega}\left(  f;\sqrt{\xi_{n_{1},k_{1}}^{\left(  \alpha
_{1},\beta_{1}\right)  }+\xi_{n_{2},k_{2}}^{\left(  \alpha_{2},\beta
_{2}\right)  }}\right)  ,
\]
where
\begin{equation}
\xi_{n_{i},k_{i}}^{\left(  \alpha_{i},\beta_{i}\right)  }=\frac{1}{n_{i}%
+\beta_{i}+1}\left(  \frac{k_{i}+1}{4}+\beta_{i}+2\alpha_{i}+\frac{\left(
\alpha_{i}+1\right)  ^{3}-\alpha_{i}^{3}}{3}\right)  ,\ i=1,2 \label{**}%
\end{equation}
and $\widetilde{\omega}\left(  f;.\right)  $ is the complete modulus of
continuity defined by $\left(  \text{\ref{compmod}}\right)  $.
\end{theorem}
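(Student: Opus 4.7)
The plan is to follow the standard Shisha--Mond style argument for bivariate positive linear operators, adapted to $K_{n_1,n_2}^{(\mathbf{\alpha},\mathbf{\beta},\mathbf{k})}$. I would begin from the well-known inequality for the complete modulus of continuity,
\[
\left|f(t,s)-f(x,y)\right|\leq \widetilde{\omega}(f;\delta)\left(1+\frac{1}{\delta}\sqrt{(t-x)^{2}+(s-y)^{2}}\right),
\]
valid for any $\delta>0$ and any $(t,s),(x,y)\in J^{2}$. Applying the positive linear operator $K_{n_1,n_2}^{(\mathbf{\alpha},\mathbf{\beta},\mathbf{k})}$ in the variables $(t,s)$ and invoking linearity together with $K_{n_1,n_2}^{(\mathbf{\alpha},\mathbf{\beta},\mathbf{k})}(e_{00};x,y)=1$ from Lemma~\ref{test bivariate}, I obtain
\[
\left|K_{n_1,n_2}^{(\mathbf{\alpha},\mathbf{\beta},\mathbf{k})}(f;x,y)-f(x,y)\right|\leq \widetilde{\omega}(f;\delta)\left(1+\frac{1}{\delta}\,K_{n_1,n_2}^{(\mathbf{\alpha},\mathbf{\beta},\mathbf{k})}\!\left(\sqrt{(t-x)^{2}+(s-y)^{2}};x,y\right)\right).
\]

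Next I would apply the Cauchy--Schwarz inequality to the positive linear functional $K_{n_1,n_2}^{(\mathbf{\alpha},\mathbf{\beta},\mathbf{k})}(\cdot;x,y)$, which gives
\[
K_{n_1,n_2}^{(\mathbf{\alpha},\mathbf{\beta},\mathbf{k})}\!\left(\sqrt{(t-x)^{2}+(s-y)^{2}};x,y\right)\leq \sqrt{K_{n_1,n_2}^{(\mathbf{\alpha},\mathbf{\beta},\mathbf{k})}\!\left((t-x)^{2}+(s-y)^{2};x,y\right)}.
\]
Here I use that $K_{n_1,n_2}^{(\mathbf{\alpha},\mathbf{\beta},\mathbf{k})}$ is a tensor product of the one-dimensional operators $K_{n}^{(\alpha,\beta,k)}$, so
\[
K_{n_1,n_2}^{(\mathbf{\alpha},\mathbf{\beta},\mathbf{k})}\!\left((t-x)^{2};x,y\right)=K_{n_{1}}^{(\alpha_{1},\beta_{1},k_{1})}\!\left((e_{1}-x)^{2};x\right),
\]
and similarly for the $(s-y)^{2}$ term, the factor involving $y$ being absorbed by $K_{n_{2}}^{(\alpha_{2},\beta_{2},k_{2})}(e_{0};y)=1$.

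At this point I would invoke Lemma~\ref{m} in each variable, which yields $K_{n_{i}}^{(\alpha_{i},\beta_{i},k_{i})}\!\left((e_{1}-x)^{2};x\right)\leq \xi_{n_{i},k_{i}}^{\alpha_{i},\beta_{i}}$ with the constants exactly as in (\ref{**}). Combining these two bounds produces
\[
K_{n_1,n_2}^{(\mathbf{\alpha},\mathbf{\beta},\mathbf{k})}\!\left(\sqrt{(t-x)^{2}+(s-y)^{2}};x,y\right)\leq \sqrt{\xi_{n_{1},k_{1}}^{(\alpha_{1},\beta_{1})}+\xi_{n_{2},k_{2}}^{(\alpha_{2},\beta_{2})}}.
\]
The final step is to choose $\delta=\sqrt{\xi_{n_{1},k_{1}}^{(\alpha_{1},\beta_{1})}+\xi_{n_{2},k_{2}}^{(\alpha_{2},\beta_{2})}}$, which makes the bracket on the right-hand side equal to $2$ and delivers the stated estimate.

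The argument is essentially routine once the preparatory lemmas are in place; no step presents a serious obstacle. The only delicate point is the verification that the tensor-product structure of $K_{n_1,n_2}^{(\mathbf{\alpha},\mathbf{\beta},\mathbf{k})}$ indeed lets the mixed expectation $K_{n_1,n_2}^{(\mathbf{\alpha},\mathbf{\beta},\mathbf{k})}((t-x)^{2};x,y)$ reduce cleanly to the one-dimensional central second moment, but this is immediate from the separability of the defining sum and the double integral in (\ref{BivPol}). Thus the whole proof amounts to one application of the subadditivity property of $\widetilde{\omega}$, one application of Cauchy--Schwarz, and one appeal to Lemma~\ref{m}.
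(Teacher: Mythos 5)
Your proposal is correct and follows essentially the same route as the paper's proof: the subadditivity bound for the complete modulus of continuity, one application of the Cauchy--Schwarz inequality to pass from $\sqrt{(t-x)^{2}+(s-y)^{2}}$ to the sum of second central moments, the reduction via the tensor-product structure to the one-dimensional bounds of Lemma~\ref{m}, and the choice $\delta=\sqrt{\xi_{n_{1},k_{1}}^{(\alpha_{1},\beta_{1})}+\xi_{n_{2},k_{2}}^{(\alpha_{2},\beta_{2})}}$. No gaps.
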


\begin{proof}
Taking in view the definition of the operators $\left(  \text{\ref{BivPol}%
}\right)  $ and using the complete modulus of continuity $\left(
\text{\ref{compmod}}\right)  $, from the Cauchy--Schwarz inequality it follows
that%
\begin{align*}
&  \left \vert K_{n_{1},n_{2}}^{\left(  \mathbf{\alpha},\mathbf{\beta
},\mathbf{k}\right)  }\left(  f;x,y\right)  -f\left(  x,y\right)  \right \vert
\\
&  \leq K_{n_{1},n_{2}}^{\left(  \mathbf{\alpha},\mathbf{\beta},\mathbf{k}%
\right)  }\left(  \left \vert f\left(  t,s\right)  -f\left(  x,y\right)
\right \vert ;x,y\right) \\
&  \leq \widetilde{\omega}\left(  f;\delta_{n_{1},n_{2}}\right)  \left(
1+\frac{1}{\delta_{n_{1},n_{2}}}K_{n_{1},n_{2}}^{\left(  \mathbf{\alpha
},\mathbf{\beta},\mathbf{k}\right)  }\left(  \sqrt{\left(  t-x\right)
^{2}+\left(  s-y\right)  ^{2}};x,y\right)  \right) \\
&  \leq \widetilde{\omega}\left(  f;\delta_{n_{1},n_{2}}\right)  \left(
1+\frac{1}{\delta_{n_{1},n_{2}}}\left(  K_{n_{1},n_{2}}^{\left(
\mathbf{\alpha},\mathbf{\beta},\mathbf{k}\right)  }\left(  \left(  t-x\right)
^{2}+\left(  s-y\right)  ^{2};x,y\right)  \right)  ^{1/2}\right) \\
&  \leq \widetilde{\omega}\left(  f;\delta_{n_{1},n_{2}}\right)  \left(
1+\frac{1}{\delta_{n_{1},n_{2}}}\left(  K_{n_{1}}^{\left(  \alpha_{1}%
,\beta_{1},k_{1}\right)  }\left(  \left(  t-x\right)  ^{2};x\right)
+K_{n_{2}}^{\left(  \alpha_{2},\beta_{2},k_{2}\right)  }\left(  \left(
s-y\right)  ^{2};y\right)  \right)  ^{1/2}\right)  .
\end{align*}
From Lemma \ref{m}, we find that%
\begin{align*}
&  \left \vert K_{n_{1},n_{2}}^{\left(  \mathbf{\alpha},\mathbf{\beta
},\mathbf{k}\right)  }\left(  f;x,y\right)  -f\left(  x,y\right)  \right \vert
\\
&  \leq \widetilde{\omega}\left(  f;\delta_{n_{1},n_{2}}\right)  \left(
1+\frac{1}{\delta_{n_{1},n_{2}}}\left(  \xi_{n_{1},k_{1}}^{\left(  \alpha
_{1},\beta_{1}\right)  }+\xi_{n_{2},k_{2}}^{\left(  \alpha_{2},\beta
_{2}\right)  }\right)  ^{1/2}\right)
\end{align*}
where
\begin{align*}
\xi_{n_{1},k_{1}}^{\left(  \alpha_{1},\beta_{1}\right)  }  &  =\frac{1}%
{n_{1}+\beta_{1}+1}\left(  \frac{k_{1}+1}{4}+\beta_{1}+2\alpha_{1}%
+\frac{\left(  \alpha_{1}+1\right)  ^{3}-\alpha_{1}{}^{3}}{3}\right)  ,\\
\xi_{n_{2},k_{2}}^{\left(  \alpha_{2},\beta_{2}\right)  }  &  =\frac{1}%
{n_{2}+\beta_{2}+1}\left(  \frac{k_{2}+1}{4}+\beta_{2}+2\alpha_{2}%
+\frac{\left(  \alpha_{2}+1\right)  ^{3}-\alpha_{2}{}^{3}}{3}\right)  .
\end{align*}
Taking%
\[
\delta_{n_{1},n_{2}}=\sqrt{\xi_{n_{1},k_{1}}^{\left(  \alpha_{1},\beta
_{1}\right)  }+\xi_{n_{2},k_{2}}^{\left(  \alpha_{2},\beta_{2}\right)  }},
\]
we reach the required result.
\end{proof}

\begin{theorem}
Let $n_{1},n_{2}\in%
\mathbb{N}
$,$\ k_{1},k_{2}\geq0.\ $Then for every $f\in C\left(  J^{2}\right)  $ for all
$\left(  x,y\right)  \in J^{2},$ we have the following result%
\[
\left \vert K_{n_{1},n_{2}}^{\left(  \mathbf{\alpha},\mathbf{\beta}%
,\mathbf{k}\right)  }\left(  f;x,y\right)  -f\left(  x,y\right)  \right \vert
\leq2\left(  \omega^{\left(  1\right)  }\left(  f;\sqrt{\xi_{n_{1},k_{1}%
}^{\left(  \alpha_{1},\beta_{1}\right)  }}\right)  +\omega^{\left(  2\right)
}\left(  f;\sqrt{\xi_{n_{2},k_{2}}^{\left(  \alpha_{2},\beta_{2}\right)  }%
}\right)  \right)  ,
\]
where $\omega_{1}\left(  f;.\right)  $ and $\omega_{2}\left(  f;.\right)
\ $are the partial moduli of continuity of $f$ defined by $\left(
\text{\ref{part1}}\right)  $ and $\left(  \text{\ref{part2}}\right)  ,$
respectively and $\xi_{n_{i},k_{i}}^{\left(  \alpha_{i},\beta_{i}\right)  }$
is as in Theorem \ref{p}.
\end{theorem}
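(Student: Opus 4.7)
The plan is to exploit the additive decomposition
$f(t,s)-f(x,y) = \bigl[f(t,s)-f(x,s)\bigr] + \bigl[f(x,s)-f(x,y)\bigr]$
and then apply the two partial moduli of continuity to each bracket separately. Specifically, for any $\delta_1,\delta_2>0$ the defining property of $\omega_1$ and $\omega_2$ together with the standard inequality \eqref{mod1} applied in each coordinate yields
\[
\bigl|f(t,s)-f(x,s)\bigr|\leq\Bigl(1+\tfrac{|t-x|}{\delta_1}\Bigr)\omega_1(f;\delta_1),\qquad
\bigl|f(x,s)-f(x,y)\bigr|\leq\Bigl(1+\tfrac{|s-y|}{\delta_2}\Bigr)\omega_2(f;\delta_2).
\]

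Next I would apply the positive linear operator $K_{n_1,n_2}^{(\mathbf{\alpha},\mathbf{\beta},\mathbf{k})}$ to the inequality $|f(t,s)-f(x,y)|\leq$ (sum of the two right-hand sides) and use $K_{n_1,n_2}^{(\mathbf{\alpha},\mathbf{\beta},\mathbf{k})}(e_{00};x,y)=1$ from Lemma \ref{test bivariate} to obtain
\[
\bigl|K_{n_1,n_2}^{(\mathbf{\alpha},\mathbf{\beta},\mathbf{k})}(f;x,y)-f(x,y)\bigr|
\leq \omega_1(f;\delta_1)\Bigl(1+\tfrac{1}{\delta_1}K_{n_1,n_2}^{(\mathbf{\alpha},\mathbf{\beta},\mathbf{k})}(|e_{10}-x|;x,y)\Bigr)
+\omega_2(f;\delta_2)\Bigl(1+\tfrac{1}{\delta_2}K_{n_1,n_2}^{(\mathbf{\alpha},\mathbf{\beta},\mathbf{k})}(|e_{01}-y|;x,y)\Bigr).
\]

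Then I would bound the two first absolute moments by the square roots of the second central moments via the Cauchy--Schwarz inequality applied to the positive linear functional (using $K_{n_1,n_2}^{(\mathbf{\alpha},\mathbf{\beta},\mathbf{k})}(e_{00};x,y)=1$). Because the operator factors as a product in the two variables, these reduce to the one-dimensional central moments estimated in Lemma \ref{m}, giving
\[
K_{n_1,n_2}^{(\mathbf{\alpha},\mathbf{\beta},\mathbf{k})}\bigl((e_{10}-x)^2;x,y\bigr)\leq\xi_{n_1,k_1}^{(\alpha_1,\beta_1)},\qquad
K_{n_1,n_2}^{(\mathbf{\alpha},\mathbf{\beta},\mathbf{k})}\bigl((e_{01}-y)^2;x,y\bigr)\leq\xi_{n_2,k_2}^{(\alpha_2,\beta_2)}.
\]

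Finally, choosing $\delta_1=\sqrt{\xi_{n_1,k_1}^{(\alpha_1,\beta_1)}}$ and $\delta_2=\sqrt{\xi_{n_2,k_2}^{(\alpha_2,\beta_2)}}$ reduces each parenthesized factor to $1+1=2$, yielding the stated inequality. The argument is essentially routine once the decomposition is set up; the only mild subtlety is ensuring that the Cauchy--Schwarz step in each variable of the bivariate operator reduces cleanly to the univariate central moment, which is justified by the product form of $K_{n_1,n_2}^{(\mathbf{\alpha},\mathbf{\beta},\mathbf{k})}$ together with $K_{n_i}^{(\alpha_i,\beta_i,k_i)}(e_0;\cdot)=1$ from Lemma \ref{testkant}.
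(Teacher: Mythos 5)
Your proposal is correct and follows essentially the same route as the paper: split the increment through an intermediate point, bound each piece by the corresponding partial modulus via the standard $(1+|{\cdot}|/\delta)$ inequality, apply the operator, pass from first absolute moments to second central moments by Cauchy--Schwarz and the product structure, invoke Lemma \ref{m}, and choose $\delta_i=\sqrt{\xi_{n_i,k_i}^{(\alpha_i,\beta_i)}}$. The only cosmetic difference is that you split at $(x,s)$ while the paper splits at $(t,y)$, which is immaterial.
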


\begin{proof}
Directly from $\left(  \text{\ref{BivPol}}\right)  $ and the using the Cauchy
Schwarz inequality, we easily obtain%
\[%
\begin{array}
[c]{l}%
\left \vert K_{n_{1},n_{2}}^{\left(  \mathbf{\alpha},\mathbf{\beta}%
,\mathbf{k}\right)  }\left(  f;x,y\right)  -f\left(  x,y\right)  \right \vert
\leq K_{n_{1},n_{2}}^{\left(  \mathbf{\alpha},\mathbf{\beta},\mathbf{k}%
\right)  }\left(  \left \vert f\left(  t,s\right)  -f\left(  x,y\right)
\right \vert ;x,y\right) \\
\\
\leq K_{n_{1},n_{2}}^{\left(  \mathbf{\alpha},\mathbf{\beta},\mathbf{k}%
\right)  }\left(  \left \vert f\left(  t,s\right)  -f\left(  t,y\right)
\right \vert ;x,y\right)  +K_{n_{1},n_{2}}^{\left(  \mathbf{\alpha
},\mathbf{\beta},\mathbf{k}\right)  }\left(  \left \vert f\left(  t,y\right)
-f\left(  x,y\right)  \right \vert ;x,y\right) \\
\\
\leq K_{n_{1},n_{2}}^{\left(  \mathbf{\alpha},\mathbf{\beta},\mathbf{k}%
\right)  }\left(  \omega^{\left(  1\right)  }\left(  f;\left \vert
t-x\right \vert \right)  ;x,y\right)  +K_{n_{1},n_{2}}^{\left(  \mathbf{\alpha
},\mathbf{\beta},\mathbf{k}\right)  }\left(  \omega^{\left(  2\right)
}\left(  f;\left \vert s-y\right \vert \right)  ;x,y\right) \\
\\
\leq \omega^{\left(  1\right)  }\left(  f;\delta_{n_{1}}\right)  \left(
1+\frac{1}{\delta_{n_{1}}}K_{n_{1}}^{\left(  \alpha_{1},\beta_{1}%
,k_{1}\right)  }\left(  \left \vert t-x\right \vert ;x\right)  \right) \\
\\
+\omega^{\left(  2\right)  }\left(  f;\delta_{n_{2}}\right)  \left(
1+\frac{1}{\delta_{n_{2}}}K_{n_{2}}^{\left(  \alpha_{2},\beta_{2}%
,k_{2}\right)  }\left(  \left \vert s-y\right \vert ;y\right)  \right) \\
\\
\leq \omega^{\left(  1\right)  }\left(  f;\delta_{n_{1}}\right)  \left(
1+\frac{1}{\delta_{n_{1}}}\left(  K_{n_{1}}^{\left(  \alpha_{1},\beta
_{1},k_{1}\right)  }\left(  \left(  t-x\right)  ^{2};x\right)  \right)
^{1/2}\right) \\
\\
+\omega^{\left(  2\right)  }\left(  f;\delta_{n_{2}}\right)  \left(
1+\frac{1}{\delta_{n_{2}}}\left(  K_{n_{2}}^{\left(  \alpha_{2},\beta
_{2},k_{2}\right)  }\left(  \left(  s-y\right)  ^{2};y\right)  \right)
^{1/2}\right)
\end{array}
\]
From Lemma \ref{m},
\begin{align*}
\left \vert K_{n_{1},n_{2}}^{\left(  \mathbf{\alpha},\mathbf{\beta}%
,\mathbf{k}\right)  }\left(  f;x,y\right)  -f\left(  x,y\right)  \right \vert
&  \leq \omega^{\left(  1\right)  }\left(  f;\delta_{n_{1}}\right)  \left(
1+\frac{1}{\delta_{n_{1}}}\left(  \xi_{n_{1},k_{1}}^{\left(  \alpha_{1}%
,\beta_{1}\right)  }\right)  ^{1/2}\right) \\
&  +\omega^{\left(  2\right)  }\left(  f;\delta_{n_{2}}\right)  \left(
1+\frac{1}{\delta_{n_{2}}}\left(  \xi_{n_{2},k_{2}}^{\left(  \alpha_{2}%
,\beta_{2}\right)  }\right)  ^{1/2}\right)
\end{align*}
Taking $\delta_{n_{1}}=\sqrt{\xi_{n_{1},k_{1}}^{\left(  \alpha_{1},\beta
_{1}\right)  }}$ and $\delta_{n_{2}}=\sqrt{\xi_{n_{2},k_{2}}^{\left(
\alpha_{2},\beta_{2}\right)  }}$, we complete the proof.
\end{proof}

We continue by recalling the definition of the Lipschitz class for bivariate
function of $f.\ $It is known that a function $f$ belongs to $Lip_{M}\left(
\gamma_{1},\gamma_{2}\right)  $ if it satisfies
\[
\left \vert f\left(  t,s\right)  -f\left(  x,y\right)  \right \vert \leq
M\left \vert t-x\right \vert ^{\gamma_{1}}\left \vert s-y\right \vert ^{\gamma
_{2}},
\]
where $M>0,\ 0<\gamma_{1}\leq1,\ $ $0<\gamma_{2}\leq1.\ $We are in a position
to prove of the rate of convergence for the bivariate operators by virtue of
the Lipschitz class.

\begin{theorem}
\label{x} Let $M>0,\ 0<\gamma_{1}\leq1,\ 0<\gamma_{2}\leq1\ $and $f\in Lip_{M}\left(
\gamma_{1},\gamma_{2}\right)  .\ $Then%
\[
\left \Vert K_{n_{1},n_{2}}^{\left(  \mathbf{\alpha},\mathbf{\beta}%
,\mathbf{k}\right)  }\left(  f\right)  -f\right \Vert \leq M\left(
\lambda_{n_{1},k_{2}}^{\left(  \alpha_{1},\beta_{1}\right)  }\right)
^{\frac{\gamma_{1}}{2}}\left(  \lambda_{n_{2},k_{2}}^{\left(  \alpha_{2}%
,\beta_{2}\right)  }\right)  ^{\frac{\gamma_{2}}{2}},
\]
where $\lambda_{n_{1},k_{1}}^{\left(  \alpha_{1},\beta_{1}\right)
}=\left \Vert K_{n_{1}}^{\left(  \alpha_{1},\beta_{1},k_{1}\right)  }\left(
\left(  t-.\right)  ^{2};.\right)  \right \Vert $and $\lambda_{n_{2},k_{2}%
}^{\left(  \alpha_{2},\beta_{2}\right)  }=\left \Vert K_{n_{2}}^{\left(
\alpha_{2},\beta_{2},k_{2}\right)  }\left(  \left(  s-.\right)  ^{2};.\right)
\right \Vert .$
\end{theorem}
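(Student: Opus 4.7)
The plan is to proceed in essentially three stages: reduce to a product of univariate quantities, apply a Hölder-type inequality to handle the fractional exponents $\gamma_1,\gamma_2$, and then take the supremum over $J^2$.

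First I would start from the Lipschitz hypothesis, which gives
\[
|f(t,s)-f(x,y)| \le M\,|t-x|^{\gamma_1}|s-y|^{\gamma_2}.
\]
Since $K_{n_1,n_2}^{(\mathbf{\alpha},\mathbf{\beta},\mathbf{k})}$ is positive and linear with $K_{n_1,n_2}^{(\mathbf{\alpha},\mathbf{\beta},\mathbf{k})}(e_{00};x,y)=1$ (Lemma~\ref{test bivariate}), I can write
\[
|K_{n_1,n_2}^{(\mathbf{\alpha},\mathbf{\beta},\mathbf{k})}(f;x,y)-f(x,y)|
\le M\,K_{n_1,n_2}^{(\mathbf{\alpha},\mathbf{\beta},\mathbf{k})}\!\left(|t-x|^{\gamma_1}|s-y|^{\gamma_2};x,y\right).
\]
The crucial structural observation is that the bivariate operator defined by (\ref{BivPol}) is a tensor product of the univariate operators $K_{n_1}^{(\alpha_1,\beta_1,k_1)}$ and $K_{n_2}^{(\alpha_2,\beta_2,k_2)}$, because both the kernel and the double integral factor across the two variables. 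Consequently, applied to the separable test function $|t-x|^{\gamma_1}|s-y|^{\gamma_2}$, the right-hand side factors as
\[
K_{n_1}^{(\alpha_1,\beta_1,k_1)}\!\left(|t-x|^{\gamma_1};x\right)\cdot K_{n_2}^{(\alpha_2,\beta_2,k_2)}\!\left(|s-y|^{\gamma_2};y\right).
\]

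Next I would handle the fractional powers. Each univariate operator is a positive linear functional of total mass $1$ (by Lemma~\ref{testkant}), so Hölder's inequality with exponents $p_i=2/\gamma_i$ and $q_i=2/(2-\gamma_i)$ yields
\[
K_{n_i}^{(\alpha_i,\beta_i,k_i)}\!\left(|\cdot -x_i|^{\gamma_i};x_i\right)
\le \bigl(K_{n_i}^{(\alpha_i,\beta_i,k_i)}((\cdot-x_i)^2;x_i)\bigr)^{\gamma_i/2}\bigl(K_{n_i}^{(\alpha_i,\beta_i,k_i)}(e_0;x_i)\bigr)^{1-\gamma_i/2},
\]
and the second factor is $1$. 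Combining these two inequalities and taking the supremum over $(x,y)\in J^2$ then produces
\[
\|K_{n_1,n_2}^{(\mathbf{\alpha},\mathbf{\beta},\mathbf{k})}(f)-f\|
\le M\bigl(\lambda_{n_1,k_1}^{(\alpha_1,\beta_1)}\bigr)^{\gamma_1/2}\bigl(\lambda_{n_2,k_2}^{(\alpha_2,\beta_2)}\bigr)^{\gamma_2/2},
\]
which is the claimed estimate.

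The argument is essentially routine once the two ideas are in place; there is no genuinely hard step. The only point that needs care is the tensor-product factorization: I would verify it explicitly by writing out the double sum/double integral in (\ref{BivPol}) with the separable integrand $|t-x|^{\gamma_1}|s-y|^{\gamma_2}$ and observing that the $m_1$-sum together with the $t$-integral collapses to $K_{n_1}^{(\alpha_1,\beta_1,k_1)}(|t-x|^{\gamma_1};x)$ and similarly for the $m_2$-index. After that, the Hölder step and the definition of $\lambda_{n_i,k_i}^{(\alpha_i,\beta_i)}$ give the result immediately.
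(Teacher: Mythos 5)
Your proposal is correct and follows essentially the same route as the paper's own proof: bound $|f(t,s)-f(x,y)|$ by the Lipschitz condition, use the tensor-product structure of $K_{n_{1},n_{2}}^{\left(\mathbf{\alpha},\mathbf{\beta},\mathbf{k}\right)}$ to factor the resulting expression into the two univariate operators, and then apply H\"older's inequality with exponents $2/\gamma_{i}$ and $2/(2-\gamma_{i})$ together with $K_{n_{i}}^{\left(\alpha_{i},\beta_{i},k_{i}\right)}\left(e_{0}\right)=1$ before taking the supremum. Your explicit attention to verifying the factorization is a sensible addition, but the argument itself matches the paper's.
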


\begin{proof}
Taking into account $f\in Lip_{M}\left(  \gamma_{1},\gamma_{2}\right)  $, this
allows us to write%
\begin{align*}
\left \vert K_{n_{1},n_{2}}^{\left(  \mathbf{\alpha},\mathbf{\beta}%
,\mathbf{k}\right)  }\left(  f;x,y\right)  -f\left(  x,y\right)  \right \vert
&  \leq K_{n_{1},n_{2}}^{\left(  \mathbf{\alpha},\mathbf{\beta},\mathbf{k}%
\right)  }\left(  \left \vert f\left(  t,s\right)  -f\left(  x,y\right)
\right \vert ;x,y\right) \\
&  \leq MK_{n_{1},n_{2}}^{\left(  \mathbf{\alpha},\mathbf{\beta}%
,\mathbf{k}\right)  }\left(  \left \vert t-x\right \vert ^{\gamma_{1}}\left \vert
s-y\right \vert ^{\gamma_{2}};x,y\right) \\
&  =MK_{n_{1}}^{\left(  \alpha_{1},\beta_{1},k_{1}\right)  }\left(  \left \vert
t-x\right \vert ^{\gamma_{1}};x\right)  K_{n_{2}}^{\left(  \alpha_{2},\beta
_{2},k_{2}\right)  }\left(  \left \vert s-y\right \vert ^{\gamma_{2}};y\right)
.
\end{align*}
Applying the H\"{o}lder's inequality, we easily get
\begin{align*}
\left \vert K_{n_{1},n_{2}}^{\left(  \mathbf{\alpha},\mathbf{\beta}%
,\mathbf{k}\right)  }\left(  f;x,y\right)  -f\left(  x,y\right)  \right \vert
&  \leq M\left(  K_{n_{1}}^{\left(  \alpha_{1},\beta_{1},k_{1}\right)
}\left(  \left(  t-x\right)  ^{2};x\right)  \right)  ^{\gamma_{1}/2}\left(
K_{n_{1}}^{\left(  \alpha_{1},\beta_{1},k_{1}\right)  }\left(  e_{0};x\right)
\right)  ^{\frac{2-\gamma_{1}}{2}}\\
&  \times \left(  K_{n_{2}}^{\left(  \alpha_{2},\beta_{2},k_{2}\right)
}\left(  \left(  s-y\right)  ^{2};y\right)  \right)  ^{\gamma_{2}/2}\left(
K_{n_{2}}^{\left(  \alpha_{2},\beta_{2},k_{2}\right)  }\left(  e_{0};y\right)
\right)  ^{\frac{2-\gamma_{2}}{2}}\\
&  \leq M\left(  \lambda_{n_{1},k_{1}}^{\left(  \alpha_{1},\beta_{1}\right)
}\right)  ^{\frac{\gamma_{1}}{2}}\left(  \lambda_{n_{2},k_{2}}^{\left(
\alpha_{2},\beta_{2}\right)  }\right)  ^{\frac{\gamma_{2}}{2}}.
\end{align*}

\end{proof}

\begin{theorem}
Let $n_{1},n_{2}\in%
\mathbb{N}
$,$\ k_{1},k_{2}\geq0.\ $If $f$ has partial derivatives $f_{x}$ and $f_{y}$ on
$J^{2},$ then for all $\left(  x,y\right)  \in J^{2},\ $we have%
\[
\left \Vert K_{n_{1},n_{2}}^{\left(  \mathbf{\alpha},\mathbf{\beta}%
,\mathbf{k}\right)  }\left(  f\right)  -f\right \Vert \leq \left \Vert
f_{x}\right \Vert _{C\left(  J^{2}\right)  }\left(  \lambda_{n_{1},k_{1}%
}^{\left(  \alpha_{1},\beta_{1}\right)  }\right)  ^{\frac{1}{2}}+\left \Vert
f_{y}\right \Vert _{C\left(  J^{2}\right)  }\left(  \lambda_{n_{2},k_{2}%
}^{\left(  \alpha_{2},\beta_{2}\right)  }\right)  ^{\frac{1}{2}},
\]
where $\lambda_{n_{1},k_{1}}^{\left(  \alpha_{1},\beta_{1}\right)  }$and
$\lambda_{n_{2},k_{2}}^{\left(  \alpha_{2},\beta_{2}\right)  }\ $are given as
in Theorem \ref{x}.
\end{theorem}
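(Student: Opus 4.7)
The plan is to bound $|f(t,s) - f(x,y)|$ pointwise by a first-order expression in the partial derivatives, apply the positive linear operator $K_{n_1,n_2}^{(\boldsymbol\alpha,\boldsymbol\beta,\mathbf{k})}$, and then use Cauchy--Schwarz to convert the resulting first absolute moments into square roots of the second central moments controlled by Lemma~\ref{m}.

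First I would split $f(t,s)-f(x,y) = [f(t,s)-f(x,s)] + [f(x,s)-f(x,y)]$ and apply the mean value theorem in each variable separately. Since $f_x$ and $f_y$ exist on $J^2$, this yields
\begin{equation*}
\bigl|f(t,s) - f(x,y)\bigr| \le \|f_x\|_{C(J^2)}\,|t-x| + \|f_y\|_{C(J^2)}\,|s-y|.
\end{equation*}
Applying $K_{n_1,n_2}^{(\boldsymbol\alpha,\boldsymbol\beta,\mathbf{k})}$ to both sides (it is a positive linear operator preserving constants by \eqref{bivtest1}) and using the tensor-product structure of \eqref{BivPol}, together with $K_{n_1,n_2}^{(\boldsymbol\alpha,\boldsymbol\beta,\mathbf{k})}(e_{00};x,y)=1$, gives
\begin{equation*}
\bigl|K_{n_1,n_2}^{(\boldsymbol\alpha,\boldsymbol\beta,\mathbf{k})}(f;x,y)-f(x,y)\bigr|
 \le \|f_x\|_{C(J^2)}\,K_{n_1}^{(\alpha_1,\beta_1,k_1)}(|t-x|;x)
 + \|f_y\|_{C(J^2)}\,K_{n_2}^{(\alpha_2,\beta_2,k_2)}(|s-y|;y).
\end{equation*}

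Next I would apply the Cauchy--Schwarz inequality to each univariate operator: for instance,
\begin{equation*}
K_{n_1}^{(\alpha_1,\beta_1,k_1)}(|t-x|;x) \le \bigl(K_{n_1}^{(\alpha_1,\beta_1,k_1)}((t-x)^2;x)\bigr)^{1/2}\bigl(K_{n_1}^{(\alpha_1,\beta_1,k_1)}(e_0;x)\bigr)^{1/2},
\end{equation*}
and use \eqref{testkant1} so that the second factor is $1$. Taking the supremum over $(x,y)\in J^2$ and recalling the definition
$\lambda_{n_i,k_i}^{(\alpha_i,\beta_i)} = \|K_{n_i}^{(\alpha_i,\beta_i,k_i)}((\cdot-\cdot)^2;\cdot)\|$ from Theorem~\ref{x} yields the stated inequality.

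No serious obstacle is expected: the proof is a direct parallel to Theorem~\ref{x}, with the Lipschitz bound replaced by the mean value theorem estimate. The only small point to be careful about is justifying the split into partial-derivative contributions (which requires only the existence of $f_x,f_y$ and their boundedness via continuity on $J^2$), and correctly factoring the bivariate operator as a product of univariate operators so that the univariate second moment bounds from Lemma~\ref{m} can be invoked.
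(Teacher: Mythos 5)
Your proposal is correct and follows essentially the same route as the paper: the paper also splits the difference through the intermediate point $(x,s)$, writing $f(t,s)-f(x,y)=\int_x^t f_u(u,s)\,du+\int_y^s f_v(x,v)\,dv$ (the integral form of your mean value theorem step), bounds each piece by $\Vert f_x\Vert_{C(J^2)}\vert t-x\vert$ and $\Vert f_y\Vert_{C(J^2)}\vert s-y\vert$, applies the operator, and finishes with Cauchy--Schwarz and $K_{n_i}^{(\alpha_i,\beta_i,k_i)}(e_0;\cdot)=1$ to reach $(\lambda_{n_i,k_i}^{(\alpha_i,\beta_i)})^{1/2}$. The only cosmetic difference is that the final bound is phrased directly in terms of $\lambda_{n_i,k_i}^{(\alpha_i,\beta_i)}$ rather than via Lemma~\ref{m}.
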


\begin{proof}
Since $f\in C^{1}\left(  J^{2}\right)  ,$ we can write as follows%
\begin{equation}
f\left(  t,s\right)  -f\left(  x,y\right)  =%
{\displaystyle \int \limits_{x}^{t}}
f_{u}\left(  u,s\right)  du+%
{\displaystyle \int \limits_{y}^{s}}
f_{v}\left(  x,v\right)  dv.\label{int}%
\end{equation}
Taking $K_{n_{1},n_{2}}^{\left(  \mathbf{\alpha},\mathbf{\beta},\mathbf{k}%
\right)  }$on both sides of $\left(  \text{\ref{int}}\right)  $, we obtain%
\[
\left \vert K_{n_{1},n_{2}}^{\left(  \mathbf{\alpha},\mathbf{\beta}%
,\mathbf{k}\right)  }\left(  f;x,y\right)  -f\left(  x,y\right)  \right \vert
\leq K_{n_{1},n_{2}}^{\left(  \mathbf{\alpha},\mathbf{\beta},\mathbf{k}%
\right)  }\left(  \left \vert
{\displaystyle \int \limits_{x}^{t}}
f_{u}\left(  u,s\right)  du\right \vert ;x,y\right)  +K_{n_{1},n_{2}}^{\left(
\mathbf{\alpha},\mathbf{\beta},\mathbf{k}\right)  }\left(  \left \vert
{\displaystyle \int \limits_{y}^{s}}
f_{v}\left(  x,v\right)  dv\right \vert ;x,y\right)  .
\]
With the help of the inequalities which are given as follows
\[
\left \vert
{\displaystyle \int \limits_{x}^{t}}
f_{u}\left(  u,s\right)  du\right \vert \leq \left \Vert f_{x}\right \Vert
_{C\left(  J^{2}\right)  }\left \vert t-x\right \vert \text{ and }\left \vert
{\displaystyle \int \limits_{y}^{s}}
f_{v}\left(  x,v\right)  dv\right \vert \leq \left \Vert f_{y}\right \Vert
_{C\left(  J^{2}\right)  }\left \vert s-y\right \vert ,
\]
we easily reach%
\[
\left \vert K_{n_{1},n_{2}}^{\left(  \mathbf{\alpha},\mathbf{\beta}%
,\mathbf{k}\right)  }\left(  f;x,y\right)  -f\left(  x,y\right)  \right \vert
\leq \left \Vert f_{x}\right \Vert _{C\left(  J^{2}\right)  }K_{n_{1}}^{\left(
\alpha_{1},\beta_{1},k_{1}\right)  }\left(  \left \vert t-x\right \vert
;x\right)  +\left \Vert f_{y}\right \Vert _{C\left(  J^{2}\right)  }K_{n_{2}%
}^{\left(  \alpha_{2},\beta_{2},k_{2}\right)  }\left(  \left \vert
s-y\right \vert ;y\right)  .
\]
By considering Cauchy Schwarz inequality, from Corollary \ref{centkantbiv}, we can deduce the
desired result as
\begin{align*}
\left \vert K_{n_{1},n_{2}}^{\left(  \mathbf{\alpha},\mathbf{\beta}%
,\mathbf{k}\right)  }\left(  f;x,y\right)  -f\left(  x,y\right)  \right \vert
&  \leq \left \Vert f_{x}\right \Vert _{C\left(  J^{2}\right)  }\left(  K_{n_{1}%
}^{\left(  \alpha_{1},\beta_{1},k_{1}\right)  }\left(  \left(  t-x\right)
^{2};x\right)  \right)  ^{\frac{1}{2}}\left(  K_{n_{1}}^{\left(  \alpha
_{1},\beta_{1},k_{1}\right)  }\left(  e_{0};x\right)  \right)  ^{\frac{1}{2}%
}\\
&  +\left \Vert f_{y}\right \Vert _{C\left(  J^{2}\right)  }\left(  K_{n_{2}%
}^{\left(  \alpha_{2},\beta_{2},k_{2}\right)  }\left(  \left(  s-y\right)
^{2};y\right)  \right)  ^{\frac{1}{2}}\left(  K_{n_{2}}^{\left(  \alpha
_{2},\beta_{2},k_{2}\right)  }\left(  e_{0};y\right)  \right)  ^{\frac{1}{2}%
}\\
&  \leq \left \Vert f_{x}\right \Vert _{C\left(  J^{2}\right)  }\left(
\lambda_{n_{1},k_{1}}^{\left(  \alpha_{1},\beta_{1}\right)  }\right)
^{\frac{1}{2}}+\left \Vert f_{y}\right \Vert _{C\left(  J^{2}\right)  }\left(
\lambda_{n_{2},k_{2}}^{\left(  \alpha_{2},\beta_{2}\right)  }\right)
^{\frac{1}{2}}.
\end{align*}

\end{proof}

\begin{theorem}
\label{z} For $f\in C^{2}\left(  J^{2}\right)  ,\ $then
\begin{align*}
\lim_{n\rightarrow \infty}n\left(  K_{n,n}^{\left(  \mathbf{\alpha
},\mathbf{\beta},\mathbf{k}\right)  }\left(  f;x,y\right)  -f\left(
x,y\right)  \right)   &  =\left(  \alpha_{1}+\frac{1}{2}-\left(  \beta
_{1}+1\right)  x\right)  f_{x}\left(  x,y\right)  +\left(  \alpha_{2}+\frac
{1}{2}-\left(  \beta_{2}+1\right)  y\right)  f_{y}\left(  x,y\right) \\
&  +\frac{1}{2}\left(  k_{1}+1\right)  x\left(  1-x\right)  f_{xx}\left(
x,y\right)  +\frac{1}{2}\left(  k_{2}+1\right)  y\left(  1-y\right)
f_{yy}\left(  x,y\right)
\end{align*}
uniformly on $J^{2}.$
\end{theorem}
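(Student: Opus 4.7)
The plan is to mimic the proof of the one-dimensional Voronovskaja theorem (Theorem \ref{vo}), replacing the univariate Taylor expansion by its bivariate counterpart and using the bivariate central moment formulas from Corollary \ref{centkantbiv}. First I would expand $f$ around the fixed point $(x,y)$: since $f\in C^2(J^2)$, there is a continuous remainder function $\eta(t,s;x,y)$ with $\eta(x,y;x,y)=0$ such that
\begin{align*}
f(t,s)-f(x,y) &= f_x(x,y)(t-x)+f_y(x,y)(s-y) \\
&\quad +\tfrac12 f_{xx}(x,y)(t-x)^2+f_{xy}(x,y)(t-x)(s-y)+\tfrac12 f_{yy}(x,y)(s-y)^2 \\
&\quad +\eta(t,s;x,y)\bigl[(t-x)^2+(s-y)^2\bigr].
\end{align*}
Applying the (linear, positivity-preserving) operator $K_{n,n}^{(\mathbf{\alpha},\mathbf{\beta},\mathbf{k})}$ to both sides and multiplying by $n$ reduces the problem to computing six limits: those of $n$ times the first and second central moments in each coordinate, the mixed central moment, and the remainder term.

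The linear-moment limits follow directly from Corollary \ref{centkantbiv}: from (\ref{bivcent1})--(\ref{bivcent2}) one obtains
\[
\lim_{n\to\infty} n\,K_{n,n}^{(\mathbf{\alpha},\mathbf{\beta},\mathbf{k})}(e_{10}-x;x,y)=\alpha_1+\tfrac12-(\beta_1+1)x,
\]
and analogously for $e_{01}-y$; from (\ref{bivcent3})--(\ref{bivcent4}) one gets the limits $(k_i+1)\cdot(\text{appropriate variable})\cdot(1-\cdot)$ after a short computation. The key observation for the cross term is that, by the tensor-product structure of the operator,
\[
K_{n,n}^{(\mathbf{\alpha},\mathbf{\beta},\mathbf{k})}\bigl((t-x)(s-y);x,y\bigr)=K_{n}^{(\alpha_1,\beta_1,k_1)}(e_1-x;x)\cdot K_{n}^{(\alpha_2,\beta_2,k_2)}(e_1-y;y),
\]
each factor being $O(1/n)$ by (\ref{limkant1}); hence multiplication by $n$ still yields $O(1/n)\to 0$, which is exactly why the term $f_{xy}(x,y)$ does not appear in the final expression.

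The main obstacle is the remainder term, handled as in the univariate proof. By the Cauchy--Schwarz inequality,
\begin{align*}
&\bigl| n\,K_{n,n}^{(\mathbf{\alpha},\mathbf{\beta},\mathbf{k})}\bigl(\eta(t,s;x,y)\bigl[(t-x)^2+(s-y)^2\bigr];x,y\bigr)\bigr| \\
&\qquad\leq \bigl[K_{n,n}^{(\mathbf{\alpha},\mathbf{\beta},\mathbf{k})}(\eta^2;x,y)\bigr]^{1/2}\bigl[n^2 K_{n,n}^{(\mathbf{\alpha},\mathbf{\beta},\mathbf{k})}\bigl(((t-x)^2+(s-y)^2)^2;x,y\bigr)\bigr]^{1/2}.
\end{align*}
The second factor is bounded as $n\to\infty$: expanding the square gives fourth central moments plus cross products, and the univariate Corollary \ref{t} (estimate (\ref{limkant3})) together with the tensor-product identity shows that $n^2$ times each such term stays bounded. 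The first factor tends to $0$, because $\eta^2(\cdot,\cdot;x,y)$ is continuous on $J^2$ with value $0$ at $(x,y)$, and the uniform convergence $K_{n,n}^{(\mathbf{\alpha},\mathbf{\beta},\mathbf{k})}(g)\to g$ from Theorem \ref{w} (applied to $g=\eta^2(\cdot,\cdot;x,y)$) forces $K_{n,n}^{(\mathbf{\alpha},\mathbf{\beta},\mathbf{k})}(\eta^2;x,y)\to \eta^2(x,y;x,y)=0$. Combining the six limits yields exactly the right-hand side of (\ref{Vorokant}) in Theorem \ref{z}; uniformity on $J^2$ follows because every limit above is uniform in $(x,y)\in J^2$ (Theorem \ref{w} delivers the uniformity needed for the remainder, and the explicit moment expressions are polynomials in $x,y$).
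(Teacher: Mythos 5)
Your proposal is correct and follows essentially the same route as the paper: bivariate Taylor expansion about $(x,y)$, the moment limits from Corollary \ref{centkantbiv}, the tensor-product factorization to kill the $f_{xy}$ term, and Cauchy--Schwarz together with (\ref{limkant3}) and Theorem \ref{w} to dispose of the remainder. The only difference is cosmetic: you write the Peano remainder as $\eta\left[(t-x)^{2}+(s-y)^{2}\right]$ while the paper uses $\Omega\sqrt{(t-x)^{4}+(s-y)^{4}}$, which are comparable and lead to the same estimates.
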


\begin{proof}
Let $\left(  x,y\right)  \in J^{2}$ be arbitrary. In view of the Taylor's
series expansion of the function $f$ at the point $\left(  x,y\right)  ,$we
obtain%
\begin{align}
f\left(  t,s\right)   &  =f\left(  x,y\right)  +f_{x}\left(  x,y\right)
\left(  t-x\right)  +f_{y}\left(  x,y\right)  \left(  s-y\right) \nonumber \\
&  +\frac{1}{2}\left(  f_{xx}\left(  x,y)\right.  \left(  t-x\right)
^{2}+2f_{xy}\left(  x,y\right)  \left(  t-x\right)  \left(  s-y\right)
\right. \nonumber \\
&  +\left.  f_{yy}\left(  x,y\right)  \left(  s-y\right)  ^{2}\right)
+\Omega \left(  t,s;x,y\right)  \sqrt{\left(  t-x\right)  ^{4}+\left(
s-y\right)  ^{4}} \label{v}%
\end{align}
for $\left(  t,s\right)  \in J^{2}\ $where $\Omega \left(  t,s;x,y\right)  \in
C\left(  J^{2}\right)  $ and $\underset{\left(  t,s\right)  \rightarrow \left(
x,y\right)  }{\lim}\Omega \left(  t,s;x,y\right)  =0.$

Applying the operators $K_{n,n}^{\left(  \mathbf{\alpha},\mathbf{\beta
},\mathbf{k}\right)  }$ to the both sides of $\left(  \text{\ref{v}}\right)
$, it folows%
\begin{align}
&  \lim_{n\rightarrow \infty}n\left(  K_{n,n}^{\left(  \mathbf{\alpha
},\mathbf{\beta},\mathbf{k}\right)  }\left(  f;x,y\right)  -f\left(
x,y\right)  \right) \nonumber \\
&  =f_{x}\left(  x,y\right)  \lim_{n\rightarrow \infty}nK_{n}^{\left(
\alpha_{1},\beta_{1},k_{1}\right)  }\left(  \left(  t-x\right)  ;x\right)
+f_{y}\left(  x,y\right)  \lim_{n\rightarrow \infty}nK_{n}^{\left(  \alpha
_{2},\beta_{2},k_{2}\right)  }\left(  \left(  s-y\right)  ;y\right)
\nonumber \\
&  +\frac{1}{2}f_{xx}\left(  x,y\right)  \lim_{n\rightarrow \infty}%
nK_{n}^{\left(  \alpha_{1},\beta_{1},k_{1}\right)  }\left(  \left(
t-x\right)  ^{2};x\right)  +f_{xy}\left(  x,y\right)  \lim_{n\rightarrow
\infty}nK_{n,n}^{\left(  \mathbf{\alpha},\mathbf{\beta},\mathbf{k}\right)
}\left(  \left(  t-x\right)  \left(  s-y\right)  ;x,y\right) \nonumber \\
&  +\frac{1}{2}f_{yy}\left(  x,y\right)  \lim_{n\rightarrow \infty}%
nK_{n}^{\left(  \alpha_{2},\beta_{2},k_{2}\right)  }\left(  \left(
s-y\right)  ^{2};y\right) \nonumber \\
&  +\lim_{n\rightarrow \infty}nK_{n,n}^{\left(  \mathbf{\alpha},\mathbf{\beta
},\mathbf{k}\right)  }\left(  \Omega \left(  t,s;x,y\right)  \sqrt{\left(
t-x\right)  ^{4}+\left(  s-y\right)  ^{4}};x,y\right)  . \label{AA}%
\end{align}
Using the H\"{o}lder's inequality to the last term of the right side of the
equation $\left(  \text{\ref{AA}}\right)  $, we reach
\begin{align*}
&  \left \vert K_{n,n}^{\left(  \mathbf{\alpha},\mathbf{\beta},\mathbf{k}%
\right)  }\left(  \Omega \left(  t,s;x,y\right)  \sqrt{\left(  t-x\right)
^{4}+\left(  s-y\right)  ^{4}};x,y\right)  \right \vert \\
&  \leq \left(  K_{n,n}^{\left(  \mathbf{\alpha},\mathbf{\beta},\mathbf{k}%
\right)  }\left(  \Omega^{2}\left(  t,s;x,y\right)  ;x,y\right)  \right)
^{\frac{1}{2}}\left(  K_{n,n}^{\left(  \mathbf{\alpha},\mathbf{\beta
},\mathbf{k}\right)  }\left(  \left(  t-x\right)  ^{4}+\left(  s-y\right)
^{4};x,y\right)  \right)  ^{\frac{1}{2}}\\
&  \leq \left(  K_{n,n}^{\left(  \mathbf{\alpha},\mathbf{\beta},\mathbf{k}%
\right)  }\left(  \Omega^{2}\left(  t,s;x,y\right)  ;x,y\right)  \right)
^{\frac{1}{2}}\left(  K_{n}^{\left(  \alpha_{1},\beta_{1},k_{1}\right)
}\left(  \left(  t-x\right)  ^{4};x\right)  +K_{n}^{\left(  \alpha_{2}%
,\beta_{2},k_{2}\right)  }\left(  \left(  s-y\right)  ^{4};y\right)  \right)
^{\frac{1}{2}}.
\end{align*}
Since $K_{n,n}^{\left(  \mathbf{\alpha},\mathbf{\beta},\mathbf{k}\right)
}\left(  \Omega^{2}\left(  t,s;x,y\right)  ;x,y\right)  \rightarrow0$ as
$n\rightarrow \infty$ uniformly on $J^{2}$ from Theorem \ref{w}, by using limit which
is given by $\left(  \text{\ref{limkant3}}\right)  ,\ $we have%
\[
\lim_{n\rightarrow \infty}nK_{n,n}^{\left(  \mathbf{\alpha},\mathbf{\beta
},\mathbf{k}\right)  }\left(  \Omega \left(  t,s;x,y\right)  \sqrt{\left(
t-x\right)  ^{4}+\left(  s-y\right)  ^{4}};x,y\right)  =0.
\]
In view of Corollary \ref{t}, since
\[
\lim_{n\rightarrow \infty}nK_{n,n}^{\left(  \mathbf{\alpha},\mathbf{\beta
},\mathbf{k}\right)  }\left(  \left(  t-x\right)  \left(  s-y\right)
;x,y\right)  =\lim_{n\rightarrow \infty}nK_{n}^{\left(  \alpha_{1},\beta
_{1},k_{1}\right)  }\left(  t-x;x\right)  K_{n}^{\left(  \alpha_{2},\beta
_{2},k_{2}\right)  }\left(  s-y;y\right)  =0,
\]
we thus find that%
\begin{align*}
&  \lim_{n\rightarrow \infty}n\left(  K_{n,n}^{\left(  \mathbf{\alpha
},\mathbf{\beta},\mathbf{k}\right)  }\left(  f;x,y\right)  -f\left(
x,y\right)  \right) \\
&  =\left(  \alpha_{1}+\frac{1}{2}-\left(  \beta_{1}+1\right)  x\right)
f_{x}\left(  x,y\right)  +\left(  \alpha_{2}+\frac{1}{2}-\left(  \beta
_{2}+1\right)  y\right)  f_{y}\left(  x,y\right) \\
&  +\frac{1}{2}\left(  k_{1}+1\right)  x\left(  1-x\right)  f_{xx}\left(
x,y\right)  +\frac{1}{2}\left(  k_{2}+1\right)  y\left(  1-y\right)
f_{yy}\left(  x,y\right)  ,
\end{align*}
which completes the proof.
\end{proof}

\begin{remark}
The case $\alpha_{1}=\alpha_{2}=\beta_{1}=\beta_{2}=0,~k_{1}=k_{2}=1$ in
Theorem \ref{z} presents the Voronovskaja type theorem given for bivariate
operators $D_{n,n}^{\ast \left(  \frac{1}{n},\frac{1}{n}\right)  }$ by Agrawal
et al. \cite{AIK}.
\end{remark}

\begin{example}
Let $f\left(  x,y\right)  =2x^{2}y\cos \left(  \frac{5\pi x}{2}\right)
,\ n_{1}=n_{2}=10$ and $k_{1}=k_{2}=0.2\ $and $\alpha_{1}=\alpha_{2}=\beta
_{1}=\beta_{2}=0.\ $Convergence for the bivariate generalized operators
$K_{n_{1},n_{2}}^{\left(  \mathbf{\alpha},\mathbf{\beta},\mathbf{k}\right)  }$
(green) and the bivariate operators $D_{n_{1},n_{2}}^{\ast \left(  \frac
{1}{n_{1}},\frac{1}{n_{2}}\right)  }$ (yellow) which was given in \cite{AIK}
$\ $to the function $f$ (red) is demonstrated in Figure 7. It can be noted
that for $k_{1},k_{2}=0.2$, the approximation by the operators $K_{n_{1}%
,n_{2}}^{\left(  \mathbf{\alpha},\mathbf{\beta},\mathbf{k}\right)  }$ is
better than the operators $D_{n_{1},n_{2}}^{\ast \left(  \frac{1}{n_{1}}%
,\frac{1}{n_{2}}\right)  }$ to the function $f$.
\end{example}

\begin{example}
Consider $f\left(  x,y\right)  =2x\cos \left(  3\pi \left(  x+y\right)  \right)
,\ k_{1}=k_{2}=0.4\ $and $\alpha_{1}=\alpha_{2}=\beta_{1}=\beta_{2}=0.$ Figure
8 presents the approximation process of the bivariate operators $K_{n_{1}%
,n_{2}}^{\left(  \mathbf{\alpha},\mathbf{\beta},\mathbf{k}\right)  }$ to the
function $f\ $(red) for $n_{1}=n_{2}=10,\ 20,\ 40\ $(yellow, green, blue,
respectively). It is clearly seen that as the values of $n_{1},\ n_{2}$
increase,\ the approximation of the operators $K_{n_{1},n_{2}}^{\left(
\mathbf{\alpha},\mathbf{\beta},\mathbf{k}\right)  }\ $is getting better.
\end{example}

\begin{example}
Let $f\left(  x,y\right)  =7x^{5}\left(  x-\frac{1}{4}\right)  \sin \left(
2\pi y\right)  \ $(red)$.$ Figure 9 shows the convergence of $K_{n_{1},n_{2}%
}^{\left(  \mathbf{\alpha},\mathbf{\beta},\mathbf{k}\right)  }$ to the
function $f\ $for $n_{1}=n_{2}=10,\  \alpha_{1}=\alpha_{2}=\beta_{1}=\beta
_{2}=0\ $fixed and $k_{1},k_{2}=0.3,\ 0.9,\ 1.2$ (yellow, green, blue,
respectively).\ As the value of $k_{1},\ k_{2}$ decreases towards to
zero,\ the approximation of the bivariate operators $K_{n_{1},n_{2}}^{\left(
\mathbf{\alpha},\mathbf{\beta},\mathbf{k}\right)  }\ $is getting better.
\end{example}

\begin{figure}[pth]
	\includegraphics*[width=7cm]{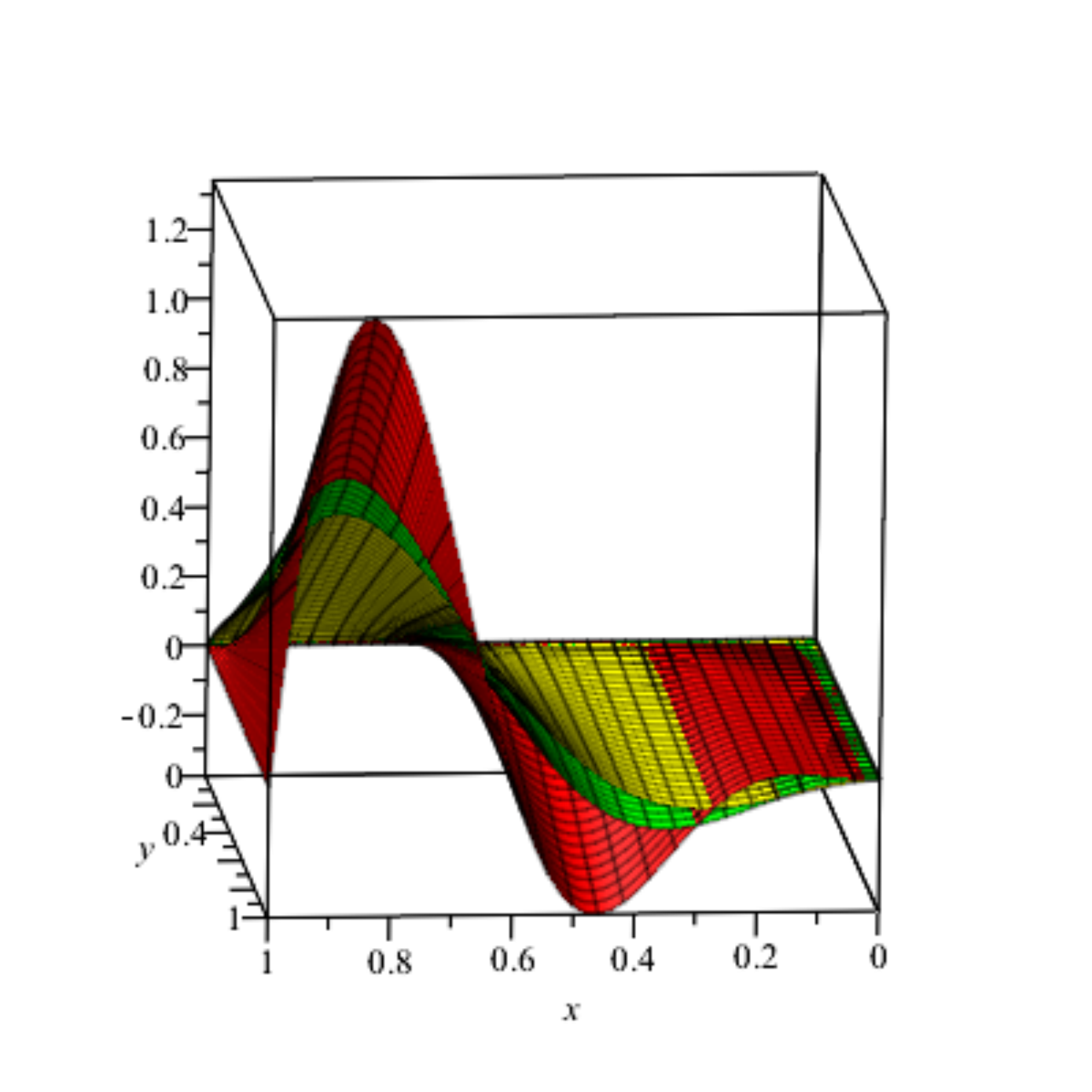}\caption{Convergence of
		${K}_{n_{1},n_{2}}^{\left(  \mathbf{\alpha},\mathbf{\beta
			},\mathbf{k}\right)  }$ and $D_{n_{1},n_{2}}^{\ast \left(  \frac{1}{n_{1}%
			},\frac{1}{n_{2}}\right)  }$ to the function $f$ }%
\end{figure}

\begin{figure}[pth]
	\includegraphics*[width=7cm]{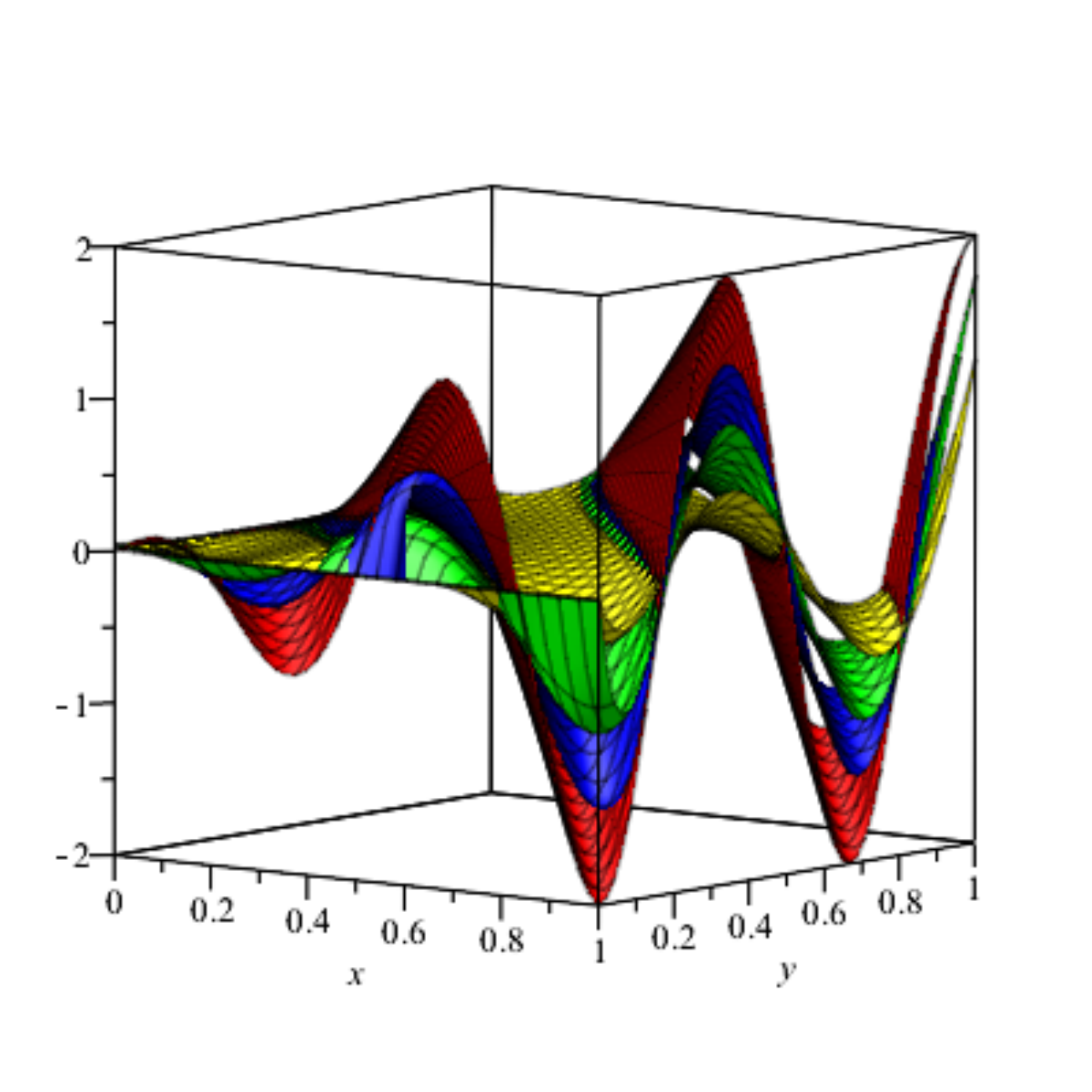}\caption{Approximation of the
		operators ${K}_{n_{1},n_{2}}^{\left(  \mathbf{\alpha},\mathbf{\beta
			},\mathbf{k}\right)  }$, for $n_{1}=n_{2}=10,\ 20,\ 40 $ }%
\end{figure}

\newpage

\begin{figure}[pth]
\includegraphics*[width=7cm]{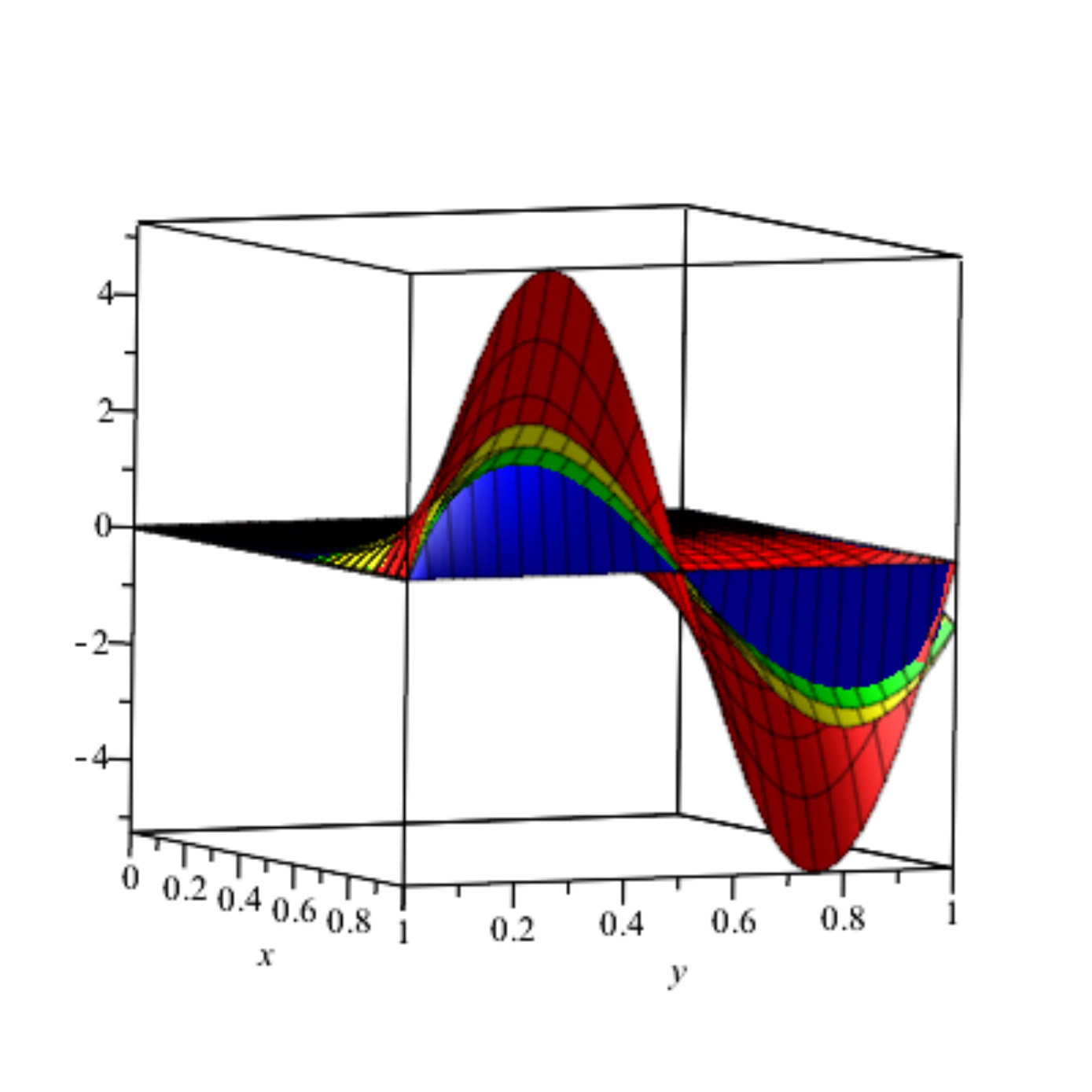}\caption{Approximation of the
operators ${K}_{n_{1},n_{2}}^{\left(  \mathbf{\alpha},\mathbf{\beta
},\mathbf{k}\right)  }$, for $k_{1},k_{2}=0.3,\ 0.9,\ 1.2$ }%
\end{figure}

\section*{Acknowledgements}

This work of the first, second and fourth authors was supported by Scientific
Research Projects Coordination Unit of K\i r\i kkale University. Project
number 2020/045.


\end{document}